\documentclass[11pt]{amsart}
\usepackage[margin=1in]{geometry}
\usepackage{amsmath,amssymb,amsthm}
\usepackage{xcolor}
\usepackage{longtable}
\usepackage[hidelinks]{hyperref}

\DeclareFontFamily{U}{wncy}{}
\DeclareFontShape{U}{wncy}{m}{n}{<->wncyr10}{}
\DeclareSymbolFont{mcy}{U}{wncy}{m}{n}
\DeclareMathSymbol{\Sha}{\mathord}{mcy}{"58}

\newcommand{\Q}{\mathbb{Q}}
\newcommand{\Z}{\mathbb{Z}}
\newcommand{\R}{\mathbb{R}}
\newcommand{\Rk}{\mathfrak{R}}
\newcommand{\tors}{\mathrm{tors}}
\DeclareMathOperator{\rank}{rank}
\DeclareMathOperator{\sqf}{sqf}
\DeclareMathOperator{\ord}{ord}
\DeclareMathOperator{\sign}{sign}
\DeclareMathOperator{\corank}{corank}
\DeclareMathOperator{\Sel}{Sel}
\DeclareMathOperator{\Gal}{Gal}

\theoremstyle{plain}
\newtheorem{theorem}{Theorem}[section]
\newtheorem{proposition}[theorem]{Proposition}
\newtheorem{lemma}[theorem]{Lemma}
\newtheorem{corollary}[theorem]{Corollary}
\newtheorem{conjecture}[theorem]{Conjecture}
\theoremstyle{definition}
\newtheorem{example}[theorem]{Example}
\newtheorem{remark}[theorem]{Remark}

\title{Elliptic curve rank growth in multi-quadratic extensions}
\author{Dylan T. Costa}
\address{University of Connecticut, Department of Mathematics, Storrs, CT 06269, USA}
\email{dylan.costa@uconn.edu}
\urladdr{https://dylan-costa.github.io/}
\author{\'Alvaro Lozano-Robledo}
\address{University of Connecticut, Department of Mathematics, Storrs, CT 06269, USA}
\email{alvaro.lozano-robledo@uconn.edu}
\urladdr{http://alozano.clas.uconn.edu}
\subjclass[2020]{Primary: 11G05, Secondary: 11G15, 14H52, 14K22.}

\begin{document}

\begin{abstract}
In this article we give an algorithm to produce infinitely many explicit quadratic twists of a given elliptic curve over $\Q$ with a rational point and fixed root number. Moreover, under the Parity Conjecture (or other technical hypotheses), for each $n\geq 1$, we produce explicit multi-quadratic extensions $L_n$ of $\Q$, of degree $2^n$, such that the growth in rank from $\Q$ to $L_n$ is at least $\max\{2n+2^{n-1}-1,\,2^n-1\}$ when the root number of $E$ is $-1$, and at least $\max\{2n,\,2^{n-1}+2n-2\}$ when it is $+1$. As a result, assuming Parity, we exhibit elliptic curves over $\Q$ and extensions of degree $2^n$ where the rank after base-extension is greater than or equal to $32+\max\{2n,\,2^{n-1}+2n-2\}$ or $30+\max\{2n+2^{n-1},\,2^n\}$.
\end{abstract}

\maketitle

\section{Introduction}

Let $K$ be a number field and let $E$ be an elliptic curve defined over $K$. The set of $K$-rational points on $E$ forms a finitely generated abelian group by the Mordell--Weil theorem. Thus, the structure theorem of finitely generated abelian groups provides a group isomorphism
\[E(K)\cong E(K)_\tors\times \Z^{R_{E/K}}\]
where $E(K)_\tors$ is the (finite) subgroup formed by all the elements of finite order and the non-negative integer $R_{E/K}=\rank E(K)\geq 0$ is the rank of $E(K)/E(K)_\tors$ as a $\Z$-module. By Merel's proof of the uniform boundedness conjecture (see \cite{merel}), if we fix $d\geq 1$, and we let $E/K$ be any elliptic curve over a number field $K$ of degree $[K:\Q]=d$, then $E(K)_\tors$ is, up to isomorphism, one of finitely many possibilities. The concrete lists of torsion subgroups that can occur for $d=[K:\Q]=1$, 2, 3, and 4 are known by work of Mazur (for $d=1$; see \cite{mazur}), Kamienny, Kenku, and Momose (for $d=2$; see \cite{kamienny,kenkumomose}), Derickx, Etropolski, van Hoeij, Morrow, and Zureick-Brown (for $d=3$; see \cite{DEvHMZB}, and \cite{jeonschweizer} for rational elliptic curves), and Derickx and Najman (for $d=4$; see \cite{derickxnajman}). In addition, Najman has claimed the classification of torsion subgroups for $d=5$ in a recent preprint (see \cite{najmanquintic}). The complete classification of possible torsion subgroups in degree $d>5$ is not known, though much work has been done in that direction (see \cite{GJNajman}, for instance). One can also ask what are the possible torsion subgroups for a specific number field $K$, and this is known in several cases, such as $K=\Q$ (due to Mazur), or $\Q(i)$, $\Q(\sqrt{-3})$ (due to Najman, \cite{najmancyclotomic}). For a classification result of torsion subgroups in the function field case, see McDonald's \cite{mcdonald}.

In contrast to the progress in the classification of torsion subgroups of elliptic curves, the classification of ranks over number fields continues to be wide open, even for $K=\Q$ (more is known over function fields, see for instance \cite{ulmer}, \cite{griffon}). For a fixed $d\geq 1$ and a number field $K$, we define two sets:
\begin{align*}
\Rk_K&=\{R_{E/K}: E \text{ is an elliptic curve defined over } K\}, \text{ and}\\
\Rk_d&=\{R_{E/L}: E \text{ is an elliptic curve defined over some } L \text{ with } [L:\Q]=d\}.
\end{align*}
Most basic questions about $\Rk_K$ and $\Rk_d$ are open. Are these sets bounded? If $\Rk_K$ is bounded for some field $K$, what is $\max(\Rk_K)$? Similarly, if $\Rk_d$ is bounded for some $d\geq 1$, what is $\max(\Rk_d)$? If, on the contrary, $\Rk_K$ is unbounded for some number field $K$ or $\Rk_d$ is unbounded for some $d\geq 1$, is $\Rk_K=\Z^{\geq 0}$, or is $\Rk_d=\Z^{\geq 0}$? If not, what natural numbers do not occur as the rank of an elliptic curve $E/F$? In the case of $K=\Q$, there are elliptic curves $E/\Q$ with $\rank E(\Q)\geq r$ for every $r\leq 32$ (see Section~\ref{sec-examples}), but the exact ranks of the record curves are only known conditionally: the curves of rank exactly $28$, $30$, $31$ and $32$ in Section~\ref{sec-examples} rely on the Generalized Riemann Hypothesis (GRH) and/or the Birch and Swinnerton-Dyer conjecture (BSD), and for the curves of Theorems~\ref{thm-E31} and \ref{thm-E29} only lower bounds are known. For the latest discoveries of ranks of elliptic curves, see Dujella's website \cite{dujella} and the ICARM database \cite{icarm}.

For a fixed abelian group $T=\Z/a\Z\times\Z/ab\Z$, for some $a,b\geq 1$, we can also ask similar questions about
\begin{align*}
\Rk^1_{K,T}&=\{R_{E/K}: E/K \text{ and } E(K)_\tors\cong T\}, \text{ and}\\
\Rk^1_{d,T}&=\{R_{E/L}: E/L \text{ with } [L:\Q]=d, \text{ and } E(L)_\tors\cong T\}.
\end{align*}
Similarly, we can define $\Rk^\infty_{d,T}$ to denote the sets of ranks $R\geq 1$ such that there are infinitely many non-isomorphic (over $\overline{\Q}$) elliptic curves $E/L$, with torsion isomorphic to $T$, and rank $R$, where $L$ runs over all fields with $[L:\Q]=d$.

In this paper we show the following results, where $\sup(H)$ denotes the supremum of the elements of $H$.

\begin{theorem}\label{thm-main1}
Let $T$ be one of the $15$ possible isomorphism types for a torsion subgroup of an elliptic curve over $\Q$, let $\ast\in\{1,\infty\}$, let $\Rk^\ast_{d,T}$ be the set defined above, and assume the Parity Conjecture for elliptic curves over $\Q$. Then, for any $n\geq 1$, we have
\begin{enumerate}
\item if $\sup(\Rk^\ast_{1,T})$ is odd, then $\sup(\Rk^\ast_{2^n,T})\geq \sup(\Rk^\ast_{1,T})+\max\{2n+2^{n-1}-1,2^n-1\}$, and
\item if $\sup(\Rk^\ast_{1,T})$ is even, then $\sup(\Rk^\ast_{2^n,T})\geq \sup(\Rk^\ast_{1,T})+\max\{2n,2^{n-1}+2n-2\}$.
\end{enumerate}
Moreover, the implicit extensions of degree $2^n$, respectively, can be explicitly constructed for each curve achieving the supremum over $\Q$.
\end{theorem}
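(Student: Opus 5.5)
The plan is to reduce everything to ranks of quadratic twists over $\Q$. For a multi-quadratic field $L=\Q(\sqrt{d_1},\dots,\sqrt{d_n})$ of degree $2^n$, the standard decomposition of $E(L)\otimes\Q$ under $\Gal(L/\Q)\cong(\Z/2\Z)^n$ gives
\[
R_{E/L}=\sum_{S\subseteq\{1,\dots,n\}} R_{E^{d_S}/\Q},\qquad d_S=\sqf\Bigl(\prod_{i\in S}d_i\Bigr),
\]
where $E^{d_\emptyset}=E$. Fix a curve $E$ achieving the supremum $r=\sup(\Rk^\ast_{1,T})$; if the supremum is infinite there is nothing to prove. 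It then suffices to choose the $d_i$ so that the $2^n-1$ nontrivial twists $E^{d_S}$ contribute the stated amount and $E(L)_\tors\cong T$. For the torsion condition, torsion grows over quadratic (and hence over multi-quadratic) extensions only for finitely many square classes $d$, namely those coming from the twists $E^d$ with nontrivial torsion, such as fields generated by coordinates of $2$-torsion or by $\sqrt{-3}$ and $\sqrt{-1}$ in CM-type situations. So I will require every $d_S$ to avoid this finite set. This is possible because the $d_i$ will come from an infinite supply, and I can choose them one at a time avoiding finitely many classes. For $\ast=\infty$, I apply the same construction to each of the infinitely many curves.

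Two inputs control the ranks. First, for $d$ coprime to $N=N_E$, the root number satisfies $w(E^d)=w(E)\chi_d(-N)$, and this is multiplicative in $d$. Under Parity, root number $-1$ forces rank $\geq1$. Second, the algorithm announced in the abstract (stated earlier in the paper) produces infinitely many explicit $d$ such that $E^d(\Q)$ has a non-torsion point and $\chi_d(-N)$ takes a prescribed sign. Under Parity, such a twist with root number $+1$ has rank $\geq2$. I also need the $d_i$ to be multiplicatively independent modulo squares, which again is a matter of avoiding finitely many classes at each step.

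\emph{Odd case} ($w(E)=-1$). For the bound $2^n-1$, I choose all $d_i$ with $\chi_{d_i}(-N)=+1$. Then every $E^{d_S}$ has root number $-1$, so rank $\geq1$. For the bound $2n+2^{n-1}-1$, I use the algorithm to choose the $d_i$ with $\chi_{d_i}(-N)=-1$ and with $E^{d_i}(\Q)$ containing a point of infinite order. Then $w(E^{d_S})=-(-1)^{|S|}$. The $2^{n-1}-1$ nonempty even subsets give rank $\geq1$. The $n$ singletons have root number $+1$ and a point, so each has rank $\geq2$. The total is $2n+2^{n-1}-1$.

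\emph{Even case} ($w(E)=+1$). For the bound $2n$, I take all $d_i$ with $\chi_{d_i}(-N)=+1$ and a point on $E^{d_i}$, so each singleton twist has rank $\geq2$. For the bound $2^{n-1}+2n-2$, I take $d_1,\dots,d_{n-1}$ with $\chi=+1$ and a point on each $E^{d_i}$, giving $2(n-1)$, and $d_n$ with $\chi_{d_n}(-N)=-1$. Then the $2^{n-1}$ subsets containing $n$ have root number $-1$ and contribute at least $2^{n-1}$.

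The main obstacle is the input that each $d_i$ can be taken with a rational point on $E^{d_i}$ and a prescribed value of $\chi_{d_i}(-N)$, while avoiding the finitely many bad square classes. My plan is to use the earlier algorithm in the following form. Take $x$ and set $d=\sqf(f(x))$ for the cubic $f$ defining $E$, restricting $x$ to congruence classes modulo $4N$ so that $d$ is coprime to $N$ and $\chi_d(-N)$ has the required sign. Then I argue that infinitely many distinct square classes arise, so that the finitely many exclusions can be made. Explicitness of $L_n$ follows directly from this construction.
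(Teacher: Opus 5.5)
Your rank bookkeeping is correct and is the paper's own: the four sign patterns you choose are exactly those of Lemma~\ref{lem:multiquadratic-rank}(2), (3), (4)(a) and (4)(c), with the same counts. The gap is in the torsion step. You assert that torsion can grow over a multi-quadratic field only if it already grows over a quadratic subfield. You then conclude that it suffices for every $d_S$ to avoid the finitely many classes $d$ with $E(\Q(\sqrt d))_{\tors}\neq E(\Q)_{\tors}$. That implication is false, and the paper's Example~\ref{ex-torsiongrowth} is a counterexample. For $E\colon y^2=x(x-2)(x+3)$ there is no torsion growth over any quadratic field, yet $E(\Q(\sqrt2,\sqrt5))\cong\Z/2\Z\times\Z/4\Z$. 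The reason is that halving $(2,0)$ requires both $2-0=2$ and $2-(-3)=5$ to be squares. This happens in the biquadratic field but in none of its three quadratic subfields. Membership in $\Rk^\ast_{2^n,T}$ requires $E(L_n)_{\tors}\cong T$ exactly, so your argument does not establish the theorem as stated.

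The missing input is the Laska--Lorenz/Fujita classification (Theorem~\ref{thm-fujita}), which shows that $E(\Q(2))_{\tors}$ is finite and lies in $E[5040]$. Hence $F_E=\Q(E[5040])\cap\Q(2)$ is a \emph{finite} multi-quadratic field, unramified outside $210N_E$. If $L\cap F_E=\Q$, then every $P\in E(L)_{\tors}$ satisfies $\Q(P)\subseteq L\cap\Q(E[5040])=\Q$ (Corollary~\ref{cor-fujita}). Your mechanism of avoiding a finite set of square classes at every $d_S$ then does work. However, the set to avoid is the set of nontrivial square classes of the quadratic subfields of $F_E$, which contains $2$ and $5$ in the example above. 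It is not the set of $d$ for which $\Q(\sqrt d)$ alone sees growth. In practice it suffices that every $d_S$ has a prime factor not dividing $210N_E$. This is how Theorem~\ref{maintheorem}(4) and Remark~\ref{rem-factoringfree} arrange it. For fixed $n$, Merel's bound over fields of degree $2^n$ would also produce such a finite field. With this replacement, or simply by citing Theorem~\ref{maintheorem}(4) together with parts (1)--(3), your proof becomes the paper's.
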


Our results are based on the following construction, and the key here is that the sequence $\{d_k\}_{k\geq 1}$ can be constructed explicitly, as we will discuss in examples.

\begin{theorem}\label{thm-main2}
Let $E/\Q$ be an elliptic curve with conductor $N_E$, global root number $W(E/\Q)$, and rank $R_{E/\Q}$. Fix a sequence of signs $\varepsilon_k\in\{\pm1\}$, $k\geq1$. Then, there exists an explicitly constructible sequence of square-free integers $\{d_k\}_{k\geq1}$ such that $L_n=\Q(\sqrt{d_1},\ldots,\sqrt{d_n})$ has degree $2^n$ over $\Q$, $R_{E^{d_k}/\Q}\geq 1$ and $W(E^{d_k})=\varepsilon_k$ for every $k=1,\ldots,n$. Moreover,
\begin{enumerate}
\item $R_{E/L_n}\geq R_{E/\Q}+n$.
\item If $W(E)=-1$ and $W(E^{d_k})=1$ for every $k$, and the Parity Conjecture holds for $E^d$ for every $d\in\mathcal{D}_n=\langle d_1,\ldots,d_n\rangle\subseteq\Q^\times/(\Q^\times)^2$ with $d\neq 1$, then
\[R_{E/L_n}\geq R_{E/\Q}+2n+2^{n-1}-1.\]
\item If $W(E)=-1$ and $W(E^{d_k})=-1$ for every $k$, then $W(E^d)=-1$ for every $d\in\mathcal{D}_n$, and either
\begin{enumerate}
\item $\ord_{s=1}L(E^d,s)=1$ for every $d\in\mathcal{D}_n$ with $d\neq1$, in which case
\[R_{E/L_n}=R_{E/\Q}+2^n-1,\]
or
\item $\ord_{s=1}L(E^d,s)>1$ for some $d\in\mathcal{D}_n$ with $d\neq 1$. In this case, if the Parity Conjecture holds for $E^d$ for every $d\in\mathcal{D}_n$ with $d\neq1$, or if for every such $d$ there is a prime $p$ such that $\Sha(E^d/\Q)[p^\infty]$ is finite, then
\[R_{E/L_n}\geq R_{E/\Q}+2^n-1.\]
If BSD holds, then in fact $R_{E/L_n}\geq R_{E/\Q}+2^n+1$.
\end{enumerate}
\item Suppose $W(E)=1$ and assume the Parity Conjecture holds for $E^d$ for every $d\in\mathcal{D}_n$ with $d\neq 1$, or that for every such $d$ there is a prime $p$ such that $\Sha(E^d/\Q)[p^\infty]$ is finite. Then
\begin{enumerate}
\item if $W(E^{d_k})=1$ for every $k$, we have $R_{E/L_n}\geq R_{E/\Q}+2n$,
\item if $W(E^{d_k})=-1$ for every $k$, we have $R_{E/L_n}\geq R_{E/\Q}+2^{n-1}$, and
\item if $n\geq2$, $W(E^{d_k})=1$ for $1\leq k\leq n-1$ and $W(E^{d_n})=-1$, we have $R_{E/L_n}\geq R_{E/\Q}+2^{n-1}+2n-2$.
\end{enumerate}
\end{enumerate}
\end{theorem}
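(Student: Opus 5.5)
The plan is to reduce everything to ranks of quadratic twists over $\Q$, control root numbers of all twists in $\mathcal{D}_n$ through one character, and then count twists that are forced to have positive (or at least $2$) rank. The basic identity is that for $L_n=\Q(\sqrt{d_1},\ldots,\sqrt{d_n})$ of degree $2^n$,
\[R_{E/L_n}=\sum_{d\in\mathcal{D}_n} R_{E^d/\Q},\]
which follows from the decomposition of $E(L_n)\otimes\Q$ under the characters of $\Gal(L_n/\Q)\cong(\Z/2)^n$. The term $d=1$ contributes $R_{E/\Q}$, so every claim amounts to lower bounds (or exact values) for $R_{E^d/\Q}$ with $d\neq1$.

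\textbf{Construction of the $d_k$.} Write $E:y^2=f(x)$ with $f$ a monic cubic. For $x_0\in\Q$ with $f(x_0)\neq0$, set $d=\sqf(f(x_0))$. Then $(x_0,\sqrt{f(x_0)/d})$ gives a point on $E^d:dy^2=f(x)$ of infinite order for all but finitely many $x_0$, since twists have uniformly bounded torsion and only finitely many $x_0$ can give torsion points. To control the root number, impose congruence conditions on $x_0$ modulo a power of $4N_E$ (or use the local root number formulas at the bad primes if $f(x_0)$ cannot be kept prime to $N_E$). We want $\gcd(d,N_E)=1$ and $d$ in a prescribed class, so that
\[W(E^d)=W(E)\,\chi_d(-N_E)\]
takes the desired value $\varepsilon_k$.

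\textbf{Independence of the $d_k$.} To make $d_1,\ldots,d_n$ independent in $\Q^\times/(\Q^\times)^2$, choose $d_k$ inductively so that it is divisible by a prime $q_k$ not dividing $N_E d_1\cdots d_{k-1}$. Concretely, pick $q_k$ such that $f$ has a simple root mod $q_k$ (there are infinitely many such $q_k$ by Chebotarev). By Hensel and CRT, choose $x_0$ with $q_k\parallel f(x_0)$ together with the congruences mod $N_E$. This step is the main obstacle: compatibility of the congruence conditions, coprimality to $N_E$, and the non-torsion requirement must be checked simultaneously. It is where explicitness and the examples enter. Once the $d_k$ are built, part (1) is immediate, since each $d_k$ contributes at least $1$.

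\textbf{Root number bookkeeping.} Because $\chi_{dd'}=\chi_d\chi_{d'}$, the map $\psi(d)=W(E)W(E^d)=\chi_d(-N_E)$ is a character on $\mathcal{D}_n$. It is determined by its values $\psi(d_k)=W(E)\varepsilon_k$. Under Parity, or under $p$-parity via finiteness of some $\Sha(E^d/\Q)[p^\infty]$ (Dokchitser--Dokchitser), the rank of $E^d$ has parity given by $W(E^d)$. Hence every $d$ with $W(E^d)=-1$ gives rank at least $1$, and each generator $d_k$ with $W(E^{d_k})=+1$ has rank at least $1$, hence at least $2$.

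Counting then gives each case:
\begin{enumerate}
\item In (2), $\psi(d_k)=-1$ for all $k$, so $W(E^d)=(-1)^{\ell(d)+1}$, where $\ell(d)$ is the number of generators used in $d$. The $n$ generators give at least $2$ each, and the $2^{n-1}-1$ nontrivial $d$ of even length give at least $1$ each, for a total of $2n+2^{n-1}-1$.
\item In (3), $\psi$ is trivial, so all $W(E^d)=-1$. In case (a), Gross--Zagier--Kolyvagin gives rank exactly $1$ for each $d\neq1$, hence the equality. In case (b), parity gives at least $1$ each. Under BSD, some $d$ has odd analytic rank at least $3$, giving the extra $+2$.
\item In (4)(a), $\psi$ is trivial and each generator gives at least $2$.
\item In (4)(b), the $2^{n-1}$ elements of odd length have root number $-1$.
\item In (4)(c), the $2^{n-1}$ elements involving $d_n$ have root number $-1$, and $d_1,\ldots,d_{n-1}$ give at least $2$ each. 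This totals $2^{n-1}+2(n-1)$.
\end{enumerate}
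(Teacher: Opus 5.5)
Your root-number bookkeeping and counting are correct and coincide with the paper's Lemma~\ref{lem:multiquadratic-rank}. The gap is in the construction of the $d_k$, which is the part of the theorem that goes beyond that lemma. You flag it yourself as ``the main obstacle'' and then leave it open. You never show that there is an $x_0\in\Q$ for which $d=\sqf(f(x_0))$ gives a field $\Q(\sqrt d)$ of discriminant prime to $N_E$ \emph{and} the prescribed value of $\chi_d(-N_E)$.

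Saying ``impose congruence conditions on $x_0$ modulo a power of $4N_E$'' presupposes that at each $p\mid 2N_E$ there is a $p$-adic open set of $x_0$ on which $f(x_0)$ has even valuation and a suitable unit class. For integral $x_0$ this is not automatic: if $A$ is odd and $B$ is even, $f(x)$ is even for every integer $x$, whereas for $2\mid N_E$ you need $d$ odd with $d\equiv 1\bmod 4$.

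Your fallback, allowing $p\mid d$ for some $p\mid N_E$ and computing local root numbers directly, does not rescue the argument. The fact that $\psi(d)=W(E)W(E^d)$ is a character on $\mathcal{D}_n$ comes from $W(E^d)=W(E)\chi_d(-N_E)$. That formula requires every element of $\mathcal{D}_n$ to have discriminant prime to $N_E$. Without it, the root-number patterns you count in (2)--(4) are not justified; compare the twists by $62$ and $123$ in Example~\ref{ex-x3minus2-intro}, to which the paper's theorem does not apply.

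The missing idea is to take $x_0$ $p$-adically close to the origin of $E$ at every bad prime, as in Lemma~\ref{findgoodvaluelemma}. Let $N$ be the product of the primes dividing $2N_E$, and put $x_0=x_1/N^2$ with $x_1\equiv1$ modulo $8$ and modulo every odd $p\mid N$. (The paper uses $x_1/(qN)^2$; the extra factor $q$ serves its own independence argument.) Then $N^6f(x_0)=x_1^3+N^4Ax_1+N^6B\equiv x_1^3$ modulo $8$ and modulo each odd $p\mid N$. Hence $d\equiv1\bmod 8$ and $\left(\frac{d}{p}\right)=1$ for all $p\mid N_E$, so $W(E^d)=W(E)\sign(d)$. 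The sign is then fixed by taking $x_1$ of the desired sign with $|x_1|\geq(|A|+|B|+1)N^2$.

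Once this is in place, the rest of your construction is sound and differs harmlessly from the paper's. For independence, you force a fresh odd prime $q_k\nmid N d_1\cdots d_{k-1}$ to divide $d_k$ exactly once, using Hensel at a simple root of $X^3+N^4AX+N^6B$ modulo $q_k$ combined with the conditions above by CRT. This replaces the paper's non-residue conditions modulo the $q_j$. For non-torsion, the finiteness of the set of $x$-coordinates of points of order at most $12$ on $E$ replaces the paper's appeal to Najman's bound on twists with extra torsion.
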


When we apply our results to the curves $E_{32}$ and $E_{31}'$ of Section~\ref{sec-examples}, we obtain the following corollary.
\begin{corollary}\label{cor-highrank}
Let $n\geq 1$. There exist explicit multi-quadratic number fields $L_n$ and $L_n'$ of degree $2^n$ such that:
\begin{enumerate}
\item $R_{E_{32}/L_n}\geq 32+n$, and, if the Parity Conjecture holds for the twists $E_{32}^d$ with $d\in\mathcal{D}_n$, $d\neq 1$, then $R_{E_{32}/L_n}\geq 32+\max\{2n,2^{n-1}+2n-2\}$;
\item $R_{E_{31}'/L_n'}\geq 31+n$, and, if the Parity Conjecture holds for the twists $E_{31}'^{\,d}$ with $d\in\mathcal{D}_n$, $d\neq 1$, then $R_{E_{31}'/L_n'}\geq 30+\max\{2n+2^{n-1},2^n\}$.
\end{enumerate}
In both cases, the torsion subgroup does not grow from $\Q$ to $L_n$ (resp.\ $L_n'$).
\end{corollary}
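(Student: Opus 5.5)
The plan is to derive the corollary directly from Theorem~\ref{thm-main2}, applied to $E=E_{32}$ and $E=E_{31}'$ with suitable sign choices $\varepsilon_k$. The extra ingredients are the root numbers of these two curves, which are computed in Section~\ref{sec-examples}, and a separate argument that torsion does not grow.

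First, the root numbers. I would check that $W(E_{32})=+1$ and $W(E_{31}')=-1$. If the exact ranks are only conditional, the root number can still be computed unconditionally from the local root numbers at the primes dividing the conductor. The parity of the proven lower bounds ($32$ even, $31$ odd) is consistent with these values.

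\textbf{Case $E_{32}$.} Part (1) of Theorem~\ref{thm-main2} holds with any signs and gives $R_{E_{32}/L_n}\geq R_{E_{32}/\Q}+n\geq 32+n$. For the conditional bound, use part (4), where $W(E)=1$:
\begin{itemize}
\item[] if $2n\ge 2^{n-1}+2n-2$, which happens exactly when $n\le 2$, choose all $\varepsilon_k=1$ and apply (4)(a) to get $32+2n$;
\item[] otherwise, for $n\ge 3$, choose $\varepsilon_k=1$ for $k<n$ and $\varepsilon_n=-1$, and apply (4)(c) to get $32+2^{n-1}+2n-2$.
\end{itemize}
For $n=1$ the two quantities coincide, so either choice works.

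\textbf{Case $E_{31}'$.} Part (1) gives $31+n$. For the conditional bound, use parts (2) and (3), where $W(E)=-1$:
\begin{itemize}
\item[] with all $\varepsilon_k=1$, part (2) gives $R\ge 31+2n+2^{n-1}-1=30+2n+2^{n-1}$;
\item[] with all $\varepsilon_k=-1$, part (3) gives $R\ge 31+2^n-1=30+2^n$.
\end{itemize}
In case (3)(a) this bound holds unconditionally. In case (3)(b) it requires Parity for the twists, which is exactly the hypothesis of the corollary. Choosing whichever option realizes the maximum gives $30+\max\{2n+2^{n-1},2^n\}$. The fields $L_n, L_n'$ are explicit because the $d_k$ are explicitly constructible.

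\textbf{Torsion.} This is the step I expect to be the real obstacle, since Theorem~\ref{thm-main2} says nothing about torsion. For a multi-quadratic field $L$, every torsion point of $E(L)$ is controlled by the quadratic subfields $\Q(\sqrt d)$, and
$E(\Q(\sqrt d))_{\text{odd}}\cong E(\Q)_{\text{odd}}\oplus E^d(\Q)_{\text{odd}}$. So odd torsion could grow only if some twist $E^d$, $d\in\mathcal{D}_n$, had a rational point of odd order. The $2$-power torsion grows only if the $2$-division field or the field of definition of $4$-torsion lies in $L$.

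My plan is to use the known torsion of $E_{32}$ and $E_{31}'$, presumably trivial or small, as listed in Section~\ref{sec-examples}:
\begin{itemize}
\item[] If $E[2](\overline{\Q})$ has no rational points and the cubic has $S_3$ Galois group, it cannot become rational over a $2$-extension, since that would need the cubic field to embed in an abelian $2$-extension.
\item[] If $E$ has an odd-order rational point and $E^d$ had one too, then $E$ would acquire full $p$-torsion over a multi-quadratic field. This contradicts the Weil pairing requiring $\mu_p\subset L$ when $p\ge 5$. The case $p=3$ needs extra care with $d=-3$, which I would handle by ensuring $-3\notin\mathcal{D}_n$, or by the isogeny class.
\item[] If no odd torsion exists over $\Q$, I would use reduction mod small primes of good reduction that split completely in $L_n$, or that are chosen via the construction of the $d_k$. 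This bounds $\#E(L_n)_\tors$ by $\gcd$ of $\#\tilde E(\mathbb F_{p})$ over such $p$, and also bounds the twists' torsion.
\end{itemize}
The most robust route is probably the known classification of torsion over multi-quadratic fields, for instance Fujita's result over $\Q(\sqrt{-1},\sqrt2,\ldots)$-type fields or the general $\Q(2^\infty)$ results. It shows torsion over $\Q(2^\infty)$ is determined by finitely many checks, and I would verify these checks concretely for the two curves.
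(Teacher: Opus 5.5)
Your rank argument is the paper's own: $W(E_{32})=+1$ and $W(E_{31}')=-1$, Theorem~\ref{thm-main2}(1) for the unconditional bounds, and parts (4)(a)/(4)(c), respectively (2)/(3), for the conditional ones, taking whichever sign pattern gives the larger bound. One small slip: $2n$ and $2^{n-1}+2n-2$ coincide at $n=2$, not at $n=1$. At $n=1$ they are $2$ and $1$, and (4)(c) needs $n\ge 2$. Your split into $n\le 2$ and $n\ge 3$ is nevertheless correct.

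\textbf{The gap is the torsion claim.} What you wrote is a list of tactics, not a proof, and the ones relevant here (both curves have trivial torsion over $\Q$) do not close.
\begin{itemize}
\item The Weil-pairing bullet concerns curves that already have rational odd torsion.
\item The only primes the construction forces to split completely in $L_n$ are those dividing $N$, mostly of bad reduction. A single good split prime only bounds $\#E(L_n)_\tors$ by $\#\tilde E(\mathbb{F}_p)$.
\item ``Finitely many checks'' via Fujita does not say what has to hold.
\end{itemize}
The missing point is that non-growth does not follow from the conclusions of Theorem~\ref{thm-main2}; it depends on which $d_k$ are used. If some $d\in\mathcal{D}_n\setminus\{1\}$ had $E^d(\Q)_\tors\neq 0$, which Theorem~\ref{thm-najmantwists} allows for up to three classes, torsion would grow.

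The paper gets non-growth from Theorem~\ref{maintheorem}(4), which is where the $d_k$ of Theorem~\ref{thm-main2} come from. There the $d_k$ are chosen so that every non-trivial $d\in\mathcal{D}_n$ has a prime factor $\ell\nmid 210N_E$ (see also Remark~\ref{rem-factoringfree}). Hence $L_n\cap F_E=\Q$ with $F_E=\Q(E[5040])\cap\Q(2)$, and Corollary~\ref{cor-fujita} gives $E(L_n)_\tors=E(\Q)_\tors$.

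Your eigenspace idea can also be completed without Fujita, but only by using that same property of the $d$'s:
\begin{itemize}
\item Since $E(\Q)[2]=0$, the $2$-division cubic is irreducible (the $S_3$ assumption is unnecessary). A root would generate a cubic subfield of $L_n$, so $E(L_n)[2]=0$.
\item For odd $p$, $E(L_n)[p^\infty]\cong\bigoplus_{d\in\mathcal{D}_n}E^d(\Q)[p^\infty]$, because $2^n$ is prime to $p$.
\item A point of order $p$ on $E^d$ forces $\Q(\sqrt d)\subseteq\Q(E[p])$. Then $\ell$, which ramifies in $\Q(\sqrt d)$, must divide $pN_E$ by N\'eron--Ogg--Shafarevich.
\item Finally, $p\le 7$ by Mazur, so $\ell\mid 210N_E$, which is a contradiction.
\end{itemize}
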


The paper is organized as follows. In Section~\ref{sec-examples} we give a number of examples of elliptic curves with the highest ranks known as of the writing of this paper, and provide some examples that illustrate the algorithm introduced in this article. In Section~\ref{sec-previous} we give an overview of previous results of rank growth in multi-quadratic extensions. In Section~\ref{sec-rootnumbers} we cite previous results in the literature on root numbers and ranks. In Section~\ref{sec-proofs} we first prove some key lemmas, and then proceed to put together the proofs of the main theorems. In Section~\ref{sec-code} we explain the code that the reader can find in our GitHub repository for this project, and we give more detailed examples of how the algorithm works. Finally, in Section \ref{sec-rank3}, we discuss work in progress to extend the constructions of this paper to quadratic twists of rank $\geq 3$ for some families of elliptic curves.

\subsection{Acknowledgements} We would like to thank Enrique Gonz\'alez-Jim\'enez for several conversations about growth of ranks in extensions, and Keith Conrad for helpful comments and suggestions. Our special thanks to Andrej Dujella for giving us very useful comments and suggestions on an earlier version of this paper. The authors would also like to thank Noam Elkies, Ritik Jain, Zev Klagsbrun, and Samuel Li for making the curves they have discovered publicly available. We would also like to thank ICARM (which is supported by NSF Grant DMS 2425401). Code for this article can be found at \href{https://github.com/dylan-costa/rank-growth-multiquadratic}{github.com/dylan-costa/rank-growth-multiquadratic}.

\subsection{Declaration of AI use} We have used Claude Opus 5.5 for literature review, to help us write Magma code, and to find examples of square-free values and quadratic twists with certain properties. We used Claude to provide a referee report on our first draft of this paper and made minor changes based on its recommendations. The main ideas of the paper were conceived before the authors even started using LLMs as a tool.

\section{Examples of Elliptic Curves of High Rank and Rank Growth}\label{sec-examples}

The starting point for our investigation is the following examples of high rank elliptic curves. Elliptic curves over $\Q$ of rank $\geq 31$ were first found by Levent Alp\"oge and Ava Howell, with the help of Claude, and announced at the ICARM database \cite{icarm}. Later, Ritik Jain and Samuel Li found several additional elliptic curves of rank $\geq 31$ with smaller conductors, and Elkies, Klagsbrun and Li found a curve of rank $\geq 32$. The curves of Theorems~\ref{thm-E32}, \ref{thm-E31} and \ref{thm-E31prime} are \href{https://elliptic-rank.icarm.cloud/curve/4239}{\#4239}, \href{https://elliptic-rank.icarm.cloud/curve/3900}{\#3900} and \href{https://elliptic-rank.icarm.cloud/curve/302}{\#302} at the ICARM website, respectively.

\begin{theorem}[Elkies, Klagsbrun, Li, 2026]\label{thm-E32}
Assuming BSD and GRH, the elliptic curve $E_{32}/\Q$ given by $y^2+xy=x^3+Ax+B$ with
\begin{align*}
A=&-28623928991398919165500467001947502452240040593445344054331\\
&\ 428726899413\\
B=&\ 16366380970220279630250861194633604823688757308892533348756934401811\\
&\ 43871885452358966522018914326964558817
\end{align*}
has $E_{32}(\Q) \cong \Z^{32}$.
\end{theorem}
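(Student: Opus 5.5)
The statement is computational: an explicit curve $E_{32}$ with $E_{32}(\Q)\cong\Z^{32}$, conditional on BSD and GRH. The plan splits into three parts: the torsion, a lower bound on the rank, and the matching upper bound.

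\textbf{Torsion.} I will show $E_{32}(\Q)_\tors$ is trivial by reduction. Since the torsion subgroup injects into $\tilde E_{32}(\mathbb{F}_p)$ for every prime $p\nmid 2N_{E_{32}}$, it suffices to find a few such primes (after factoring or partially sieving the discriminant) with $\gcd_p\#\tilde E_{32}(\mathbb{F}_p)=1$.

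\textbf{Lower bound $\rank E_{32}(\Q)\ge 32$.} I will exhibit $32$ explicit rational points $P_1,\dots,P_{32}$ on the model, found by the discoverers' search, presumably via an Elkies--Klagsbrun style construction over a rational parametrization. I will then compute the N\'eron--Tate height pairing matrix $(\langle P_i,P_j\rangle)$ to high precision and check that its determinant, the regulator, is nonzero. Because torsion is trivial, this gives independence unconditionally. The only risk here is numerical certification: the determinant must be bounded away from $0$ rigorously, using interval arithmetic on the canonical heights.

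\textbf{Upper bound $\rank\le 32$.} This is where GRH and BSD enter, and it is the main obstacle. Descent bounds (2-Selmer) will likely be too large, or impossible to compute because of the huge conductor, so I would not try them first. Instead I would use an explicit-formula bound on the analytic rank in the style of Mestre and Bober. Assuming GRH for $L(E_{32},s)$, apply Weil's explicit formula with a compactly supported test function, such as Bober's Fej\'er-type kernel. The only inputs are $\log N_{E_{32}}$ and the traces $a_p$ for primes $p$ up to a bound $e^{2\pi\Delta}$. Choosing $\Delta$ so that the resulting bound on $\ord_{s=1}L(E_{32},s)$ is $<34$, together with root-number parity $W(E_{32})=+1$ (computed from local root numbers), forces analytic rank $\le 32$.

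Rank BSD then gives $\rank E_{32}(\Q)=\ord_{s=1}L(E_{32},s)\le 32$. Combined with the lower bound and trivial torsion, this yields $E_{32}(\Q)\cong\Z^{32}$.

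The hard step is making the explicit-formula sum feasible. The conductor is enormous, so $\Delta$ must be large enough to beat the $\log N$ term. That requires point counts $a_p$ for very many primes, and it requires the conductor itself, hence the factorization of the discriminant, which may itself be difficult.
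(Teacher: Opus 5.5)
The paper gives no proof of this statement. It is quoted as a computational result: the unconditional bound $R_{E_{32}/\Q}\geq 32$ is credited to Elkies, Klagsbrun and Li, and exactness under BSD and GRH is credited to a claim of Hackfort, with no method described. Your outline is the standard way to establish such a claim, and it uses each hypothesis where it belongs:
\begin{enumerate}
\item Trivial torsion and $32$ independent points are unconditional.
\item GRH (in fact only RH for $L(E_{32},s)$) makes every $\gamma$ real, so a test function $f\geq 0$ with $f(0)=1$ gives $\ord_{s=1}L(E_{32},s)\leq\sum_\gamma f(\gamma)$.
\item The root number $W(E_{32})=+1$ makes the analytic rank even. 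The paper computes it in Example~\ref{ex-rootnumber-E32}, together with the factored conductor, so your concern about factoring the discriminant is already met.
\item The rank part of BSD transfers the bound to $E_{32}(\Q)$.
\end{enumerate}
That BSD appears among the hypotheses at all is consistent with an analytic-rank argument like yours rather than a Selmer-group bound. Compare Theorem~\ref{thm-E28}, which is stated under GRH alone.

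There are two small corrections. First, a nonsingular height matrix already gives independence modulo torsion with no assumption on torsion. Trivial torsion is needed only to get $\Z^{32}$ rather than $E_{32}(\Q)_\tors\times\Z^{32}$. Second, in the Mestre--Bober setup it is $\hat f$, not $f$, that is compactly supported. The Fej\'er kernel is nonnegative on all of $\R$ but not compactly supported, and the compact support of $\hat f$ is what truncates the prime sum at $p^k\leq e^{2\pi\Delta}$.

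Finally, success of the last step cannot be guaranteed in advance. The right-hand side of the explicit formula equals $\sum_\gamma f(\gamma)$ exactly. So it falls below $34$ only once the nonzero zeros contribute less than $2$. With $N_{E_{32}}$ of size roughly $10^{180}$, this may require a very long range of primes. As you say, this is a computational rather than a logical obstacle.
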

About the curve $E_{32}$: Elkies, Klagsbrun, and Li discovered the elliptic curve $E_{32}$ and showed $R_{E_{32}/\Q} \geq 32$, and Hackfort has claimed that BSD and GRH imply the rank is exactly $32$.

\begin{theorem}[Jain, 2026]\label{thm-E31}
The elliptic curve $E_{31}/\Q$ given by $y^2+xy=x^3+Ax+B$ with
\begin{align*}
A=&-39404268000900982457292816039978170606307632627450684342785\\
B=&\ 29972230203581841024182729278368050094604005692542359874884023483994\\
&\ 80349577137103225097
\end{align*}
satisfies $\rank E_{31}(\Q)\geq 31$.
\end{theorem}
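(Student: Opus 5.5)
The claim $\rank E_{31}(\Q)\geq 31$ is unconditional and only a lower bound, so the plan is to exhibit $31$ explicit rational points on $E_{31}$ and certify that they are independent in $E_{31}(\Q)/E_{31}(\Q)_\tors$. No analytic input (BSD, GRH) is needed for this direction.

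\textbf{Step 1: the points.} Take the $31$ points $P_1,\dots,P_{31}$ found in Jain's search, presumably coming from the specialization of a parametric family with many sections (in the style of Elkies' constructions). The first check is that each $P_i$ satisfies $y^2+xy=x^3+Ax+B$ with the stated $A,B$. This is exact integer or rational arithmetic and is routine.

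\textbf{Step 2: independence.} I would compute the canonical height pairing matrix $M=(\langle P_i,P_j\rangle)_{i,j}$ and show that its determinant, the regulator of the $P_i$, is nonzero. The standard justification is that $\hat h$ is a positive definite quadratic form on $E(\Q)\otimes\R$. So if the $P_i$ had a nontrivial relation modulo torsion, say $\sum n_iP_i\in E(\Q)_\tors$, then $n^{T}Mn=\hat h(\sum n_iP_i)=0$, and $M$ would be singular.

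\textbf{The main obstacle} is making the determinant computation rigorous, since $\hat h$ is a transcendental quantity computed only to finite precision. I would proceed as follows.
\begin{itemize}
\item Compute the local heights at the archimedean place and at the bad primes with certified error bounds, for example as in Magma's implementation with interval control.
\item Bound $\det M$ away from $0$ using the computed entries together with an explicit perturbation bound for $31\times 31$ determinants. Since the points are typically of moderate height, this determinant should be comfortably positive.
\end{itemize}

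An alternative that avoids real approximation entirely is to reduce modulo several primes of good reduction $p$. For each such $p$, compute the images of the $P_i$ in $E(\mathbb{F}_p)\otimes\Z/\ell\Z$ for a suitable small prime $\ell$ (e.g.\ $\ell=2$ or $3$), and check that the combined map
\[
\langle P_1,\dots,P_{31}\rangle\to\prod_p E(\mathbb{F}_p)/\ell
\]
has image of $\mathbb{F}_\ell$-rank $31$. That shows the $P_i$ span a subgroup whose image in $E(\Q)/\ell E(\Q)$ has rank $31$. To conclude that the rank itself is at least $31$, one must also rule out torsion contributions. For this I would compute $E_{31}(\Q)_\tors$, presumably trivial, by reducing at two good primes and comparing the orders of the reduced groups. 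Once torsion is trivial, $\dim_{\mathbb{F}_\ell}E(\Q)/\ell E(\Q)=\rank E(\Q)$, and the rank bound follows.

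This reduction approach is purely finite and checkable, so it is the one I would try first. The height regulator then serves as a numerical consistency check.
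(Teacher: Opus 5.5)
The paper gives no proof of this statement. It is quoted as an external computational result of Jain (ICARM curve \#3900), so there is no internal argument to compare against. Your outline is the standard certification behind such a claim, and it is sound as written.

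Both of your criteria are valid:
\begin{itemize}
\item \emph{Regulator.} A nonzero determinant of the height pairing matrix forces independence modulo torsion, because $\hat h$ vanishes exactly on $E(\Q)_\tors$ and $n^{T}Mn=\hat h(\sum n_iP_i)$.
\item \emph{Reduction.} Independence of the images in $\prod_p \tilde E(\mathbb{F}_p)/\ell$ gives independence in $E(\Q)/\ell E(\Q)$. This holds because reduction at a good prime is a homomorphism that factors through $E(\Q)/\ell E(\Q)$. Since $\dim_{\mathbb{F}_\ell}E(\Q)/\ell E(\Q)=\rank E(\Q)+\dim_{\mathbb{F}_\ell}E(\Q)[\ell]$, you get $\rank E(\Q)\geq 31-\dim_{\mathbb{F}_\ell}E(\Q)[\ell]$.
\end{itemize}

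Two small refinements for the reduction route. First, you only need $E(\Q)[\ell]=0$, not trivial torsion. You can certify this by reducing at good primes $p\neq\ell$, since the prime-to-$p$ torsion injects into $\tilde E(\mathbb{F}_p)$. Second, the test is only sufficient. It fails if $\ell$ divides the index of $\langle P_1,\dots,P_{31}\rangle$ in its saturation, and then you must change $\ell$.

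What the reduction route buys over the regulator route is that it is exact and needs no factorization of the discriminant, only a check that $p\nmid\Delta$. Certified canonical heights, by contrast, need the local contributions at the bad primes. The heights remain a useful numerical cross-check, as you say.

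In either case the substantive input is Jain's explicit list of $31$ points, which neither your proposal nor the paper reproduces. Without that data your argument is a verification protocol rather than a proof.
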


In the announcement for the following curve, Alp\"oge and Howell claim that, under BSD and GRH, they can show that the rank is exactly 31.

\begin{theorem}[Alp\"oge--Howell, 2026]\label{thm-E31prime}
Assuming BSD and GRH, the elliptic curve $E_{31}'/\Q$ given by $y^2+xy+y=x^3+x^2+Ax+B$ with
\begin{align*}
A=&-1284727764113567728281797636015784768866707681415849262157224232063\\
B=&\ 56036832145426133925685933890191531233276985868494540685804386919945\\
&\ 6710681989058863306170127006181
\end{align*}
has $E_{31}'(\Q)\cong\Z^{31}$.
\end{theorem}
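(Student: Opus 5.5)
This statement is computational rather than conceptual: one exhibits points on the given model and verifies their properties. To show $E_{31}'(\Q)\cong\Z^{31}$ under BSD and GRH, I would split the claim into three parts. First, the torsion subgroup is trivial. Second, the rank is at least $31$, which is unconditional. Third, the rank is at most $31$, which uses BSD and GRH.

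\emph{Torsion.} I would compute $\#\tilde E(\mathbb{F}_p)$ for several primes $p$ of good reduction. Torsion injects into $\tilde E(\mathbb{F}_p)$ for $p\nmid 2N_E$, and into $\tilde E(\mathbb{F}_p)$ modulo a $p$-group in general. So it suffices to find a few primes whose point counts have $\gcd$ equal to $1$. By Mazur's theorem only finitely many torsion structures are possible, so this check is finite and quick.

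\emph{Lower bound.} I would take the $31$ explicit points in the Alp\"oge--Howell announcement, verify that each lies on the curve, and compute their canonical heights. The $31\times31$ height pairing (regulator) matrix should have nonzero determinant, to certified precision. Then the points are independent modulo torsion, so $\rank E_{31}'(\Q)\ge 31$ with no hypotheses.

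\emph{Upper bound, the main obstacle.} The naive approach is $2$-descent, but with a conductor this large, computing $\Sel^2$ requires class group and unit computations in cubic fields of enormous discriminant. Those computations themselves are only practical under GRH, and the Selmer bound may exceed $31$ anyway because of $\Sha$. I would instead use the analytic route. Under GRH for $L(E,s)$, apply Mestre's explicit formula, or the Bober-type bound on $\ord_{s=1}L(E,s)$, with a test function whose support is chosen so that the sum over primes, computed from the $a_p$ up to some bound $X$, certifies $\ord_{s=1}L(E_{31}',s)\le 31$. BSD then gives $\rank=\ord_{s=1}L=31$. The hard part is choosing the support width: the conductor term grows like $\log N$, so $X$ must be large enough, perhaps $10^9$ or more, for the bound to drop below $32$. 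This is where I expect most of the computational effort to go. If it fails, the fallback is to compute the $2$-Selmer rank under GRH and hope it equals $31$, which would make BSD unnecessary for the upper bound.

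Combining the three parts gives $E_{31}'(\Q)\cong\Z^{31}$.
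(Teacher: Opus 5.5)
The paper gives no proof of this statement. It quotes it from the Alp\"oge--Howell announcement (curve \#302 in the ICARM database), and only reports that they \emph{claim} the exact rank under BSD and GRH. Afterwards the paper uses the curve only through its root number $W(E_{31}')=-1$, computed in Magma in the proof of Corollary~\ref{cor-highrank}. So there is no argument in the paper to compare yours against, and nothing there confirms which method Alp\"oge--Howell used.

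Your three-step plan is the standard template for claims of this kind, and it fits the stated hypotheses:
\begin{itemize}
\item a certified nonzero regulator gives $\rank E_{31}'(\Q)\geq 31$ unconditionally;
\item GRH for $L(E_{31}',s)$ lets an explicit-formula computation (Mestre/Bober) bound $\ord_{s=1}L(E_{31}',s)$;
\item BSD converts that analytic bound into a bound on the rank.
\end{itemize}
Compare $E_{28}$, which the paper states under GRH alone, citing \cite{KSW}; that is the $2$-Selmer route you list as a fallback.

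Two refinements are worth making.
\begin{itemize}
\item Since $W(E_{31}')=-1$, the analytic rank is odd. So the explicit formula only needs to produce a bound below $33$, not below $32$, to force $\ord_{s=1}L(E_{31}',s)\leq 31$.
\item In the torsion step, a $\gcd$ of $1$ among the counts $\#\tilde E(\mathbb{F}_p)$ is sufficient, but such primes need not exist. If $E_{31}'$ were $\Q$-isogenous to a curve with a rational point of order $\ell$, every count at good $p\neq\ell$ would be divisible by $\ell$. You would then have to rule out rational $\ell$-torsion directly, e.g.\ via the $\ell$-division polynomial.
\end{itemize}

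Finally, as written your upper bound is a plan rather than a proof. It becomes a proof only once the computation with your chosen test function actually lands below the threshold, and for a conductor of this size that is not automatic. You also need the conductor, or at least an upper bound for it, since the explicit-formula bound increases with $\log N$.
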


\begin{theorem}[Elkies, Klagsbrun, 2025]\label{thm-E30}
Assuming BSD and GRH, the elliptic curve $E_{30}/\Q$ given by $y^2+xy=x^3+Ax+B$ with
\begin{align*}
A=&-12892599774455576272301592959047823530919513428112484011550\\
B=&\ 56075504634839541297708882499950389061755885668763698122393566284838\\
&\ 6484980312656296132
\end{align*}
has $E_{30}(\Q)\cong\Z^{30}$.
\end{theorem}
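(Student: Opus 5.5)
The plan is computational and has three parts: trivial torsion, a lower bound $\rank E_{30}(\Q)\geq 30$, and a conditional upper bound $\rank E_{30}(\Q)\leq 30$. Only the last part uses BSD and GRH.

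\emph{Torsion.} I would first show $E_{30}(\Q)_\tors=0$. For a prime $p$ of good reduction, reduction gives an injection of the prime-to-$p$ torsion of $E_{30}(\Q)$ into $\widetilde{E}_{30}(\mathbb{F}_p)$. I would compute $\#\widetilde{E}_{30}(\mathbb{F}_p)=p+1-a_p$ for a handful of small good primes $p\nmid \Delta$. Finding two such primes whose point counts are coprime forces the torsion to be trivial. Mazur's list means only the primes $2,3,5,7$ need to be ruled out, so this is routine.

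\emph{Lower bound.} Next I would exhibit $30$ explicit points $P_1,\dots,P_{30}\in E_{30}(\Q)$, namely those found by Elkies and Klagsbrun through their search. I would check that each point lies on the curve and compute the canonical height pairing matrix $\bigl(\langle P_i,P_j\rangle\bigr)$. If its determinant, the regulator, is nonzero, the points are $\Z$-independent. To make this rigorous rather than merely numerical, I would either bound the floating-point error in the determinant, or reduce modulo several primes and check that the images generate a subgroup of $\bigoplus_p \widetilde{E}_{30}(\mathbb{F}_p)$ of rank $30$ over $\mathbb{F}_\ell$ for a suitable prime $\ell$. Together with trivial torsion, this gives a subgroup $\Z^{30}\subseteq E_{30}(\Q)$.

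\emph{Upper bound.} Under GRH for $L(E_{30},s)$, I would use the explicit formula (as in Mestre's method, or Bober's refinement) to bound the analytic rank. For an even test function $f$ with $\hat f\geq0$ and compactly supported Fourier transform, the explicit formula gives
\[
\ord_{s=1}L(E_{30},s)\,f(0)\;\leq\;\sum_{\gamma}f(\gamma)\;=\;\hat f(0)\log\frac{\sqrt{N_E}}{2\pi}\;+\;(\text{archimedean term})\;-\;2\sum_{p^k}\frac{c_{p^k}\log p}{p^{k/2}}\,\hat f\!\left(\frac{\log p^k}{2\pi}\right),
\]
where $c_{p^k}$ are the coefficients of $-L'/L$. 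The inequality holds because every zero has $f(\gamma)\geq 0$. Computing $a_p$ for all $p$ up to the support bound of $\hat f$ yields an inequality of the form $\ord_{s=1}L(E_{30},s)<32$. I would also compute the root number, expecting $W(E_{30}/\Q)=+1$, so the analytic rank is even and the bound sharpens to $\ord_{s=1}L\leq 30$. BSD then gives $\rank E_{30}(\Q)=\ord_{s=1}L(E_{30},s)\leq30$. Combined with the previous two steps, this yields $E_{30}(\Q)\cong\Z^{30}$.

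\emph{Main obstacle.} The hard step is the last one. The conductor is enormous, so the main term $\log\sqrt{N_E}$ is large. The prime sum must be taken over a long range, with a well-chosen test function such as the Fej\'er kernel or an optimized one, before the bound drops below $32$. This requires computing the conductor, which means factoring the discriminant, and then a large number of $a_p$. In practice I would first try the $\mathrm{sinc}^2$ test function with increasing support $\Delta$, and enlarge $\Delta$ until the bound falls under $32$.
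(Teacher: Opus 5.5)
The paper does not prove this statement. It quotes it from Elkies and Klagsbrun as a computational result, so there is no argument in the text to compare yours against.

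Judged on its own, your outline is the standard route and its logic is sound:
\begin{itemize}
\item trivial torsion by reduction modulo good primes;
\item $30$ points with nonzero regulator (your mod-$\ell$ independence test is legitimate because $E_{30}(\Q)[\ell]=0$);
\item an explicit-formula bound on $\ord_{s=1}L(E_{30},s)$, which needs GRH, sharpened by $W(E_{30})=+1$;
\item BSD to pass from analytic rank to algebraic rank.
\end{itemize}
This uses exactly the two hypotheses in the statement. Compare Theorem~\ref{thm-E28}, which needs only GRH. There Klagsbrun, Sherman and Weigandt bounded the rank through the $2$-Selmer group, using GRH only to certify class group computations, so no analytic-to-algebraic transfer was required.

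There are three things to tighten.
\begin{itemize}
\item \emph{Test function.} The inequality $\ord_{s=1}L(E_{30},s)\, f(0)\leq\sum_\gamma f(\gamma)$ uses $f\geq0$ on $\R$, since all $\gamma$ are real under GRH, together with $\hat f$ compactly supported. The condition $\hat f\geq 0$ is not what is needed, although the Fej\'er kernel has both properties.
\item \emph{Torsion.} Prime-to-$p$ injectivity plus coprime point counts is enough once you take $p,p'>7$. Alternatively, use that the whole torsion subgroup injects for odd good $p$.
\item \emph{Upper bound.} The outline does not by itself establish this step. Under GRH the explicit-formula bound converges to the analytic rank as the support grows. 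So it can fall below $32$ only if the analytic rank really is $30$, and only once the non-central zeros contribute less than $2$. That margin is tight and may require a very large support. Bober's implementation of this same method for Elkies' rank-$28$ curve only reached the bound $30$.
\end{itemize}

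So your argument becomes a proof only when the computation is actually carried out and certified:
\begin{itemize}
\item factoring the discriminant, to obtain $N_E$, the bad local factors and $W(E_{30})$;
\item computing $a_p$ for all $p\leq e^{2\pi\Delta}$;
\item rigorously bounding the archimedean term.
\end{itemize}
This is exactly the part the paper leaves to the discoverers.
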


\begin{theorem}[Li, 2026]\label{thm-E29}
The curve $E_{29}/\Q$ given by $y^2+xy=x^3+Ax+B$ with
\begin{align*}
A=&-2695272471630565276270102221032380764896680795950115\\
B=&\ 580706454307468239309390395193457967255167244398649255950077\\
&\ 98344169481404417
\end{align*}
satisfies $\rank E_{29}(\Q)\geq 29$.
\end{theorem}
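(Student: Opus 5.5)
The statement to be proved is Theorem~\ref{thm-E29}: the curve $E_{29}/\Q$ satisfies $\rank E_{29}(\Q)\geq 29$. Since this is only a lower bound, the plan is purely computational. We exhibit $29$ explicit rational points $P_1,\dots,P_{29}\in E_{29}(\Q)$ and show they are linearly independent in $E_{29}(\Q)\otimes\Q$. No conjecture (BSD, GRH) is needed for a lower bound. This is why the theorem, unlike Theorems~\ref{thm-E32}, \ref{thm-E31prime} and \ref{thm-E30}, carries no hypothesis.

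First, we record the points. In Li's construction they come from the specialization of a high-rank family: Mestre/Nagao--Kouya style sieving, or the Elkies--Klagsbrun approach of specializing a rank-$\geq 2x$ family over $\Q(t)$ and searching for extra points. So the generators are known as explicit pairs $(x_i,y_i)\in\Q^2$. Step one is to check, by exact integer arithmetic, that each $(x_i,y_i)$ satisfies $y^2+xy=x^3+Ax+B$ with the given $A,B$. This is routine.

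Second, we prove independence with the canonical height pairing. We compute the $29\times 29$ Néron--Tate height matrix $H=(\langle P_i,P_j\rangle)$ and show $\det H\neq 0$. The canonical height is a positive definite quadratic form on $E(\Q)/E(\Q)_\tors$, so a nonzero regulator implies that the $P_i$ are independent modulo torsion.

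To make this rigorous rather than merely numerical, there are two options. One is to bound the error in each $\hat h$ computation (local heights via Silverman's or Cremona--Prickett--Siksek algorithms with certified precision) and check that $\det H$ is bounded away from $0$. The alternative, cleaner route avoids heights entirely. We choose several primes $p$ of good reduction and map the $P_i$ into $\prod_p E(\mathbb{F}_p)$, together with a mod-$\ell$ argument. For instance, we show the images of the $P_i$ in $E(\Q)/2E(\Q)$ are independent by evaluating the 2-descent map, that is, the classes of $x_i-e$ in the relevant étale algebra modulo squares, restricted to local images. This gives independence in $E(\Q)/2E(\Q)$ modulo torsion. Mazur's theorem and a reduction check bound the torsion; if the torsion is trivial, the rank is at least $29$.

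The main obstacle is rigor given the size of the coefficients. A full 2-descent requires factoring the discriminant and working in the cubic field of $x^3+\cdots$, which may be infeasible. So I would first try the height-matrix route using the certified lower bound on eigenvalues. If the heights are too large for comfortable precision, I would fall back to the reduction-map approach. That approach needs only computations over finite fields: we find primes $p_1,\dots,p_k$ so that the $29$ points map to a subgroup of $\prod E(\mathbb{F}_{p_j})\otimes\mathbb{F}_2$ of $\mathbb{F}_2$-rank $29$, after first checking that no $P_i$ combination could be torsion (the torsion is trivial because $\#E(\mathbb{F}_p)$ for a few good $p$ have gcd $1$).
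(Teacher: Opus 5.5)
Your plan is correct and matches what stands behind the statement: the paper gives no argument of its own but cites Li's computation (ICARM curve \#728), and a rank lower bound of this kind is established by exhibiting $29$ explicit points on $E_{29}$ and certifying their independence, which is unconditional for the reason you give. As in the paper, the actual content is the list of points, which you do not supply; note also that your $\mathbb{F}_2$ reduction test only detects independence in $E_{29}(\Q)/2E_{29}(\Q)$, so it fails for every choice of primes if the points span a subgroup of even index, in which case you need an odd $\ell$ not dividing that index (the certified-regulator test has no such issue).
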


\begin{theorem}[Elkies, 2006; \cite{elkies3lectures},\cite{KSW}]\label{thm-E28}
Assuming GRH, the elliptic curve $E_{28}/\Q$ given by
\begin{align*}
&y^2+xy+y=x^3-x^2-20067762415575526585033208209338542750930230312178956502x\\
&\quad+3448161179503055646703298569039072037485594435931918036126600829629\\
&\qquad 1939448732243429
\end{align*}
has $E_{28}(\Q)\cong\Z^{28}$.
\end{theorem}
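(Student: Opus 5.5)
The statement just before this point is Theorem~\ref{thm-E28}: the Elkies curve $E_{28}$ has $E_{28}(\Q)\cong\Z^{28}$, assuming GRH. This is a computational result quoted from \cite{elkies3lectures,KSW}, so the plan is to reproduce that verification. It has three parts: a lower bound on the rank, the triviality of torsion, and a GRH-conditional upper bound on the rank.

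\textbf{Lower bound.} Elkies' search, via a specialization of a rank-rich family, produces $28$ explicit points $P_1,\ldots,P_{28}\in E_{28}(\Q)$.
\begin{enumerate}
\item First check that each $P_i$ satisfies the Weierstrass equation, which is an exact integer computation.
\item Then compute the $28\times 28$ N\'eron--Tate height pairing matrix $(\langle P_i,P_j\rangle)$ to enough precision.
\item Show that its determinant, the regulator, is nonzero. A rigorous lower bound on the determinant, together with error bounds on the local height computations, suffices: positive definiteness of the height pairing on $E(\Q)/E(\Q)_\tors$ then gives linear independence.
\end{enumerate}
This yields $\rank E_{28}(\Q)\geq 28$ unconditionally.

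\textbf{Torsion.} Reduce $E_{28}$ modulo several small primes $p$ of good reduction and compute $\#\tilde E(\mathbb F_p)$. Since $E(\Q)_\tors$ injects into $\tilde E(\mathbb F_p)$ for $p\nmid 2N_E$ (odd $p$ suffices), a gcd of $1$ among these counts shows $E_{28}(\Q)_\tors=0$.

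\textbf{Upper bound (GRH).} This is the step I expect to be the main obstacle, and I would follow Kl\'agsbrun--Sherman--Weigandt \cite{KSW}.
\begin{enumerate}
\item Use the modularity theorem to obtain the analytic continuation of $L(E_{28},s)$ and its functional equation.
\item Apply Mestre's explicit formula with a suitable even test function $f$ (compactly supported Fourier transform). Under GRH the zeros lie on the critical line, so the formula bounds $\ord_{s=1}L(E_{28},s)$ by a quantity computable from $\log N_E$ and the sums $\sum_{p^k<X} a_{p^k}\log p/p^{k}$ weighted by $f$.
\item Compute the $a_p$ up to the required bound and obtain an upper bound $<30$ on the analytic rank.
\item The root number gives the parity of the analytic rank. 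Combined with an upper bound below $30$, this forces analytic rank at most $28$ when the parity is even.
\item Pass from analytic rank to algebraic rank. Without BSD we cannot use Gross--Zagier--Kolyvagin at this rank, so \cite{KSW} instead work with the $2$-Selmer group. A $2$-descent gives $\dim_{\mathbb F_2}\Sel_2(E_{28}/\Q)$, using GRH for the class group computations in the cubic field defined by the $2$-torsion.
\item If that dimension is $28$ (torsion being trivial), then $\rank\leq 28$.
\end{enumerate}
I would try the Selmer route first, since it is what makes the statement conditional only on GRH rather than on BSD.

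Combining the three parts gives $E_{28}(\Q)\cong\Z^{28}$.
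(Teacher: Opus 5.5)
Your plan is correct and is essentially the argument behind the citation (the paper gives no proof of its own, only the references). Elkies' $28$ explicit points with nonzero regulator give $R_{E_{28}/\Q}\geq 28$, and Klagsbrun--Sherman--Weigandt carry out a full $2$-descent, using GRH for the class group and unit computations in the cubic field cut out by the $2$-torsion, to get $R_{E_{28}/\Q}\leq\dim_{\mathbb{F}_2}\Sel_2(E_{28}/\Q)=28$; with trivial torsion this gives $E_{28}(\Q)\cong\Z^{28}$. Your explicit-formula steps (1)--(4) are an unnecessary detour that you can drop, since an upper bound on the analytic rank says nothing about $R_{E_{28}/\Q}$ without BSD.
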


The following curve is currently one of the highest rank elliptic curves known over a quadratic field.

\begin{theorem}[Elkies, 2025; \cite{elkiesrank34}]\label{thm-E17}
Assuming GRH, the elliptic curve $E_{17}/\Q$ given by
\[y^2=x^3-908800736629952526116772283648363\]
has $E_{17}(\Q)\cong\Z^{17}$ and $E_{17}(\Q(\sqrt{-3}))\cong\Z^{34}$.
\end{theorem}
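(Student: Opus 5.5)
The plan is to reduce the statement over $\Q(\sqrt{-3})$ to a statement over $\Q$, and then prove $E_{17}(\Q)\cong\Z^{17}$ by matching a lower bound from explicit points with an upper bound from descent; GRH enters only in the descent. Write $E=E_{17}\colon y^2=x^3+D$ with $D=-908800736629952526116772283648363$, and $K=\Q(\sqrt{-3})$.

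\textbf{Reduction to $\Q$.} Since $K/\Q$ is quadratic, restriction and the trace map give an isomorphism
\[
E(K)\otimes\Q\cong \bigl(E(\Q)\otimes\Q\bigr)\oplus\bigl(E^{-3}(\Q)\otimes\Q\bigr).
\]
Hence $R_{E/K}=R_{E/\Q}+R_{E^{-3}/\Q}$. For a curve with $j=0$, the twist by $-3$ is $E^{-3}\colon y^2=x^3-27D$. This curve is $3$-isogenous to $E$ over $\Q$ via the explicit isogeny
\[
(x,y)\mapsto\Bigl(\tfrac{x^3+4D}{x^2},\ \tfrac{y(x^3-8D)}{x^3}\Bigr).
\]
Isogenous curves have the same rank, so $R_{E/K}=2R_{E/\Q}$, and the whole statement reduces to $R_{E/\Q}=17$ and the torsion claims.

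\textbf{Torsion.} Torsion is handled by reduction. Computing $\#\tilde E(\mathbb{F}_p)$ for a few small primes $p\nmid 6D$ with $p\equiv 1\pmod 3$ (which split in $K$) and taking the gcd shows $E(\Q)_\tors=E(K)_\tors=0$. One also checks directly that $D$ is neither a square, a cube, nor $-432$ times a sixth power, which rules out the $2$- and $3$-torsion one might expect on a $j=0$ curve.

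\textbf{Lower bound.} Exhibit $17$ explicit rational points, presumably found by Elkies' lattice or search method. Compute their canonical height pairing matrix and verify that its determinant, the regulator, is nonzero to sufficient precision. This gives $R_{E/\Q}\ge17$ unconditionally.

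\textbf{Upper bound (main obstacle).} I would use a full $2$-descent. Since $E(\Q)[2]=0$, the $2$-Selmer group embeds in $A^\times/(A^\times)^2$ for the cubic field $A=\Q(\sqrt[3]{-D})$, and computing it requires the class group and unit group of $A$. Because $|D|$ is about $10^{33}$, the discriminant of $A$ is huge, so these can only be computed via Buchmann-type algorithms whose correctness is certified under GRH (Bach's bound). This is exactly where the hypothesis enters.

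I expect the result to be $\dim_{\mathbb{F}_2}\Sel_2(E/\Q)=17$. Then $R_{E/\Q}\le 17$, which combined with the lower bound gives $E(\Q)\cong\Z^{17}$ and hence $E(K)\cong\Z^{34}$.

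If the $2$-Selmer rank exceeds $17$, I would instead try a $3$-isogeny descent through $\phi\colon E\to E^{-3}$. This also involves class groups, now of $K$ and of $\Q(\sqrt{D})$, still requiring GRH at this size. Alternatively I would combine $2$- and $3$-Selmer information, or use the Cassels--Tate pairing, to kill the excess coming from $\Sha$.
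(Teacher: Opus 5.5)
The paper does not prove this statement. It is quoted from Elkies \cite{elkiesrank34} as an example, so there is no argument in the paper to compare with, and I can only assess your proposal on its own terms.

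\textbf{The reduction and torsion steps are correct.} You have $R_{E/K}=R_{E/\Q}+R_{E^{-3}/\Q}$, and your map really is a $3$-isogeny $E\to E^{-3}$, since $(x^3+D)(x^3-8D)^2=(x^3+4D)^3-27Dx^6$. Hence $R_{E/K}=2R_{E/\Q}$ for every curve with $j=0$, which is exactly why the rank doubles. The torsion step is also fine, and you can do it without reduction:
\begin{itemize}
\item $E(K)[2]=0$ once you check that $D$ is not a cube.
\item The odd part of $E(K)_\tors$ equals the odd part of $E(\Q)_\tors\oplus E^{-3}(\Q)_\tors$.
\item $E^{-3}\colon y^2=x^3-27D$ has trivial torsion. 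Indeed $3\nmid D$ (the digit sum of $D$ is $154$), so $-27D$ is not a square; $D$ is not a cube; and $-27D=-432m^6$ would force $D=16m^6>0$, whereas $D<0$.
\end{itemize}

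\textbf{What is missing is the actual content of the theorem, namely the two computations.} The lower bound needs the $17$ points, which you do not have. For the upper bound you only \emph{expect} $\dim_{\mathbb{F}_2}\Sel_2(E/\Q)=17$, that is, $\Sha(E/\Q)[2]=0$. Nothing in your argument forces this. If it fails, your fallback is not concrete, and it is partly misstated.

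Let $\phi$ be the isogeny with kernel $\{O,(0,\pm\sqrt D)\}$. The kernels $E[\phi]$ and $E^{-3}[\hat\phi]$ are $\Z/3\Z$ twisted by the quadratic characters of $\Q(\sqrt D)$ and $\Q(\sqrt{-3D})$. So the $\phi$- and $\hat\phi$-Selmer groups are governed by the $3$-parts of the class groups, and by the units, of $\Q(\sqrt{D})$ and $\Q(\sqrt{-3D})$. They are not governed by $K$, which has class number $1$.

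Since $E(\Q)[\phi]=E^{-3}(\Q)[\hat\phi]=0$, you get $3^{R_{E/\Q}}=\#\bigl(E^{-3}(\Q)/\phi E(\Q)\bigr)\cdot\#\bigl(E(\Q)/\hat\phi E^{-3}(\Q)\bigr)$. Hence $R_{E/\Q}\le\dim\Sel^{\phi}+\dim\Sel^{\hat\phi}$.

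\textbf{Suggested route.} This $3$-isogeny descent is the natural primary route for a $j=0$ curve, for two reasons:
\begin{itemize}
\item It needs class groups only of two quadratic fields of discriminant at most $12|D|$, computed under GRH. Your $2$-descent instead needs the class and unit groups of a cubic field of discriminant up to about $27D^2$.
\item It handles $E$ and $E^{-3}$ simultaneously, which matches the doubling.
\end{itemize}
In either descent, the bound is sharp only if the relevant part of $\Sha$ vanishes, and that can only be settled by doing the computation. As it stands, your proposal is a correct blueprint for a computer verification, not a proof.
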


\begin{remark}
    Let $E_{32}/\Q$ be the curve of Theorem \ref{thm-E32}. In a recent preprint (see \cite{CostaLozanoRobledo-rank35}), the authors have found a quadratic twist $E_{32}^D/\Q$ of $E_{32}$ of rank $\geq 2$ and root number $-1$, where $D$ is an integer with $411$ digits. Thus, according to the Parity Conjecture and together with Prop. \ref{prop-ranksum}, we would have $E_{32}/\Q(\sqrt{D})$ has rank $\geq 35$.
\end{remark}

This article is concerned with rank increase in (multi-quadratic) extensions. The following examples show some particular cases where we get rank increases on an elliptic curve when we base extend to a quadratic field.

\begin{example}\label{ex-rank0twist}
Let $E/\Q$ be an elliptic curve of rank $R$ over $\Q$, and let $d$ be a non-zero square-free integer such that the quadratic twist $E^d/\Q$ of $E$ is a curve of rank 0 (for a definition of quadratic twist see the beginning of Section~\ref{sec-twists}). Note that by the quadratic-twist non-vanishing theorem of Friedberg--Hoffstein and the rank-zero theorem of Gross-Zagier-Kolyvagin--Logachev (see Theorems~\ref{thm-bsdrank01} and \ref{thm-twistsrank0} below), every elliptic curve $E/\Q$ has infinitely many quadratic twists $E^d$ with $\rank E^d(\Q)=0$. Therefore, if we let $A=E^d$ with $E$ as before, then note that $A^d\cong(E^d)^d\cong E$ over $\Q$, and therefore
\begin{enumerate}
\item $A/\Q$ is an elliptic curve of rank zero,
\item its quadratic twist $A^d/\Q$ is an elliptic curve of rank $R$, and
\item $A/\Q(\sqrt{d})$ is an elliptic curve of rank $R$ over $\Q(\sqrt{d})$, by Prop.~\ref{prop-kida}.
\end{enumerate}
For example, if $E_{31}/\Q$ is the curve defined in Theorem~\ref{thm-E31}, then there exists a quadratic twist $A=E_{31}^d$ of rank 0 (finding a single value $d$ that works, though, is quite challenging computationally) such that the rank of $A$ over $\Q$ jumps to 31 when we base extend from $\Q$ to $\Q(\sqrt{d})$.
\end{example}

The next example will illustrate the methods of this paper. More details will be provided in Example~\ref{ex-x3minus2} after we prove our results.

\begin{example}\label{ex-x3minus2-intro}
Consider the elliptic curve $E:y^2=x^3-2$ with $E(\Q)\cong\Z$, generated by the point $(3,5)$, and with LMFDB label \texttt{1728.o3}. The following lists of square-free integers $d$ give three quadratic twists $E^d$ of the given rank:
\begin{itemize}
\item Rank 0 twists: $d=2,5,6$.
\item Rank 1 twists: $d=-1,3,7$.
\end{itemize}
While the rank of $E(\Q(\sqrt{d}))$ for $d=2,5,6$ is one (the rank of $E(\Q)$), one can verify (using Prop.~\ref{prop-kida}) that the rank of $E(\Q(\sqrt2,\sqrt5,\sqrt6))$ is 4 (the rank contributions happen for $d=1,3,10,30$, each contributing $1$). Similarly, the rank of $E(\Q(\sqrt{d}))$ is $2$ for $d=-1,3,7$, but the rank of $E(\Q(\sqrt{-1},\sqrt3,\sqrt7))$ is 8.

In this paper, we aim to construct (explicitly) quadratic twists of rank $\geq 2$. The twists of rank $\geq 2$ of this particular curve with $|d|\leq 62$ are $d=-29,-41,-53,53,62$, each of rank exactly $2$. The values $d=-29,62,123$ are all twists of rank $2$ produced by a naive search over the integers $x$ (the square-free parts of $f(x)$ for $|x|\leq 7$, discarding the twists by $-2$ and $6$, of rank $0$), and one can show that the rank of $E(\Q(\sqrt{-29},\sqrt{62},\sqrt{123}))$ is 12. Since $62$ and $123$ are not relatively prime to $N_E=1728$, our theorem does not apply to this field. However, the root numbers follow the same pattern as in Lemma~\ref{lem:multiquadratic-rank}(2): $W(E^d)=+1$ for $d=-29,62,123$ and $-221154$, and $W(E^d)=-1$ for the three pairwise products. So Parity predicts that the rank is $\geq 10$ (the twist by $-1798$ has rank 3 instead of 1 as one would expect just from parity).

Our theorem produces values
\begin{align*}
    d_1 & =-4407871453975949551199\\
    d_2 & =-21268670741331361021169357399\\
    d_3 & =-513363857667648563737041085212527519
\end{align*}
such that $E(\Q(\sqrt{d_1},\sqrt{d_2},\sqrt{d_3}))$ has rank $\geq 10$ assuming parity. Obviously, the explicit values produced by the theorem are only of theoretical value, but the key is that the algorithm produces an \emph{explicit} construction of values $d_k$, with $k\geq 1$, such that $E(\Q(\sqrt{d_k}:k=1,\ldots,n))$ has rank $\geq 2n+2^{n-1}$ under parity. See Example~\ref{ex-x3minus2} for more details.
\end{example}

\begin{example}\label{ex-x3x2}
The algorithm proved in this paper also works for elliptic curves $E/\Q$ with a given torsion subgroup. Take, for example, $E:y^2=x^3-x^2-9x+9$, which is an elliptic curve with $E(\Q)_\tors\cong\Z/2\Z\times\Z/2\Z$ and rank 1. The rank of $E(\Q(\sqrt{d}))$ for $d=15,-35,-47$ is 3, and the rank of $E(\Q(\sqrt{15},\sqrt{-35},\sqrt{-47}))$ is 12, while the torsion remains $\Z/2\Z\oplus\Z/2\Z$. The torsion does grow over some quadratic fields: over $\Q(\sqrt{-2})$ it becomes $\Z/2\Z\times\Z/4\Z$, since the $2$-torsion point $(1,0)$ is halved there, as $1-3=-2$ and $1+3=4$. Our methods, instead, produce
\begin{align*}
d_1 & =-3651379397538713876111237249085635399\\
d_2 & =-17624510871075773916049759505271880999507199,\\
d_3 & =-425412847147925968212830332740563816177071867083719
\end{align*}

with $E(\Q(\sqrt{d_1},\sqrt{d_2},\sqrt{d_3}))$ of rank $\geq 10$, assuming parity, and full 2-torsion (the torsion is exactly $\Z/2\Z\times\Z/2\Z$ over this field by Corollary~\ref{cor-fujita}, since every non-trivial $d\in\mathcal{D}_3$ is $\equiv 1\bmod 8$, prime to $210N_E$ and not $\pm1$, so it has a prime factor not dividing $210N_E$).
\end{example}

\begin{example}\label{ex-torsiongrowth}
Finally, we want to provide an example of a curve $E/\Q$ where the torsion subgroup grows in a multi-quadratic extension, but it does not grow in any particular quadratic extension. Let $E/\Q$ be given by $y^2=x(x-2)(x+3)=x^3+x^2-6x$ with LMFDB label \texttt{480.e3}. The torsion over $\Q$ is isomorphic to $\Z/2\Z\times\Z/2\Z$ and there is no growth in any quadratic field. However
\[E(\Q(\sqrt2,\sqrt5))\cong\Z/2\Z\times\Z/4\Z\]
and
\[E(\Q(i,\sqrt2,\sqrt3,\sqrt5))\cong\Z/4\Z\times\Z/4\Z.\]
In our results we use Theorem~\ref{thm-fujita} and Cor.~\ref{cor-fujita} to construct multi-quadratic extensions where the torsion subgroup does not grow.
\end{example}

\section{Previous Results on Rank Growth}\label{sec-previous}

A number field is \emph{multiquadratic} if it is a compositum of quadratic fields. Let $G$ be a finite subgroup of $\Q^\times/\Q^{\times2}$, and put $K=\Q(\sqrt{G})=\Q(\{\sqrt{g}:g\in G\})$, so that $\Gal(K/\Q)\cong G^\vee$ is 2-elementary abelian of order $|G|$. Decomposing the representation $E(K)\otimes\Q$ of $\Gal(K/\Q)$ into characters gives the well-known identity
\begin{equation}\label{eq-kida-group}
\rank E(K)=\sum_{d\in G}\rank E^d(\Q),
\end{equation}
where $E^d$ denotes the quadratic twist of $E$ by $d$ (see Prop.~\ref{prop-kida} for the statement). Thus rank growth in multiquadratic extensions is the same problem as finding twists of positive rank indexed by a subgroup (or a coset of a subgroup) of $\Q^\times/\Q^{\times2}$. In this section we summarize known results in this direction.
\begin{itemize}
\item Mestre (see \cite{mestre}) constructed a parametric quadratic twist of rank $\geq 2$ for certain elliptic curves over $\Q$. We will come back to this construction in Section \ref{sec-rank3} to explain some potential uses and limitations of Mestre's construction.
\item Im and Larsen \cite{ImLarsen-parallelopipeds} show that for every $n\geq1$ there exist an elliptic curve $E/\Q$ and a subgroup $G$ of $\Q^\times/\Q^{\times2}$ of order $|G|=2^n$ such that $E^d$ has positive rank for every nontrivial $d\in G$. By \eqref{eq-kida-group}, we have that $\rank E(\Q(\sqrt{G}))\geq 2^n-1$ for a field of degree $2^n$. Earlier, Im and the second author \cite{ImLozano} gave curves $E$ over a number field $K$ together with polynomials $P_1,P_2\in K[t]$ such that the twists of $E$ by $P_1$, $P_2$ and $P_1P_2$ all have positive rank over $K(t)$. Specializing $t$ produces biquadratic extensions $K(\sqrt{P_1(t_0)},\sqrt{P_2(t_0)})$ in which all three nontrivial twists contribute to the rank. As a consequence, for every $\sigma$ in the absolute Galois group of $K$, the curve $E$ has infinite rank over the fixed field of $\sigma$ in the maximal abelian extension of $K$.
\item Park, Poonen, Voight and Wood \cite[\S12.4]{PPVW} prove that for every $n\geq0$ there is a multiquadratic field $K$ of degree $2^n$ such that a positive proportion of all elliptic curves $E/\Q$ satisfy $\rank E(K)\geq 2^n=[K:\Q]$. In particular, $\limsup_E\rank E(K)\geq[K:\Q]$, where $E$ runs over elliptic curves over $K$. They prove their results by combinining \eqref{eq-kida-group} with the density Hales--Jewett theorem of Furstenberg and Katznelson and with the result of Bhargava, Skinner and Zhang that a positive proportion of curves have rank one. Their argument, however, does not identify which of finitely many candidate fields $K$ works, although their compositum can be given explicitly. In the same paper they also obtain, from Heegner points in anticyclotomic towers, fields $K$ of unbounded degree with $\limsup_E\rank E(K)\geq[K:\Q]/2$.
\item T. Dokchitser and V. Dokchitser \cite{DDallquadratic} show, assuming the Birch and Swinnerton-Dyer conjecture, that there are elliptic curves $E$ over number fields $K$ all of whose quadratic twists have positive rank. They characterize such curves by their local behaviour and by the Galois action on the Tate module. For such $E$, the analogue of \eqref{eq-kida-group} over $K$ forces $\rank E(L)\geq\rank E(K)+[L:K]-1$ for every multiquadratic extension $L/K$, still conditionally on BSD. In particular, Goldfeld's conjecture on the average rank of quadratic twists does not extend verbatim to number fields.
\item Bartel \cite{bartel} exhibits a large family of semistable elliptic curves $E/\Q$ whose 2-Selmer group grows in every biquadratic extension of $\Q$. For odd $p$, the $p$-Selmer group grows in every $D_{2p}$-extension and every elementary abelian $p$-extension of rank at least 2.
\item Let $\Q(2)$ denote the compositum of all quadratic extensions of $\Q$. For $E/\Q$, infinite rank over $\Q(2)$ follows from \eqref{eq-kida-group} together with the existence of infinitely many square classes $d$ with $\rank E^d(\Q)\geq1$. These can be taken to be the square-free parts of the values $f(x_0)$, where $E:y^2=f(x)$ and $x_0\in\Q$. Indeed, these values lie in infinitely many distinct square classes (explicitly by Theorem~\ref{maintheorem}, which produces infinitely many independent ones), and the point $(x_0,m)$ on $E^d:dy^2=f(x)$, with $f(x_0)=dm^2\neq 0$, is not a $2$-torsion point, so it has infinite order whenever $E^d(\Q)_\tors=E(\Q)[2]$, that is, for all but at most three square classes $d$ by Theorem~\ref{thm-najmantwists}. Bruin and Najman \cite{BruinNajman} ask whether every elliptic curve $E$ over a number field $K$ has infinite rank over $K\Q(2)$. They show that a positive answer would imply that, for every $n\geq2$, some elliptic curve over a number field has infinitely many quadratic twists of rank a positive multiple of $2^n$. In the same paper they prove that $\rank A(L)-\rank A(K)\neq1$ for an abelian variety $A/K$ whenever $\Gal(L/K)$ has no subgroup of index 2. Over the larger field $\Q^{\rm ab}$, infinite rank is known: Frey and Jarden \cite{FreyJarden1974} proved it for every $E/\Q$, and Im and Larsen \cite{ImLarsen-Qab} extended it to curves over quadratic fields with $j\neq0,1728$.
\end{itemize}

The construction in the present paper also produces subgroups of twists of positive rank, but explicitly. It gives square-free $d_1,\ldots,d_n$, independent modulo squares, each with $W(E^{d_k})=+1$ and an explicit point of infinite order on $E^{d_k}$. Hence $\rank E^{d_k}(\Q)\geq 2$ for each $k$, assuming the parity conjecture. In contrast to the constructions summarized above, the field $\Q(\sqrt{d_1},\ldots,\sqrt{d_n})$ is determined explicitly by our algorithm.

In order to control the growth of the torsion subgroup of $E/\Q$ when base-extended to a quadratic extension $K/\Q$, we will make use of the following theorems of Fujita, Gonz\'alez-Jim\'enez and Tornero, Laska and Lorenz, and Najman, about the growth of torsion in quadratic extensions and torsion subgroups of quadratic twists (see \cite{GJT2} for further refined results in this direction).

\begin{theorem}[\cite{GJT2}, Corollary 4]\label{thm-enrique}
Let $E/\Q$ be an elliptic curve. Then, there are at most $4$ distinct quadratic fields such that $E(\Q)_{tors}\subsetneq E(K)_{tors}$.
\end{theorem}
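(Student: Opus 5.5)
The plan is to split the torsion of $E(K)$, for $K=\Q(\sqrt d)$, into its odd part and its $2$-primary part, bound separately the number of quadratic fields in which each part can grow, and then run a case analysis on the fifteen possible groups $E(\Q)_\tors$ from Mazur's theorem to bring the combined count down to $4$.

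\textbf{Odd part.} Let $\sigma$ be the generator of $\Gal(K/\Q)$. For odd $m$, the decomposition into $\pm1$-eigenspaces of $\sigma$ gives
\[E(K)[m]\cong E(\Q)[m]\oplus E^d(\Q)[m].\]
So odd torsion grows over $K$ exactly when $E^d(\Q)$ has a point of odd prime order $p$. By Mazur, $p\in\{3,5,7\}$.

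Such a point gives a Galois-stable line in $E[p]$ on which $G_\Q$ acts through the quadratic character $\chi_d$. If $E[p]$ has a stable line with character $\psi$, the other diagonal character is $\omega_p\psi^{-1}$, where $\omega_p$ is the mod $p$ cyclotomic character. Since $E[p]$ has at most two stable lines (if it is split) or one (otherwise), there are at most two values of $d$ for each $p$. Further constraints cut this down:
\begin{itemize}
\item for $p=5,7$, the character $\omega_p$ is not quadratic;
\item if $E(\Q)$ already has a point of order $p$, then $\psi=1$;
\item a twist cannot acquire $p$-torsion for two primes at once beyond Mazur's bounds.
\end{itemize}
I expect these to leave at most two fields with odd growth, and only in special configurations, for example $3$-torsion twisting against $\omega_3=\chi_{-3}$.

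\textbf{$2$-primary part.} Write $E:y^2=f(x)$. If $E(\Q)[2]=0$, then $E(K)[2]\neq 0$ forces $f$ to have a root in $K$, which is impossible for an irreducible cubic. Hence no new $2$-torsion appears over any quadratic field, and growth of $2^\infty$-torsion can only come through points of order $4$ or more, which again forces $E(\Q)[2]\ne0$.

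If $E(\Q)[2]=\Z/2$, then full $2$-torsion appears only over $\Q(\sqrt{\Delta})$.

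For halving a rational $2$-torsion point $(e,0)$ (and more generally a point of order $2^k$), use the classical criterion: $(e,0)\in 2E(K)$ iff $e-e'$ and $e-e''$ are squares in $K$. For a fixed $e$ this happens in at most a bounded set of fields. These are $\Q(\sqrt{e-e'})$, $\Q(\sqrt{e-e''})$ or $\Q(\sqrt{(e-e')(e-e'')})$, depending on which factors are already squares, and the requirement that the relevant products be squares forces a single field per halvable point.

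\textbf{Combining.} Finally, go through each $E(\Q)_\tors$ from Mazur's list and count the candidate fields from the two parts simultaneously. Cyclic groups of order $\ge 5$ leave room for little extra growth, by Kamienny--Kenku--Momose, since the quadratic torsion must still lie on their list. Small groups such as $\Z/2\times\Z/2$ give at most three halving fields plus possible odd growth, constrained by the isogeny structure.

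The main obstacle is the sharp constant $4$. It requires showing that the odd and $2$-primary contributions cannot simultaneously reach their individual maxima, for instance by showing that a rational $3$-isogeny twisting to $3$-torsion is incompatible with three independent halving fields. This is where I would lean on the known moduli (modular curve) descriptions of each torsion configuration to exclude the remaining combinations.
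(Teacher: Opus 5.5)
The paper gives no proof of this statement. It is quoted from Gonz\'alez-Jim\'enez--Tornero, where the bound $4$ comes out of a complete case-by-case determination of the possible torsion growth over quadratic fields for each of Mazur's fifteen groups. Measured against that, your proposal is an outline rather than a proof. The decisive step, showing that the odd and $2$-primary contributions cannot add up to $5$, is explicitly deferred. Even the odd-part bound ``at most two fields'' is only expected.

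Your third bullet does not establish that bound, because it concerns a single twist. What matters is that two \emph{different} twists $E^d$, $E^{d'}$ with points of order $p\neq p'$ give $E$ rational $p$- and $p'$-isogenies, hence a cyclic $pp'$-isogeny. Likewise, two $3$-growth fields force $E[3]$ to be split, hence a cyclic $9$-isogeny in the isogeny class. The counting is therefore governed by the Mazur--Kenku list of degrees of rational cyclic isogenies, not by lists of torsion groups. In the same way, Kamienny--Kenku--Momose restricts which group occurs in each field, not how many fields there are.

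There is also a concrete error in the $2$-primary part. The criterion ``$e-e'$ and $e-e''$ are squares in $K$'' presupposes $E[2]\subseteq E(K)$. Your conclusion ``a single field per halvable point'' fails when $E(\Q)[2]\cong\Z/2\Z$. Write $E:y^2=x(x^2+ax+b)$ with $T=(0,0)$. The halves of $T$ have $x=\pm\sqrt b$. If $b=s^2$, then $f(\pm s)=s^2(a\pm 2s)$, so $T$ is halved over both $\Q(\sqrt{a+2s})$ and $\Q(\sqrt{a-2s})$, and $(a+2s)(a-2s)=a^2-4b$ is the discriminant class. For example, on $y^2=x^3+x^2+x$ the points $(1,\sqrt3)$ and $(-1,i)$ both double to $(0,0)$, and full $2$-torsion appears over $\Q(\sqrt{-3})$. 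That gives three distinct fields of $2$-primary growth starting from $E(\Q)[2]\cong\Z/2\Z$. This happens exactly when $E/\langle T\rangle$ has full rational $2$-torsion.

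Now add a rational $3$-isogeny. Its character is automatically quadratic or trivial, since $\mathbb{F}_3^\times=\{\pm1\}$, so it can contribute a fourth field via $3$-torsion in a twist. Excluding a fifth field then requires, for example, the non-existence of rational cyclic isogenies of degree $36$, $60$ and $84$. Carrying out this kind of analysis over all fifteen groups is exactly what your proposal leaves out.
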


\begin{theorem}[\cite{najmantwists}, \cite{najmanerratum}, Theorem 1]\label{thm-najmantwists}
Let $E/\Q$ be an elliptic curve. Then, the number of values $d\in\Q^\times/(\Q^\times)^2$ such that $E(\Q)[2]\subsetneq E^d(\Q)_{tors}$ is at most $3$.
\end{theorem}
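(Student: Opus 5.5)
The plan is to sort the possible enlargements of $E^d(\Q)_\tors$ beyond $E(\Q)[2]$ into two kinds, count the square classes $d$ giving each kind, and then rule out too many of both kinds at once using Mazur's list of torsion groups and the classification of rational isogenies. This is Najman's theorem as corrected in \cite{najmanerratum}; what follows is the route I would take.

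\textbf{Setup.} Write $E:y^2=f(x)$ and $E^d:dy^2=f(x)$. The $2$-torsion points of $E$ and $E^d$ are the points $(e,0)$ with $f(e)=0$, so $E^d(\Q)[2]=E(\Q)[2]$ for every $d$. Hence $E(\Q)[2]\subsetneq E^d(\Q)_\tors$ happens exactly when one of the following holds:
\begin{enumerate}
\item $E^d(\Q)$ contains a point of order $4$;
\item $E^d(\Q)$ contains a point of odd prime order $p$.
\end{enumerate}

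\textbf{Points of order $4$.} A point $P$ of order $4$ on $E^d$ satisfies $2P=T=(e_1,0)$ for some $e_1$ with $f(e_1)=0$, so $T\in E(\Q)[2]$. By the standard $2$-descent halving criterion applied to the model $dy^2=f(x)$, if $T$ is not the only rational $2$-torsion point, this forces both $d(e_1-e_2)$ and $d(e_1-e_3)$ to be squares. If $T$ is the only rational $2$-torsion point, the same criterion applies over $\Q(e_2)$. In either case $d$ is determined modulo squares by $T$. Since there are at most three choices of $T$, at most three classes $d$ arise this way, and I would check that the two conditions are compatible for only one class per $T$.

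\textbf{Points of odd order $p$.} For $d\neq 1$ we have $E^d(\Q)[p]\cong E(\Q(\sqrt d))[p]^{\chi_d}$. So $E[p]$ contains a Galois-stable line on which $G_\Q$ acts through the quadratic character $\chi_d$, and in particular $E$ has a rational $p$-isogeny. By the Weil pairing, $E[p]^{ss}\cong\chi_d\oplus\chi_d\omega_p$, where $\omega_p$ is the mod $p$ cyclotomic character. Two distinct classes $d,d'$ with the same $p$ would give $\chi_d\ne\chi_{d'}$ inside this semisimplification. Since $\omega_p$ is not quadratic for $p\ge5$, that forces $\chi_{d'}=\chi_d\omega_p$, which is possible only for $p=3$ with $d'=-3d$. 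Therefore each odd prime contributes at most one class, or two when $p=3$. By Mazur's theorem, $E^d(\Q)_\tors$ is in his list, so $p\in\{3,5,7\}$. By Kenku's classification of rational cyclic isogenies, the possible combinations of isogeny degrees are very restricted; for instance $E$ cannot have both a $5$- and a $7$-isogeny, nor both a $2$- and a $7$-isogeny together with extra $2$-power structure.

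\textbf{Combining the two kinds.} This is the step I expect to be the main obstacle. One has to do a case analysis on $E(\Q)[2]\in\{0,\Z/2,(\Z/2)^2\}$ and on the isogeny graph of $E$, showing that odd-torsion classes and $4$-torsion classes together never exceed three.

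\begin{itemize}
\item When $E(\Q)[2]=(\Z/2)^2$: a $4$-torsion class together with an odd $p$ gives torsion $\Z/2\times\Z/4p$ on $E^d$, which is excluded by Mazur unless $p$ is small. It also gives an isogeny of degree $4p$, which Kenku excludes for most $p$.
\item When $E(\Q)[2]=0$: only odd classes occur, at most two for $p=3$ plus at most one for another prime. Kenku's list (no $15$- or $21$-isogeny with the relevant characters) rules out three odd classes except in finitely many $j$-invariants.
\end{itemize}

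I would first try to force every excess configuration to yield a rational cyclic isogeny of degree outside Kenku's list. For the remaining finitely many $j$-invariants (such as those with $15$- or $21$-isogenies), I would check directly. This direct check is exactly where the erratum's correction to the count enters.
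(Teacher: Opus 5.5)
The paper does not prove this statement; it quotes it from Najman's paper and its erratum. Your outline therefore has to stand on its own, and it already fails at the $4$-torsion count.

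\textbf{The $4$-torsion count is wrong when $E(\Q)[2]\cong\Z/2\Z$.} In that case the class of $d$ is \emph{not} determined by $T$. Write $E:y^2=x(x^2+ax+b)$ with $a^2-4b\notin\Q^{\times2}$.

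\begin{itemize}
\item On a curve $y^2=x(x^2+Ax+B)$, a point $P$ with $2P=(0,0)$ exists iff the tangent at $P$ is a line $y=mx$. This happens iff $B=S^2$ and $A+2S\in\Q^{\times2}$ for one of the two signs of $S$.
\item For $E^d$ one has $A=da$ and $B=d^2b$. So $E^d(\Q)$ contains such a point iff $b=s^2$ and $d\equiv a+2s$ or $d\equiv a-2s$ modulo squares.
\item These are two distinct classes, since their product is $a^2-4b$. Over $\Q(e_2)=\Q(\sqrt{a^2-4b})$ your criterion only determines $d$ modulo $\Q^\times\cap\Q(e_2)^{\times2}$, which consists of two square classes.
\end{itemize}

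For example, take $f(x)=x^3+x^2+x$. The point $(1,1)$ on $3y^2=f(x)$ and the point $(-1,1)$ on $-y^2=f(x)$ both have order $4$: their tangent lines $y=x$ and $y=-x$ pass through $(0,0)$.

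This is not a side issue. The condition $b\in\Q^{\times2}$ means $E$ has a rational cyclic $4$-subgroup. If $E$ also has a rational $3$-isogeny, so a cyclic $12$-isogeny (a genus-$0$ family), one generically gets two $\Z/4\Z$-twists and one $\Z/6\Z$-twist in three distinct classes. That is where the bound $3$ is sharp, in an infinite family rather than at finitely many $j$-invariants. Your bookkeeping cannot see this configuration, and your case analysis omits $E(\Q)[2]\cong\Z/2\Z$ entirely.

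\textbf{The combination step is missing.} Your treatment of the odd part is correct: at most one class for each $p\ge5$, and at most the pair $d,-3d$ for $p=3$. But the combination step, which is the real content of the theorem, is only announced, and the exclusions you cite are the wrong ones. Rational cyclic $12$-, $15$- and $21$-isogenies all exist. In the $(\Z/2\Z)^2$ case, a $4$-torsion twist and a $3$-torsion twist in \emph{different} classes are ruled out neither by Mazur nor by a $12$-isogeny. What you need is finer isogeny-graph information:

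\begin{enumerate}
\item[(i)] If $T_i$ is halved in some twist, then $E$ has a rational cyclic $4$-subgroup through $T_i$. Hence $E/\langle T_i\rangle$ has full rational $2$-torsion. If moreover $E(\Q)[2]\cong(\Z/2\Z)^2$, the isogeny class contains a cyclic $8$-isogeny.
\item[(ii)] Two $3$-torsion classes $d,-3d$ force $E[3]\cong\chi_d\oplus\chi_{-3d}$, hence a cyclic $9$-isogeny in the class.
\item[(iii)] With full $2$-torsion, at most two of the $T_i$ can be halved. The three conditions $(e_i-e_j)(e_i-e_k)\in\Q^{\times2}$ would multiply to $-1\in\Q^{\times2}$.
\end{enumerate}

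Combine these with Kenku (no rational cyclic isogenies of degree $20,24,30,35,36,45,63$) and Mazur (no $\Z/14\Z$ or $\Z/2\Z\times\Z/10\Z$). This gives at most $2$ classes when $E(\Q)[2]$ is $0$ or $(\Z/2\Z)^2$, and at most $2+1=3$ when $E(\Q)[2]\cong\Z/2\Z$. Without the corrected $4$-torsion count and these ingredients, your outline does not establish the bound.
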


\begin{theorem}[Laska--Lorenz \cite{LaskaLorenz}, Fujita \cite{fujita1,fujita2}]\label{thm-fujita}
Let $E/\Q$ be an elliptic curve, and let $\Q(2)=\Q(\{\sqrt{d}:d\in\Z\})$. Then, $E(\Q(2))_\tors$ is isomorphic to one of the following $20$ groups:
\[\begin{array}{ll}
\Z/N\Z, & N=1,3,5,7,9,15,\\
\Z/2\Z\times\Z/2N\Z, & N=1,2,3,4,5,6,8,\\
\Z/3\Z\times\Z/3\Z, & \\
\Z/4\Z\times\Z/4N\Z, & N=1,2,3,4,\\
\Z/2N\Z\times\Z/2N\Z, & N=3,4.
\end{array}\]
\end{theorem}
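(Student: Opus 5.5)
The statement just above is Theorem~\ref{thm-fujita}, the Laska--Lorenz--Fujita classification of $E(\Q(2))_\tors$ for $E/\Q$. The paper cites it rather than proving it, but here is how I would reconstruct a proof. The strategy has two halves: first restrict which primes and torsion structures can occur, then eliminate or realize each candidate group using modular curves.

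\emph{Step 1 (bounded primes).} A point $P$ of order $p$ in $E(\Q(2))$ is defined over a multiquadratic field $F$ of finite degree. Its Galois orbit therefore lives in a 2-elementary abelian extension of $\Q$. Take $\sigma\in\Gal(\overline{\Q}/\Q)$; it acts on $E(\Q(2))[p^\infty]$ through an abelian group of exponent $2$. Hence the image of $\Gal(\Q(2)/\Q)$ on the $\Q(2)$-rational part of $E[p^k]$ consists of commuting involutions.

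If $E[p]\subseteq E(\Q(2))$ for odd $p$, the Weil pairing forces $\mu_p\subset\Q(2)$. This gives $p-1\mid 2$, so $p=3$. If only a cyclic subgroup $\langle P\rangle$ of order $p$ is defined over $\Q(2)$, then $\sigma(P)=\chi(\sigma)P$ with $\chi$ of order dividing $2$. So $\langle P\rangle$ is $\Q$-rational and $P$ is rational over a quadratic twist: $P$ becomes a rational point of $E^d$ for some $d$. Mazur's theorem applied to twists then bounds $p\le7$, and Kenku's isogeny classification restricts the possible odd cyclic parts to $N\in\{1,3,5,7,9,15\}$. For each such $N$, the curve $E$ or one of its twists must have a $\Q$-rational cyclic subgroup of that order on which Galois acts by $\pm1$.

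\emph{Step 2 (the 2-part).} Here I would analyze the image of Galois on $E[2^\infty]$ and use that $\Q(2)$ contains all $\sqrt{d}$. Halving a point $Q$ of $E(\Q(2))$ requires square roots of the differences of the roots $e_i$ of the cubic (the standard 2-descent criterion already used in Example~\ref{ex-x3x2}). Iterating this criterion bounds the 2-power torsion to $\Z/4\times\Z/16$ or $\Z/8\times\Z/8$ type structures. Rational $2$-power isogeny graphs (Kenku) then cut the list down to the stated possibilities.

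\emph{Step 3 (combining and realizing).} I would combine the odd and $2$-parts using the classification of rational isogenies of composite degree, excluding for instance $\Z/2\times\Z/30$ and $\Z/4\times\Z/12\cdot$. The aim is to arrive at exactly the 20 groups listed. For realization, one exhibits a curve for each group, for example via Example~\ref{ex-torsiongrowth}-type constructions.

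The main obstacle is Step 2 together with the mixed cases in Step 3. Eliminating groups such as $\Z/4\times\Z/20$ or $\Z/8\times\Z/8$ beyond the allowed ones requires determining the rational points on several modular curves of moderate genus. This is precisely the part Fujita carried out case by case, and I would follow that analysis there.
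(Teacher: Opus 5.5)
\textbf{Assessment.} The paper gives no proof of this theorem. It quotes it from Laska--Lorenz, who reduced $E(\Q(2))_\tors$ to a finite list of candidates, and from Fujita, who determined exactly which of them occur. So your sketch has to stand on its own, and it does not. The decisive parts are all deferred to Fujita: the bound on the $2$-part, the mixed cases, the modular-curve computations, and a curve realizing each of the $20$ groups. In addition, several of the steps you do make explicit are wrong in ways that would produce the wrong list.

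\textbf{Step 1.} You correctly show that Galois acts by a character with values in $\{\pm1\}$ on a Galois-stable subgroup of \emph{prime} order $p$. You then carry this over to composite $N$, and that fails. For $N=15$ the character takes values in the $2$-torsion of $(\Z/15\Z)^\times$, which is $\{1,4,11,14\}$, not $\{\pm1\}$. Indeed, if Galois acted by $\pm1$ on a cyclic subgroup of order $15$, some quadratic twist would have a rational point of order $15$, contradicting Mazur. So your criterion would eliminate $\Z/15\Z$, which is on the list. That group occurs precisely when the $3$-part and the $5$-part of a rational $15$-isogeny kernel are twisted by \emph{different} quadratic characters. There are two further omissions here. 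Kenku's classification does not by itself remove $N=21$: rational $21$-isogenies exist, so the finitely many such $j$-invariants must be checked. And the non-cyclic odd case, $E[3]\subseteq E(\Q(2))$ together with further odd torsion such as $\Z/3\Z\times\Z/9\Z$ or $\Z/3\Z\times\Z/15\Z$, is not treated.

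\textbf{Step 2.} This step contains no working mechanism. The halving criterion asks whether the elements $x_0-e_i$ are squares in $\Q(2)$. Already for a $2$-torsion point these differences are irrational when only one $e_i$ is rational. After one halving, the relevant $x$-coordinates are typically irrational elements of $\Q(2)$. But $\Q(2)$ is far from closed under square roots: for instance $\sqrt{1+\sqrt2}\notin\Q(2)$, because $\Q(\sqrt{1+\sqrt2})/\Q$ is not Galois. So the fact that $\Q(2)$ contains $\sqrt d$ for every $d\in\Q$ gives no control, and the bound ``$\Z/4\times\Z/16$ or $\Z/8\times\Z/8$'' is the answer restated rather than derived. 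The input that actually does the work is that $G_\Q$ acts on the finite group $E(\Q(2))_\tors$ through an elementary abelian $2$-group. Combined with the Weil pairing, this gives for example $E[16]\not\subseteq E(\Q(2))$, since $\Gal(\Q(\zeta_{16})/\Q)\cong\Z/2\Z\times\Z/4\Z$. It is also what produces Galois-stable cyclic subgroups, hence rational cyclic isogenies, to which Kenku's bounds and the modular-curve arguments are applied.

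\textbf{Step 3.} This step contradicts the statement. You list $\Z/4\Z\times\Z/12\Z$ among the groups to exclude and $\Z/8\Z\times\Z/8\Z$ among those to eliminate. Both are among the $20$ groups: they are the cases $\Z/4\Z\times\Z/4N\Z$ with $N=3$ and $\Z/2N\Z\times\Z/2N\Z$ with $N=4$. Eliminating $\Z/8\Z\times\Z/8\Z$ is also inconsistent with your own Step 2.
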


\section{Background on Root Numbers and Quadratic Twists}\label{sec-rootnumbers}

In this section, we collect several results concerning root numbers of elliptic curves over $\Q$. For a complete treatment of root numbers we refer the reader to \cite{rohrlichPCMI}. For an elliptic curve $E/\Q$ the associated global root number, denoted $W(E/\Q)\in\{\pm1\}$, is defined as a product
\[W(E/\Q)=\prod_{p\leq\infty}W_p(E/\Q)\]
where $W_p(E/\Q)$ is the local root number, and the product runs through all the finite and infinite places of $\Q$. The local root numbers $W_p(E/\Q)$ are, in turn, defined in terms of the epsilon factors of the Weil--Deligne representation of $\Q_p$, and their precise definition can be found in \cite{rohrlichPCMI} (see also \cite{rohrlichgalois}). In \cite{rohrlich}, Rohrlich gives explicit formulas for $W_p(E/\Q)$ according to the reduction type of $E$ at $p$. In particular, the formulas show that $W_p(E/\Q)=1$ for all but finitely many places $p$ and the global root number is well defined.

\begin{proposition}[\cite{rohrlich}, Prop. 1, 2, and 3]\label{prop-rohrlich}
Let $E$ be an elliptic curve over $\Q$ and $p\leq\infty$ be a place of $\Q$. Then:
\begin{enumerate}
\item[(a)] $W_\infty(E/\Q)=-1$.
\item[(b)] If $E$ has good reduction at $p$, then $W_p(E/\Q)=1$.
\item[(c)] If $E$ has split multiplicative reduction, then $W_p(E/\Q)=-1$.
\item[(d)] If $E$ has non-split multiplicative reduction, then $W_p(E/\Q)=1$.
\item[(e)] If $p>2$ and $E$ has additive potential multiplicative reduction over $\Q_p$, then $W_p(E/\Q)=\left(\frac{-1}{p}\right)$, the Legendre symbol of $-1$ on $p$.
\item[(f)] If $p>3$ and $E$ has additive potential good reduction over $\Q_p$, and we let $\Delta_p$ be the minimal discriminant of $E/\Q_p$, and $e=12/\gcd(\ord_p(\Delta_p),12)$, then
\[W_p(E/\Q)=\begin{cases}
\left(\frac{-1}{p}\right) & \text{if } e=2 \text{ or } 6,\\[4pt]
\left(\frac{-3}{p}\right) & \text{if } e=3,\\[4pt]
\left(\frac{-2}{p}\right) & \text{if } e=4.
\end{cases}\]
The value $e=1$ does not occur for additive reduction.
\end{enumerate}
\end{proposition}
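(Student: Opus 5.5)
Since Proposition~\ref{prop-rohrlich} is quoted from \cite{rohrlich}, the plan is to reconstruct Rohrlich's argument. For each place $p$ we attach to $E/\Q_p$ its Weil--Deligne representation $\sigma_{E,p}'$, obtained from the $\ell$-adic Tate module with $\ell\neq p$. We then compute $W_p(E/\Q)=\varepsilon(\sigma_{E,p}',\psi_p)/|\varepsilon(\sigma_{E,p}',\psi_p)|$ for a fixed additive character $\psi_p$, using the standard properties of $\varepsilon$-factors: additivity in exact sequences, inductivity in degree zero, and the twisting formula for unramified characters. The independence of the answer from the choice of $\psi_p$ follows because $\det\sigma_{E,p}'$ is the cyclotomic character, which is trivial on $-1$.

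For the archimedean place, $\sigma_{E,\infty}$ is induced from the character $z\mapsto z^{-1}$ (suitably normalized) of $\mathbb{C}^\times$ to the Weil group of $\R$. The Tate formula gives $\varepsilon=i^{1+1}=-1$, which proves (a).

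For (b), good reduction means $\sigma_{E,p}'$ is unramified with trivial monodromy. Choosing $\psi_p$ of conductor $\Z_p$ makes every $\varepsilon$-factor equal to $1$.

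For (c) and (d), Tate uniformization identifies $\sigma_{E,p}'$ with $\chi\otimes\mathrm{sp}(2)$, where $\chi$ is unramified, trivial in the split case and quadratic in the non-split case. The formula $\varepsilon(\chi\otimes\mathrm{sp}(2))=\varepsilon(\chi|\cdot|)\,\varepsilon(\chi)\cdot\bigl(-\chi(\mathrm{Frob})\bigr)$ then gives $W_p=-\chi(\mathrm{Frob})$, which is $-1$ or $+1$ respectively.

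For (e), potentially multiplicative additive reduction with $p>2$ means $\chi$ is the ramified quadratic character of $\Q_p^\times$. In that case the monodromy contributes no extra factor, since $\ker N$ has no inertia invariants. We get $W_p=W(\chi)^2=\chi(-1)=\left(\frac{-1}{p}\right)$.

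Part (f) is the main work. For $p>3$, inertia acts through a cyclic group of order $e=12/\gcd(\ord_p\Delta_p,12)$, because the action is tame. We split into two cases.

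1. If $e\mid p-1$ (the principal series case), then $\sigma_{E,p}=\eta\oplus\eta^{-1}\omega^{-1}$ for a character $\eta$ whose restriction to $\Z_p^\times$ has order $e$. The product of the two Gauss-sum root numbers is $\eta(-1)$.
2. If $e\nmid p-1$ (the supercuspidal case), then $\sigma_{E,p}$ is induced from a character of the unramified quadratic extension. Inductivity, together with the Langlands $\lambda$-constant $\lambda=-1$ for an unramified quadratic extension, gives a sign that can be computed explicitly.

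In both cases we expect the result to be a quadratic symbol at $p$ depending only on $e$. We then match the values: $e=2,6$ give $\left(\frac{-1}{p}\right)$, $e=3$ gives $\left(\frac{-3}{p}\right)$, and $e=4$ gives $\left(\frac{-2}{p}\right)$. For example, when $e=3$ the answer is $+1$ exactly when $p\equiv1\bmod 3$, and $-1$ in the supercuspidal case $p\equiv2\bmod3$.

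The bookkeeping of these Gauss-sum and $\lambda$-constant signs is where I expect the real obstacle, especially for $e=4$ and $e=6$. Finally, $e=1$ cannot occur, since it would mean that inertia acts trivially, i.e.\ good reduction.
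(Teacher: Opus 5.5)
The paper gives no proof of this proposition; it is quoted from Rohrlich. So the comparison here is with the standard argument, and your outline follows it. Parts (a)--(e), and your reduction of (f) to the cases $e\mid p-1$ and $e\mid p+1$, are fine as sketched. One wording fix in (e): there is no monodromy correction because $V^{I}=0$, since $\chi$ is ramified.

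The genuine gap is that you never evaluate the signs in (f). You stop at $\eta(-1)$ in the principal-series case and at ``a sign that can be computed explicitly'' in the supercuspidal case, and then assert the matching. Those signs are the entire content of (f), so the argument is incomplete until they are computed. The missing step is short.

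\textbf{Principal series.} Here $\eta|_{\Z_p^\times}$ has exact order $e$ on a cyclic group of order $p-1$, so $\eta(-1)=(-1)^{(p-1)/e}$.

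\textbf{Supercuspidal case} ($e\in\{3,4,6\}$, $e\mid p+1$). Write $\sigma=\mathrm{Ind}_K^{\Q_p}\chi$ with $K/\Q_p$ unramified quadratic.
\begin{enumerate}
\item The determinant $\det\sigma=\eta_K\cdot\chi|_{\Q_p^\times}$ is a power of $|\cdot|$. Hence $\chi$ is trivial on $\Z_p^\times$, and $\chi(p)/|\chi(p)|=\eta_K(p)=-1$.
\item So $\chi|_{\mathcal{O}_K^\times}$ is a character of exact order $e$ of $\mathbb{F}_{p^2}^\times/\mathbb{F}_p^\times$, which is cyclic of order $p+1$.
\item Split the Gauss sum $\sum_{u\in\mathbb{F}_{p^2}^\times}\bar\chi(u)\psi_0(\mathrm{Tr}\,u)$ over $\mathbb{F}_p^\times$-cosets. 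By orthogonality only the trace-zero coset $\mathbb{F}_p^\times\sqrt{\delta}$ survives, with $\delta$ a non-square. The sum is therefore $p\,\bar\chi(\sqrt\delta)=p\,(-1)^{(p+1)/e}$, because $\sqrt\delta$ has order $2$ in the quotient.
\item With $\psi$ of conductor $\Z_p$, Tate's formula gives $W(\chi,\psi_K)=\frac{\chi(p)}{|\chi(p)|}\,\bar\chi(\sqrt\delta)=-(-1)^{(p+1)/e}$. Hence $W_p=-(-1)^{(p+1)/e}$.
\end{enumerate}
Only now is the matching a finite congruence check on $p$ modulo $24$, and it does confirm the table.

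This computation also shows that your value $\lambda=-1$ does not fit the normalization you fixed in (b). For $\psi$ of conductor $\Z_p$ one has $\lambda(K/\Q_p,\psi)=+1$. In general $\lambda=(-1)^{n(\psi)}$, and the value $-1$ holds for a level-one character (trivial on $p\Z_p$ but not on $\Z_p$). Switching to such a $\psi$ also multiplies $W(\chi,\psi_K)$ by $\chi(p)^{-1}/|\chi(p)|^{-1}=-1$, so the product is unchanged. But pairing $\lambda=-1$ with the conductor-$\Z_p$ Gauss sum flips every supercuspidal sign. For instance, it gives $+1$ instead of $\left(\frac{-3}{p}\right)=-1$ when $e=3$ and $p\equiv 2\bmod 3$.

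Two smaller points:
\begin{itemize}
\item Independence from $\psi$ comes from $\det\sigma(a)=|a|^{\pm1}>0$. Triviality of $\det\sigma$ at $-1$ is what forces $W_p=\pm1$, not independence.
\item That inertia acts through a group of order exactly $12/\gcd(\ord_p\Delta_p,12)$ does not follow from tameness alone. It is the Serre--Tate description, for $p\geq5$, of the minimal extension over which $E$ acquires good reduction, and it should be cited or proved.
\end{itemize}
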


For the remaining cases for $p=2$ and $p=3$, we refer the reader to \cite{halberstadt} and \cite{rizzo}. With the formulas indicated above and given the conductor of an elliptic curve $E/\Q$, one can compute the global root number in most cases. We recall that the conductor $N_E$ of an elliptic curve is defined as the product
\[N_E=\prod_{p<\infty}p^{f_p}\]
where
\[f_p=\begin{cases}0&\text{if } E \text{ has good reduction at } p,\\ 1&\text{if } E\text{ has multiplicative reduction at }p,\\ 2+\delta_p&\text{if } E \text{ has additive reduction at }p,\end{cases}\]
and $\delta_p=0$ for all $p>3$ and measures wild ramification for $p=2,3$ (see \cite[Appendix C]{silverman}.)

\begin{example}\label{ex-390}
Consider the elliptic curve
\[y^2+xy+y=x^3-289x+3092\]
with $N_E=390=2\cdot3\cdot5\cdot13$, and LMFDB label \texttt{390.c4} (see \cite{lmfdb}). For the infinite place, we always have $W_\infty(E/\Q)=-1$. In order to compute the global root number, it remains to consider the finite primes dividing the conductor. At $p=2,3,5,13$ we have that $\nu_p(N_E)=1$, and thus $E$ has multiplicative reduction at $p$. Moreover, one computes that for $p=3$ the curve has split multiplicative reduction, and at $p=2,5,13$ the curve has non-split multiplicative reduction. Thus,
\begin{align*}
W(E/\Q)&=W_2(E/\Q)W_3(E/\Q)W_5(E/\Q)W_{13}(E/\Q)W_\infty(E/\Q)\\
&=(1)(-1)(1)(1)(-1)\\
&=1.
\end{align*}
Assuming the Parity Conjecture, which we state next (see Conjecture~\ref{conj-parity}), the algebraic rank must satisfy $R_{E/\Q}\equiv0\bmod 2$, and one can indeed verify using descent techniques that $R_{E/\Q}=0$.
\end{example}

\begin{conjecture}[Parity Conjecture]\label{conj-parity}\label{parity}
Let $E/\Q$ be an elliptic curve with rank $R_{E/\Q}$, and let $W(E/\Q)$ be the global root number of $E/\Q$. Then,
\[W(E/\Q)=(-1)^{R_{E/\Q}}.\]
\end{conjecture}

\begin{example}\label{ex-rootnumber-E32}
Consider $E_{32}/\Q$ as defined in Theorem~\ref{thm-E32} which is of rank $R_{E_{32}/\Q}\geq 32$. Its conductor is
\[N_E=2\cdot3\cdot5^2\cdot11^2\cdot13\cdot17\cdot23\cdot31\cdot37\cdot41\cdot43\cdot53\cdot89\cdot p_1\cdot p_2\]
where
\begin{align*}
p_1&=2220713642109042278879558804334283,\\
p_2&=3500766112881427717056865531427442780770809079495985232632875772\\
&\quad 24658029115989670361016516064255191976556239262775289081591134547.
\end{align*}
The curve has multiplicative reduction at every prime $p\mid N_E$ except $p=5$ and $p=11$, and at those primes Proposition~\ref{prop-rohrlich} gives $W_p(E_{32}/\Q)=-1$ if the reduction is split and $W_p(E_{32}/\Q)=1$ if it is non-split. At $p=5$ and $p=11$ the reduction is additive, of Kodaira types $\mathrm{I}_0^*$ and $\mathrm{IV}$ respectively, and potentially good, since $\ord_p(j(E_{32}))\geq0$. For $p=5$ we have $\ord_5(\Delta)=6$, so $e=2$ and $W_5(E_{32}/\Q)=\left(\frac{-1}{5}\right)=1$. For $p=11$ we have $\ord_{11}(\Delta)=4$, so $e=3$ and $W_{11}(E_{32}/\Q)=\left(\frac{-3}{11}\right)=-1$. Using Magma (\cite{magma}), we can readily compute
\[W_p(E_{32}/\Q)=\begin{cases}1&p=5,13,17,23,31,43,53,89,p_1,p_2,\\-1&p=2,3,11,37,41,\infty.\end{cases}\]
Therefore, $W(E_{32}/\Q)=(-1)^6=1$, in agreement with the Parity Conjecture since $R_{E_{32}/\Q}\geq32$ is expected to be equal to 32.
\end{example}

\subsection{Root numbers of quadratic twists.}\label{sec-twists} Our results depend on the behavior of the ranks of quadratic twists of a fixed elliptic curve over $\Q$, so we now state two results concerning their root numbers.

Given an elliptic curve $E/\Q$ with (short) Weierstrass model $y^2=x^3+Ax+B$, we denote by $E^d$ the quadratic twist of $E$ by $d\in\Z$, and we remind the reader that $E^d$ is given by the Weierstrass equation
\[E^d:y^2=x^3+d^2Ax+d^3B.\]
or, alternatively, by the model $dy^2=x^3+Ax+B$. The following result explains how the rank of an elliptic curve defined over $\Q$ grows in multi-quadratic extensions (and generalizes \cite[Ch. X, Exercise 10.16]{silverman}).

\begin{proposition}[\cite{kida}, Prop. 1]\label{prop-kida}\label{prop-ranksum}
Let $E/\Q$ be an elliptic curve and let $d_1,\ldots,d_n$ be integers such that $K_n=\Q(\sqrt{d_1},\ldots,\sqrt{d_n})$ has degree $[K_n:\Q]=2^n$. Let $R_{E/\Q}$ and $R_{E^{d_i}/\Q}$, for each $1\leq i\leq n$, be the ranks of $E(\Q)$ and $E^{d_i}(\Q)$, respectively, and let $R_{E/K_n}$ be the rank of $E(K_n)$. Let
\[\mathcal{D}_n:=\left\{d\in\Z\ \middle|\ d=\prod_{i=1}^nd_i^{e_i},\ e_i\in\{0,1\}\right\}.\]
Then, the rank of $E$ over $K_n$ is given in terms of the ranks of the quadratic twists $E^{d_i}$ over $\Q$ by
\[R_{E/K_n}=\sum_{d\in\mathcal{D}_n}R_{E^d/\Q}.\]
\end{proposition}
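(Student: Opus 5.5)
The plan is to decompose the Mordell--Weil group of $E$ over $K_n$ into eigenspaces for the Galois group. Write $G=\Gal(K_n/\Q)$. Since $[K_n:\Q]=2^n$, the classes of $d_1,\ldots,d_n$ are independent in $\Q^\times/(\Q^\times)^2$. Hence $G\cong(\Z/2\Z)^n$, and its character group $G^\vee$ is in bijection with $\mathcal{D}_n$: the element $d\in\mathcal{D}_n$ corresponds to the character $\chi_d$ with $\sigma(\sqrt d)=\chi_d(\sigma)\sqrt d$. The representation $V=E(K_n)\otimes_\Z\Q$ of $G$ is a finite-dimensional rational representation of an elementary abelian $2$-group. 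Every character of $G$ is $\{\pm1\}$-valued, hence defined over $\Q$, so the idempotents $e_d=\frac{1}{|G|}\sum_{\sigma\in G}\chi_d(\sigma)\sigma$ split $V$ as
\[V=\bigoplus_{d\in\mathcal{D}_n}V^{\chi_d}.\]
It follows that $R_{E/K_n}=\sum_{d}\dim_\Q V^{\chi_d}$.

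It remains to identify $\dim V^{\chi_d}$ with $R_{E^d/\Q}$. For $d=1$ this is the statement that $V^G=E(\Q)\otimes\Q$. To prove it, note that for $P\in E(K_n)$ with class fixed by $G$, some multiple $mP$ satisfies $\sigma(mP)-mP\in E(K_n)_\tors$ for all $\sigma$. Replacing it by the trace $\sum_\sigma \sigma(P)$ shows that $|G|\cdot[P]$ lies in the image of $E(\Q)$. Conversely, $E(\Q)\otimes\Q\to V$ is injective because torsion dies after tensoring. For general $d$, take $E^d: dy^2=f(x)$. The map $\phi:E^d\to E$, $(x,y)\mapsto(x,y\sqrt d)$, is an isomorphism defined over $\Q(\sqrt d)\subseteq K_n$, and it satisfies $\sigma\circ\phi=\chi_d(\sigma)\phi\circ\sigma$, since $\sigma(y\sqrt d)=\chi_d(\sigma)y\sqrt d$ and negation on $E$ is $(x,y)\mapsto(x,-y)$. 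Therefore $\phi$ identifies $E^d(K_n)$ with $E(K_n)$, carrying the $G$-invariants of $E^d$ to the $\chi_d$-isotypic part of $E$, up to torsion. By the $d=1$ case applied to $E^d$, those invariants are $E^d(\Q)\otimes\Q$, which gives $\dim V^{\chi_d}=R_{E^d/\Q}$. Summing over $d\in\mathcal{D}_n$ yields the formula.

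The main obstacle is bookkeeping rather than any deep step. One must check that the twisted Galois action matches the character exactly, including the case where the model is not a short Weierstrass form. If $d$ is replaced by its square-free part the twist is unchanged, so the indexing by $\mathcal{D}_n$, which consists of integers possibly with square factors, is harmless. One must also make sure that torsion contributions are properly discarded, which the tensoring with $\Q$ handles.
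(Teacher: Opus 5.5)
Your argument is correct and takes the same route the paper indicates: the paper, citing Kida, obtains the identity by decomposing $E(K_n)\otimes\Q$ into the $\pm1$-valued characters of $\Gal(K_n/\Q)\cong(\Z/2\Z)^n$, which correspond to the elements of $\mathcal{D}_n$. You then identify the $\chi_d$-isotypic piece with $E^d(\Q)\otimes\Q$, using the trace argument for invariants and the twisting isomorphism $(x,y)\mapsto(x,y\sqrt d)$ defined over $\Q(\sqrt d)$, which supplies the details the paper leaves implicit.
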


In certain cases, the root number of a quadratic twist can be easily computed from the root number of the original curve, as the following two results show.

\begin{lemma}[\cite{dokchitsernonabelian}, Corollary 2]\label{kronlemma}
Suppose $E$ is an elliptic curve over $\Q$, let $N_E$ be the conductor of $E/\Q$ and let $d\in\Z$ be a fundamental discriminant (i.e., either $d\equiv1\bmod4$ with $d$ square-free or $d=4d'$ with $d'\equiv2,3\bmod4$, with $d'$ square-free).
\begin{enumerate}
\item If $\gcd(N_E,d)=1$, then $W(E^d)=W(E)\cdot\left(\frac{d}{-N_E}\right)$ where $\left(\frac{\cdot}{\cdot}\right)$ is the Kronecker symbol.
\item If $d,d'$ are fundamental discriminants, relatively prime to $N_E$ and to each other then
\[W(E^{dd'})=W(E^d)W(E^{d'})W(E).\]
\end{enumerate}
\end{lemma}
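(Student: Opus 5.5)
The plan is to prove part (1) by comparing local root numbers place by place, and then to deduce part (2) formally from (1). Let $\chi=\chi_d$ be the quadratic Hecke character of $\Q$ attached to $\Q(\sqrt d)$, with local components $\chi_v$. Since $d$ is a fundamental discriminant, $\chi$ has conductor $|d|$, and for $m$ prime to $d$ the Kronecker symbol satisfies $\left(\frac{d}{m}\right)=\chi(m)$. Here $\chi(m)$ is read as a Dirichlet character, so negative $m$ contributes $\chi_\infty(-1)=\sign(d)$. Twisting $E$ by $d$ corresponds to tensoring the Weil--Deligne representation $\rho_{E,v}$ with $\chi_v$ at every place, so $W(E^d)=\prod_v W(\rho_{E,v}\otimes\chi_v)$, and I would compute the ratio $W_v(E^d)/W_v(E)$ at each $v$.

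\textbf{Places not dividing $d$.} Here $\chi_v$ is unramified, and I would use the standard twisting formula $\varepsilon(\rho\otimes\chi_v)=\varepsilon(\rho)\,\chi_v(p)^{a(\rho)}$, where $a(\rho)$ is the Artin conductor exponent. This gives $W_p(E^d)=W_p(E)\,\chi_p(p)^{f_p}$ for $p\mid N_E$ and $W_p(E^d)=W_p(E)=1$ for $p\nmid dN_E$. The formula holds even for wild reduction at $p=2,3$, because an unramified twist only shifts the epsilon factor by the conductor.

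\textbf{Places dividing $d$.} At $p\mid d$ the curve $E$ has good reduction, since $\gcd(d,N_E)=1$, so $\rho_{E,p}=\alpha\oplus\beta$ with $\alpha,\beta$ unramified and $\alpha\beta=\omega^{-1}$. Then
\[
W_p(E^d)=\frac{\varepsilon(\chi_p\alpha)\,\varepsilon(\chi_p\beta)}{|\cdot|}=\chi_p(-1),
\]
using $\varepsilon(\chi\alpha)=\alpha(\pi)^{a(\chi)}\varepsilon(\chi)$ and $\varepsilon(\chi_p)^2=\chi_p(-1)\,p^{a(\chi_p)}$. This step includes $p=2$ when $d=4d'$. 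At $\infty$ both $E$ and $E^d$ have root number $-1$.

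\textbf{Combining.} Multiplying the local ratios gives
\[
\frac{W(E^d)}{W(E)}=\prod_{p\mid N_E}\chi_p(p)^{f_p}\prod_{p\mid d}\chi_p(-1).
\]
To identify this with $\left(\frac{d}{-N_E}\right)$, I would apply the product formula $\prod_v\chi_v(-N_E)=1$. At $p\nmid dN_E$ the element $-N_E$ is a unit and $\chi_p$ is unramified, so $\chi_p(-N_E)=1$. At $p\mid N_E$ we have $\chi_p(-N_E)=\chi_p(p)^{f_p}$, since the unit part is killed by the unramified $\chi_p$. At $p\mid d$ we have $\chi_p(-N_E)=\chi_p(-1)\chi_p(N_E)$. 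The product formula therefore relates $\prod_{p\mid N_E}\chi_p(p)^{f_p}$ to $\prod_{p\mid d}\chi_p(-N_E)\,\chi_\infty(-N_E)$. The latter product is exactly the Dirichlet value $\chi(-N_E)=\left(\frac{d}{-N_E}\right)$, up to the factor $\prod_{p\mid d}\chi_p(-1)\cdot\chi_\infty(-1)=\chi$ evaluated on $-1$ globally, which equals $1$. Bookkeeping these sign factors correctly is the main obstacle: the $\chi_p(-1)$ contributions at $p\mid d$, the infinite place, and the wild case $p=2\mid d$ must cancel exactly. I would double-check this last step on a small example, such as \texttt{390.c4} twisted by $d=-7$.

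\textbf{Part (2).} Since $d$ and $d'$ are coprime fundamental discriminants, $dd'$ is again a fundamental discriminant, and it is prime to $N_E$. By (1) and the multiplicativity of the Kronecker symbol in its top argument,
\[
W(E^{dd'})=W(E)\left(\frac{d}{-N_E}\right)\left(\frac{d'}{-N_E}\right)=W(E)\cdot\frac{W(E^d)}{W(E)}\cdot\frac{W(E^{d'})}{W(E)}.
\]
Since $W(E)^{2}=1$, the right-hand side equals $W(E^d)\,W(E^{d'})\,W(E)$, which is the claim.
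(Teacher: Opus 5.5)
The paper gives no proof of this lemma; it only cites \cite{dokchitsernonabelian}. Your place-by-place argument is the standard proof behind that citation: an unramified twist at $p\mid N_E$ changes $W_p$ by $\chi_p(p)^{f_p}$, one has $W_p(E^d)=\chi_p(-1)$ at the primes $p\mid d$ where $E$ has good reduction, and a global product formula finishes. Your local computations, your displayed formula for $W(E^d)/W(E)$, and your deduction of (2) are all correct. The one problem is the last identification, which as written is off by $\sign(d)$.

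For an integer $m$ prime to $d$, the quadratic character satisfies $\chi(m)=\prod_{p\mid d}\chi_p(m)$, with no archimedean factor. Hence
\[\prod_{p\mid d}\chi_p(-N_E)\,\chi_\infty(-N_E)=\left(\frac{d}{-N_E}\right)\sign(d),\]
and not $\left(\frac{d}{-N_E}\right)$ times a trivial factor.

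Taken literally, your last step turns the displayed ratio into $\left(\frac{d}{-N_E}\right)\prod_{p\mid d}\chi_p(-1)=\left(\frac{d}{N_E}\right)$, which is wrong. The test case you propose detects this: for \texttt{390.c4} and $d=-7$ one has $\left(\frac{-7}{-390}\right)=1$, which is the true ratio, but $\left(\frac{-7}{390}\right)=-1$.

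The extra $\sign(d)$ is cancelled exactly by the factor $\prod_{p\mid d}\chi_p(-1)$ in your ratio. That factor equals $\chi_\infty(-1)=\sign(d)$ by the product formula applied to $-1$.

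It is cleaner to skip the product formula at $-N_E$ altogether. Since $\chi_p(p)=\left(\frac{d}{p}\right)$ for $p\nmid d$,
\[\frac{W(E^d)}{W(E)}=\prod_{p\mid N_E}\left(\frac{d}{p}\right)^{f_p}\prod_{p\mid d}\chi_p(-1)=\left(\frac{d}{-1}\right)\prod_{p\mid N_E}\left(\frac{d}{p}\right)^{f_p}=\left(\frac{d}{-N_E}\right).\]
The last step uses $f_p=\ord_p(N_E)$ and the complete multiplicativity of the Kronecker symbol in its lower argument.
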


We can extend part (2) of the previous lemma using induction, as follows.

\begin{corollary}\label{cor-rootnumberoftwistproduct}
Suppose $E$ is an elliptic curve over $\Q$, with conductor $N_E$. If $d_1,\ldots,d_n$ are fundamental discriminants, pairwise relatively prime and relatively prime to $N_E$, and $D=\prod_{k=1}^nd_k$, then
\[W(E^D)=W(E)^{n-1}\prod_{k=1}^nW(E^{d_k})=W(E)\prod_{k=1}^n\left(\frac{d_k}{-N_E}\right)=W(E)\left(\frac{D}{-N_E}\right).\]
\end{corollary}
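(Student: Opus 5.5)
The plan is to argue by induction on $n$. The base case $n=1$ is trivial, since then $D=d_1$ and $W(E)^0W(E^{d_1})=W(E^{d_1})$. For the inductive step we set $D'=\prod_{k=1}^{n-1}d_k$ and $D=D'd_n$.

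The first thing to check is that $D'$ is again a fundamental discriminant, so that Lemma~\ref{kronlemma}(2) can be applied to the pair $D',d_n$. Pairwise coprimality of the $d_k$ means at most one of them is even, i.e.\ of the form $4d'$. Each odd fundamental discriminant is $\equiv 1 \bmod 4$ and square-free, so a product of pairwise coprime odd ones is square-free and $\equiv 1\bmod 4$. If one factor is $4d'$ with $d'\equiv 2,3\bmod 4$, then multiplying $d'$ by an odd square-free number $\equiv1\bmod4$ that is coprime to it preserves both square-freeness and the residue class $2$ or $3\bmod 4$. Hence $D'$ is a fundamental discriminant. It is coprime to $N_E$ (each $d_k$ is) and coprime to $d_n$ (by pairwise coprimality).

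With this in hand, Lemma~\ref{kronlemma}(2) gives $W(E^D)=W(E^{D'})W(E^{d_n})W(E)$. The inductive hypothesis gives $W(E^{D'})=W(E)^{n-2}\prod_{k\le n-1}W(E^{d_k})$. Substituting, we get $W(E^D)=W(E)^{n-1}\prod_{k=1}^nW(E^{d_k})$, which is the first equality.

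For the second equality we use Lemma~\ref{kronlemma}(1) to write $W(E^{d_k})=W(E)\left(\frac{d_k}{-N_E}\right)$. Then
\[W(E)^{n-1}\prod_k W(E^{d_k})=W(E)^{2n-1}\prod_k\left(\frac{d_k}{-N_E}\right)=W(E)\prod_k\left(\frac{d_k}{-N_E}\right),\]
using $W(E)^2=1$.

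The third equality is multiplicativity of the Kronecker symbol in its top argument: $\left(\frac{ab}{m}\right)=\left(\frac ab\right)$-style identities give $\left(\frac{a}{m}\right)\left(\frac{b}{m}\right)=\left(\frac{ab}{m}\right)$ for fixed $m=-N_E\neq0$. This holds by definition, since the symbol is built prime by prime from Legendre symbols, together with the factors $\left(\frac{\cdot}{-1}\right)$ and $\left(\frac{\cdot}{2}\right)$, each of which is multiplicative. Alternatively, and avoiding any care about the Kronecker symbol at $2$ when $N_E$ is even, we can apply Lemma~\ref{kronlemma}(1) directly to the fundamental discriminant $D$, which is coprime to $N_E$, to get $W(E^D)=W(E)\left(\frac{D}{-N_E}\right)$.

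The only real obstacle is confirming that partial products remain fundamental discriminants; everything else is formal.
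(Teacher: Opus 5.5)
Your proposal is correct and follows essentially the paper's induction: apply Lemma~\ref{kronlemma}(2) to the partial product $D'$ and the last factor, use the inductive hypothesis, then Lemma~\ref{kronlemma}(1) with $W(E)^2=1$ and multiplicativity of $\left(\frac{\cdot}{-N_E}\right)$ in the numerator. The one thing you add is an explicit check that $D'$ (and $D$) is again a fundamental discriminant coprime to $N_E$ and to $d_n$, which the paper uses tacitly; your argument for it is right, and the stray ``$\left(\frac{ab}{m}\right)=\left(\frac ab\right)$-style'' phrase is only a slip, since the identity you actually use is correct.
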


\begin{proof}
Let $E$ and $d_1,\ldots,d_n,D$ be as in the statement. If $n=2$, then Lemma~\ref{kronlemma} shows that
\[W(E^D)=W(E)W(E^{d_1})W(E^{d_2})=W(E)^3\left(\frac{d_1}{-N_E}\right)\left(\frac{d_2}{-N_E}\right)=W(E)\left(\frac{D}{-N_E}\right).\]
Now suppose the result is settled for any $n>1$ values of $d_k$, and let $d_1,\ldots,d_{n+1}$ satisfy the same conditions, and put $D=d_1\cdots d_nd_{n+1}$ and $D'=d_1\cdots d_n$. Then:
\begin{align*}
W(E^D)&=W(E)W(E^{D'})W(E^{d_{n+1}})=W(E)\cdot W(E)^{n-1}\prod_{k=1}^nW(E^{d_k})\cdot W(E^{d_{n+1}})\\
&=W(E)^n\prod_{k=1}^{n+1}W(E^{d_k})=W(E)^{2n+1}\prod_{k=1}^{n+1}\left(\frac{d_k}{-N_E}\right)=W(E)\prod_{k=1}^{n+1}\left(\frac{d_k}{-N_E}\right)=W(E)\left(\frac{D}{-N_E}\right),
\end{align*}
which settles the induction step, and therefore the proof follows by induction.
\end{proof}

\subsection{Distribution of root numbers in quadratic twist families.} Next, we cite results of Rohrlich on the distribution of root numbers. We note that \cite[Theorem 1.1]{desjardins} extends it to the case $ab=0$: the quartic and sextic twist families.

\begin{theorem}[\cite{rohrlich}, Theorem 2]\label{thm-rohrlich-variation}
Consider the elliptic curve given by the equation $E:y^2=x^3+ax+b$ with $ab \ne 0$, and let $g(t)\in\Z[t],$ and the family of quadratic twists given by the equation
\[E^{g(t)}:y^2=x^3+ag(t)^2x+bg(t)^3.\]
Let $W_+=\{t\in\Q:g(t)\neq0,W(E^{g(t)})=1\}$ and define $W_-$ similarly. Then, one of the two properties holds:
\begin{enumerate}
\item The sets $W_+$ and $W_-$ are dense in $\R$.
\item The sets $W_+$ and $W_-$ are (in either order)
\[\{t\in\Q\mid g(t)<0\}\quad\text{and}\quad\{t\in\Q\mid g(t)>0\}.\]
\end{enumerate}
Moreover, there exists $g(t)$ such that (1) holds if and only if $E$ does not acquire everywhere good reduction over any abelian extension of $\Q$.
\end{theorem}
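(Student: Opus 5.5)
The plan is to reduce the global root number $W(E^{g(t)})$ to a function of finitely many local invariants of $g(t)$, then use weak approximation in $t$. Let $S$ be the finite set of places consisting of $\infty$, $2$, $3$, and the primes dividing $N_E$. For $t\in\Q$ with $g(t)\neq0$, write $g(t)=d\,m^2$ with $d$ square-free, so that $E^{g(t)}\cong E^d$ over $\Q$. Then $W(E^{g(t)})=\prod_v W_v(E^d)$.

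\emph{Step 1: primes outside $S$.} Take a prime $p\notin S$. If $p\nmid d$, then $E^d$ has good reduction at $p$ and $W_p=1$ by Proposition~\ref{prop-rohrlich}(b). If $p\mid d$, then $E^d$ has additive potentially good reduction with $e=2$, so $W_p=\left(\frac{-1}{p}\right)$ by Proposition~\ref{prop-rohrlich}(f). The product of $\left(\frac{-1}{p}\right)$ over such $p$ is a Hilbert-symbol product, and it can be rewritten by reciprocity. Namely, $\prod_{p\mid d,\,p\notin S}(-1,d)_p$ equals $\prod_{v\in S}(-1,d)_v^{-1}$ up to the trivially computable contributions. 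Hence
\[
W(E^{g(t)})=\prod_{v\in S}\omega_v\bigl(g(t)\bigr),
\]
where each $\omega_v$ is a function on $\Q_v^\times/\Q_v^{\times2}$ built from $W_v(E^{\delta})$ and $(-1,\delta)_v$. In particular $\omega_\infty$ depends only on $\sign g(t)$, and each $\omega_v$ is locally constant in $t\in\Q_v$ away from the roots of $g$. This step is a uniform version of Lemma~\ref{kronlemma}(1), which handles $d$ coprime to $N_E$.

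\emph{Step 2: dichotomy.} Suppose that for some finite $p\in S$ the function $t\mapsto\omega_p(g(t))$ is not constant on $\Q_p$. Then choose two $p$-adic open sets on which it takes opposite values. Given any real interval, weak approximation produces $t\in\Q$ in that interval, in either $p$-adic set, and $v$-adically close to fixed values at the other $v\in S\setminus\{p,\infty\}$. This gives both signs of $W$ densely in $\R$, which is case (1). Otherwise every finite $\omega_p$ is constant along the family, so $W(E^{g(t)})$ depends only on $\sign g(t)$. It remains to check that the two signs give opposite root numbers, which yields case (2). For this I would compare $d$ and $-d$: in the reciprocity product of Step 1, $\omega_\infty$ flips, while the finite factors are assumed unchanged.

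\emph{Step 3: the ``moreover'' part.} This amounts to deciding when some $\omega_p$ can be made non-constant by a suitable $g$. Since $g$ is at our disposal (e.g.\ $g$ with roots at chosen $p$-adic points), non-constancy is possible iff $\delta\mapsto W_p(E^\delta)(-1,\delta)_p$ is non-constant on $\Q_p^\times/\Q_p^{\times2}$ for some $p$. I would then run through Rohrlich's local formulas, together with Halberstadt and Rizzo at $p=2,3$. For potentially multiplicative reduction, twisting toggles between split and non-split multiplicative reduction, so the function varies. For potentially good reduction where the inertia image is abelian, the twist-dependence exactly cancels $(-1,\delta)_p$. For potentially good reduction with non-abelian inertia image, it does not cancel. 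This yields exactly the criterion ``$E$ acquires everywhere good reduction over no abelian extension.''

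\emph{Main obstacle.} The main obstacle is Step 3 at $p=2,3$, together with the sign-flip check in Step 2. Both need the full local root number tables, since Proposition~\ref{prop-rohrlich} does not cover these primes. I would handle them by expressing $W_p$ through the Weil--Deligne representation, $W_p(E^\delta)=W_p(\sigma\otimes\chi_\delta)$, and using the determinant formula when $\sigma$ is induced or abelian.
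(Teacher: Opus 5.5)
The paper gives no proof of this statement; it is quoted from Rohrlich, so your proposal has to stand on its own.

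\textbf{Steps 1 and 2 are correct.} For $p\nmid 6N_E$ one has $W_p(E^\delta)=(-1,\delta)_p$ for every $\delta$. Hilbert reciprocity then gives exactly $W(E^{g(t)})=\prod_{v\in S}\omega_v(g(t))$ with $\omega_v(\delta)=W_v(E^\delta)(-1,\delta)_v$, with no further correction terms. Weak approximation then yields the dichotomy.

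The sign flip you list as an obstacle needs no tables. It is automatic, since $\omega_\infty(\delta)=-\sign(\delta)$ and in the second branch the finite factors do not depend on $t$.

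Your reduction of the ``moreover'' part is also right: it suffices to decide whether some finite $p$ has $\omega_p$ non-constant on $\Q_p^\times/\Q_p^{\times2}$. Take $g(t)=t$ for one direction, and note that alternatives (1) and (2) are mutually exclusive.

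\textbf{The gap is the local criterion in Step 3, which is wrong.} For $p\geq5$ the inertia image is always cyclic. So your criterion would make $\omega_p$ constant at every such prime of potentially good reduction, and this fails.
\begin{itemize}
\item Take $e=4$ and $p\equiv3\pmod 4$. A ramified twist changes $\ord_p\Delta$ by $6$, so $e$ stays $4$. Part (f) of Proposition~\ref{prop-rohrlich} then gives $W_p(E^\delta)=\left(\frac{-2}{p}\right)$ for all $\delta$. But $(-1,p)_p=-1$, so $\omega_p(p)=-\omega_p(1)$.
\item Likewise $\omega_p(p)/\omega_p(1)=\left(\frac{-3}{p}\right)$ when $e\in\{3,6\}$.
\end{itemize}

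\textbf{The correct dividing line} is whether the whole local representation is abelian, not just inertia. $\omega_p$ should be constant exactly when $\sigma_{E,p}$ is a sum of two characters with $N=0$. Equivalently, $E$ acquires good reduction over an abelian extension of $\Q_p$. For $p\geq5$ with potentially good reduction, this means $e=2$ or $p\equiv1\pmod e$, because Frobenius acts on the cyclic inertia image by $x\mapsto x^p$.

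One direction is uniform in $p$ and needs no tables. If $\sigma_{E,p}=\chi\oplus\chi^{-1}$ (unitary normalization), then $W_p(E^\delta)=W(\chi\chi_\delta)W((\chi\chi_\delta)^{-1})=\chi(-1)\chi_\delta(-1)$. Since $\chi_\delta(-1)=(-1,\delta)_p$, this gives $\omega_p\equiv\chi(-1)$.

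For the other direction you must show $\omega_p$ is non-constant in two cases:
\begin{itemize}
\item the potentially multiplicative case, where your split/non-split toggle works because $(-1,u)_p=1$ for an unramified class $u$;
\item every case where $\sigma_{E,p}$ is irreducible.
\end{itemize}
For $p\geq5$ the irreducible case is the computation above; only at $p=2,3$ do the Halberstadt and Rizzo tables enter.

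\textbf{Your sketch also omits the local-to-global step.} Suppose that for every bad $p$ there is an abelian $L_p/\Q_p$ over which $E$ has good reduction. Local Kronecker--Weber gives $L_p\subseteq\Q_p(\zeta_{m_p})$. Then $E$ has good reduction everywhere over $\Q(\zeta_M)$ with $M=\prod_p m_p$. The converse follows by completing.
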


\begin{remark}\label{rem-bruteforce}
Let $E/\Q$ be an elliptic curve given by $y^2=f(x)$. Putting $g(t)=f(t)$ in Theorem~\ref{thm-rohrlich-variation}, one sees that the sets $W_\varepsilon$ are infinite (and dense in certain intervals), and therefore a ``brute force'' algorithm can easily find values $d$ equal to the square-free part of $f(t)$ for some appropriate values of $t\in\Q$ such that $E^d$ has the desired root number.
\end{remark}

Finally, we cite results on the existence of rank-0 and rank-1 twists. First we cite the known cases of the Birch and Swinnerton-Dyer conjecture.

\begin{theorem}[Gross--Zagier, Kolyvagin, Kolyvagin--Logachev \cite{grosszagier,kolyvagin,kolyvaginlogachev}]\label{thm-bsdrank01}
Let $E/\Q$ be an elliptic curve and let $L(E,s)$ be the Hasse--Weil $L$-function attached to $E$. Then:
\begin{enumerate}
\item If $\ord_{s=1}L(E,s)=0$, then $R_{E/\Q}=0$ and $\Sha(E/\Q)$ is finite.
\item If $\ord_{s=1}L(E,s)=1$, then $R_{E/\Q}=1$ and $\Sha(E/\Q)$ is finite,
\end{enumerate}
where $\Sha(E/\Q)$ denotes the Tate-Shafarevich group of $E$ over $\Q$.
\end{theorem}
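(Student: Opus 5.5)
The plan is the standard Heegner-point argument. First we use modularity of $E/\Q$ (Wiles, Taylor--Wiles, Breuil--Conrad--Diamond--Taylor). This gives a modular parametrization $\phi:X_0(N_E)\to E$, an analytic continuation of $L(E,s)$, and the functional equation with sign $W(E/\Q)$. Let $r_{an}=\ord_{s=1}L(E,s)\in\{0,1\}$. We choose an auxiliary imaginary quadratic field $K=\Q(\sqrt{-D})$ such that:
\begin{enumerate}
\item every prime $p\mid N_E$ splits in $K$ (the Heegner hypothesis), so that $W(E/K)=-1$;
\item $\ord_{s=1}L(E^{-D},s)$ equals $1-r_{an}$, so that $\ord_{s=1}L(E/K,s)=\ord_{s=1}L(E,s)+\ord_{s=1}L(E^{-D},s)=1$.
\end{enumerate}
The existence of such $D$ is the non-vanishing theorem for quadratic twists with prescribed local conditions (Bump--Friedberg--Hoffstein, Murty--Murty, Waldspurger). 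The local conditions force the root number of $E^{-D}$ to be $-W(E)$, which is exactly the parity needed in (2).

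Second, we construct the Heegner point $y_K=\operatorname{Tr}_{H/K}\phi(x_1)\in E(K)$ from a CM point $x_1\in X_0(N_E)$ with CM by $\mathcal{O}_K$, defined over the Hilbert class field $H$. The Gross--Zagier formula gives
\[L'(E/K,1)=c\cdot\hat h(y_K)\]
with an explicit nonzero constant $c$. Since $\ord_{s=1}L(E/K,s)=1$, the point $y_K$ has infinite order. The action of complex conjugation shows that $y_K$ lies in the $-W(E)$-eigenspace, that is, essentially in $E(\Q)$ if $r_{an}=1$ and in $E^{-D}(\Q)$ if $r_{an}=0$.

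Third, and this is the main obstacle, we run Kolyvagin's Euler system of Heegner points over ring class fields of conductor $\ell_1\cdots\ell_k$, for suitable Kolyvagin primes $\ell_i$. From this system we build cohomology classes. Combined with Tate local duality and Chebotarev, they show that whenever $y_K$ has infinite order, $E(K)$ has rank $1$ and $\Sha(E/K)$ is finite; Kolyvagin--Logachev handle the general case. The first thing I would do is bound $\Sel_p(E/K)$ for all $p$ not dividing an explicit integer, with the index $[E(K):\Z y_K]$ controlling the exponent. A separate argument is needed for the finitely many remaining $p$, which gives finiteness of $\Sha(E/K)[p^\infty]$ for each $p$.

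Finally we descend to $\Q$. We have $E(K)\otimes\Q=(E(\Q)\oplus E^{-D}(\Q))\otimes\Q$, and $\Sha(E/\Q)$, $\Sha(E^{-D}/\Q)$ inject into $\Sha(E/K)$ up to $2$-primary kernels, which are finite. Since $\rank E(K)=1$ is accounted for by the eigenspace containing $y_K$, the twist with analytic rank $0$ has algebraic rank $0$. This gives $R_{E/\Q}=r_{an}$ in both cases (1) and (2), and $\Sha(E/\Q)$ is finite.
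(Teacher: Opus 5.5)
Your outline is correct and is the standard Gross--Zagier--Kolyvagin argument that the paper cites without proof. It runs through the Heegner hypothesis forcing $W(E^{-D})=-W(E)$, Gross--Zagier giving $y_K$ of infinite order in the $-W(E)$-eigenspace, Kolyvagin's Euler system over $K$, and descent to $\Q$ using that $\ker(\Sha(E/\Q)\to\Sha(E/K))$ is finite and killed by $2$. Your sketch also makes explicit two inputs the paper's attribution leaves implicit: modularity, and the Bump--Friedberg--Hoffstein/Murty--Murty nonvanishing needed to choose $K$ with $\ord_{s=1}L(E^{-D},s)=1-\ord_{s=1}L(E,s)$. The delicate step for the finitely many non-generic primes $p$ is deferred to Kolyvagin's work, just as the paper defers the whole theorem to the literature.
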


Now we can state the theorems that show the existence of infinitely many quadratic twists of rank 0 and 1.

\begin{theorem}[Friedberg--Hoffstein, Kolyvagin--Logachev \cite{friedberghoffstein,kolyvaginlogachev}]\label{thm-twistsrank0}
Let $E/\Q$ be an elliptic curve. Then, there exist infinitely many fundamental discriminants $d$ such that
\[L(E^d,1)\neq0.\]
Hence, $R_{E^d/\Q}=0$ for infinitely many quadratic twists $E^d$ of $E$.
\end{theorem}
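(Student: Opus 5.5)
This theorem is cited from the literature (Friedberg--Hoffstein for non-vanishing, Kolyvagin--Logachev for the rank conclusion), so the plan below is to assemble those two inputs.

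\emph{Step 1 (arrange root number $+1$).} First produce infinitely many fundamental discriminants $d$ with $W(E^d)=+1$. By Lemma~\ref{kronlemma}(1), for $d$ coprime to $N_E$ we have $W(E^d)=W(E)\left(\frac{d}{-N_E}\right)$. This sign depends only on the class of $d$ modulo $4N_E$. Dirichlet's theorem on primes in progressions then gives infinitely many prime (hence fundamental, after adjusting to $d\equiv 1\bmod 4$ or using $-p$) discriminants in a residue class where this sign is $+1$. Taking such classes as the local conditions in Step~2 guarantees that the functional equation does not force $L(E^d,1)=0$.

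\emph{Step 2 (analytic non-vanishing).} Apply the Friedberg--Hoffstein theorem. It uses Waldspurger-type metaplectic Eisenstein series and double Dirichlet series to study $\sum_d L(E^d,1)\,a(d)\,|d|^{-w}$ over $d$ in a fixed class modulo $4N_E$ with $W(E^d)=+1$. The key input is that this series has a pole (or a nonzero residue) at the appropriate point. Consequently, $L(E^d,1)\neq 0$ for infinitely many $d$ in that class. This step is the main obstacle: it is the deep analytic input, and I would simply quote it rather than reprove it.

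\emph{Step 3 (algebraic conclusion).} For each such $d$, $E^d$ is modular, so $\ord_{s=1}L(E^d,s)=0$. Theorem~\ref{thm-bsdrank01}(1), i.e.\ Kolyvagin--Logachev's theorem (using a non-vanishing auxiliary twist of order one via Bump--Friedberg--Hoffstein or Murty--Murty to produce a Heegner system), gives $R_{E^d/\Q}=0$.

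Distinct fundamental discriminants give non-isomorphic twists over $\Q$, so infinitely many rank-zero quadratic twists exist.
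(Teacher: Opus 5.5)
Your proposal is correct and follows essentially the paper's route: the paper gives no proof of this cited result, and its ``Hence'' is exactly your Steps 2--3, namely Friedberg--Hoffstein non-vanishing followed by the rank-zero case of Gross--Zagier--Kolyvagin (Theorem~\ref{thm-bsdrank01}(1)). Two small inaccuracies do not affect the conclusion. First, Step 1 is not needed, since any $d$ with $L(E^d,1)\neq0$ automatically has $W(E^d)=+1$, and its claim that the sign depends only on $d\bmod 4N_E$ is false as stated, because $\left(\frac{d}{-N_E}\right)=\sign(d)\left(\frac{d}{N_E}\right)$ also depends on the sign of $d$. Second, for $j=1728$ one has $E^{d}\cong E^{-d}$, so distinct fundamental discriminants can give isomorphic twists, though at most two per isomorphism class.
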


\begin{theorem}[Bump--Friedberg--Hoffstein, Murty--Murty, Gross--Zagier, and Kolyvagin \cite{BFH,murtymurty,grosszagier,kolyvagin}]\label{thm-twistsrank1}
Let $E/\Q$ be an elliptic curve. Then, there exist infinitely many fundamental discriminants $d$ such that
\[\ord_{s=1}L(E^d,s)=1.\]
Hence, $\rank E^d(\Q)=1$ for infinitely many quadratic twists $E^d$ of $E$.
\end{theorem}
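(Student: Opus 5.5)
This statement is quoted from the literature, so I would not reprove it from scratch. Instead I would assemble it from two inputs: an analytic non-vanishing result for derivatives of twisted $L$-functions, followed by Theorem~\ref{thm-bsdrank01}(2). Modularity of $E/\Q$ (Wiles, Breuil--Conrad--Diamond--Taylor) makes $L(E^d,s)$ the $L$-function of the twisted newform $f\otimes\chi_d$. So all the analytic objects below are automorphic and have analytic continuation and functional equations.

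\textbf{Step 1 (fixing the root number).} Restrict to fundamental discriminants $d$ with $\gcd(d,N_E)=1$ lying in a fixed residue class modulo $4N_E$, chosen so that $\left(\frac{d}{-N_E}\right)=-W(E)$. By Lemma~\ref{kronlemma}(1) every such $d$ has $W(E^d)=-1$. Hence $L(E^d,1)=0$ by the functional equation, and $\ord_{s=1}L(E^d,s)$ is odd. Such a class exists, since the character $d\mapsto\left(\frac{d}{-N_E}\right)$ is non-trivial on the relevant fundamental discriminants. Optionally one may also impose a sign on $d$, or a Heegner-type condition at the primes dividing $N_E$, to align with Gross--Zagier.

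\textbf{Step 2 (non-vanishing of $L'(E^d,1)$).} This is the main obstacle. Following Bump--Friedberg--Hoffstein, consider the weighted average
\[
S(X)=\sum_{d} L'(E^d,1)\,\phi(|d|/X)
\]
over $d$ in the chosen class, where $\phi$ is a fixed smooth test function. Write it as a residue of a double Dirichlet series $Z(s,w)=\sum_d L(E^d,s)\,|d|^{-w}$, or, as in Murty--Murty, apply an approximate functional equation to $L'(E^d,1)$ and sum over $d$ with a large sieve or character-sum bound. The goal is an asymptotic $S(X)=cX\log X+O(X)$ with $c\neq 0$. The hardest part will be controlling the off-diagonal terms, meaning the sums of $\chi_d(n)$ over $d$ for non-square $n$, well enough to isolate the main term. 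Because the main term grows without bound, this forces $L'(E^d,1)\neq 0$ for infinitely many $d$, in fact for $\gg X/\log X$ of the $|d|\le X$ if one also bounds the second moment.

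\textbf{Step 3 (conclusion).} For each such $d$ we have $\ord_{s=1}L(E^d,s)=1$. Theorem~\ref{thm-bsdrank01}(2), which rests on Gross--Zagier Heegner points together with Kolyvagin's Euler system, then gives $\rank E^d(\Q)=1$ and $\Sha(E^d/\Q)$ finite. Distinct fundamental discriminants give distinct square classes, so this yields infinitely many quadratic twists of rank exactly one.
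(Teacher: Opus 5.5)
Your proposal is correct and follows the route the paper's citation encodes. The paper gives no independent proof; it attributes the statement to the analytic non-vanishing of $L'(E^d,1)$ in a family of twists with $W(E^d)=-1$ (Bump--Friedberg--Hoffstein, Murty--Murty), combined with Theorem~\ref{thm-bsdrank01}(2).

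One small tightening is needed in Step 1: imposing the sign of $d$ is not optional. Since $\left(\frac{d}{-N_E}\right)=\sign(d)\left(\frac{d}{N_E}\right)$ and a residue class modulo $4N_E$ fixes only the second factor, you must restrict to $d$ of a fixed sign (as well as a fixed class) to force $W(E^d)=-1$.
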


To conclude this section, we give some background for and cite the sometimes-called $p$-parity theorem. For an elliptic curve $E/\Q$ and a prime $p$, let $\Sel_{p^\infty}(E/\Q)$ be the $p^\infty$-Selmer group of $E/\Q$, and let
\[s_p(E/\Q):=\corank_{\Z_p}\Sel_{p^\infty}(E/\Q)\]
be its $\Z_p$-corank, the \emph{$p^\infty$-Selmer rank} of $E/\Q$. The Kummer map and the definition of $\Sha(E/\Q)$ give the exact sequence
\begin{equation}\label{eq-kummer}
0\to E(\Q)\otimes\Q_p/\Z_p\to\Sel_{p^\infty}(E/\Q)\to\Sha(E/\Q)[p^\infty]\to0,
\end{equation}
and therefore
\begin{equation}\label{eq-selmer-rank}
s_p(E/\Q)=R_{E/\Q}+\corank_{\Z_p}\Sha(E/\Q)[p^\infty].
\end{equation}
In particular $s_p(E/\Q)\geq R_{E/\Q}$, with equality if $\Sha(E/\Q)[p^\infty]$ is finite.

\begin{theorem}[$p$-parity over $\Q$; Monsky \cite{monsky} for $p=2$, Dokchitser--Dokchitser \cite{DDannals} for all $p$]\label{thm-pparity}
Let $E/\Q$ be an elliptic curve and let $p$ be a prime. Then
\[(-1)^{s_p(E/\Q)}=W(E/\Q).\]
\end{theorem}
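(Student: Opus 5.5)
This is the $p$-parity theorem of Monsky and Dokchitser--Dokchitser, and we take it as a black box in the rest of the paper. If we had to prove it, we would first reduce to a statement about Selmer groups with $p$-torsion coefficients. The Cassels--Tate pairing is alternating and non-degenerate on $\Sha(E/\Q)[p^\infty]$ modulo its divisible part. Hence the non-divisible part of $\Sha$ is a square, and $s_p(E/\Q)$ has the same parity as $\dim_{\mathbb{F}_p}\Sel_p(E/\Q)-\dim_{\mathbb{F}_p}E(\Q)[p]$, because the divisible part of $\Sha[p^\infty]$ contributes exactly its corank. So it suffices to show that the parity of this $\mathbb{F}_p$-dimension is a product of local terms, and then to match those local terms with the local root numbers $W_v(E/\Q)$ of Proposition~\ref{prop-rohrlich}.

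\textbf{The case of an isogeny.} When $E$ admits a $\Q$-rational $p$-isogeny $\phi:E\to E'$, we would use Cassels' formula for the ratio of Selmer groups of $\phi$ and its dual $\hat\phi$. This expresses $(-1)^{s_p(E/\Q)}$ as a product over places of $(-1)^{\ord_p(c_v)}$, where the local factor $c_v=\#\operatorname{coker}\bigl(\phi:E(\Q_v)\to E'(\Q_v)\bigr)/\#\ker$ compares Tamagawa numbers, real components and the normalised local kernel/cokernel indices. For $p=2$ and in general, one then checks place by place that $(-1)^{\ord_p(c_v)}$ agrees with $W_v(E)$ up to a global Artin-symbol factor which is trivial by the product formula for Hilbert symbols. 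The checks split by reduction type: good, multiplicative and additive, with separate analysis at $v=p$ and $v=\infty$.

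\textbf{Removing the isogeny hypothesis.} This is the main obstacle. Here we would follow the Dokchitsers' strategy. Choose an auxiliary Galois extension $F/\Q$ over which $E$ acquires a $p$-isogeny; for instance, take $F$ so that $\Gal(F/\Q)$ acts through a dihedral-type quotient of the image of $\Gal(\overline{\Q}/\Q)$ on $E[p]$, such as the field cut out by a Borel subgroup. Then apply the isogeny case over the intermediate fields of $F/\Q$. A Brauer relation between permutation representations of $\Gal(F/\Q)$ expresses the parity of the multiplicity of the trivial character in the Selmer representation, that is, $s_p(E/\Q)$, via Selmer ranks over subfields where an isogeny exists. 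On the analytic side, the same relation with regulator constants identifies the corresponding product of root numbers with $W(E/\Q)$. The delicate point is the local compatibility of Tamagawa quotients with root numbers over arbitrary number fields at primes above $2$, $3$ and $p$. For that we would rely on explicit local computations together with a deformation argument: approximate $E$ $v$-adically by a curve with a global $p$-isogeny, so that the local factors are unchanged.
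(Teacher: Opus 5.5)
\textbf{Relation to the paper.} The paper gives no proof of this statement. It is quoted from Monsky and Dokchitser--Dokchitser and used only as an input (e.g.\ in Corollary~\ref{cor-pparity-sha} and Lemma~\ref{lem:multiquadratic-rank}), so treating it as a black box is exactly what the paper does. Your first two steps are standard and correctly described: the Cassels--Tate reduction to the parity of $\dim_{\mathbb{F}_p}\Sel_p(E/\Q)-\dim_{\mathbb{F}_p}E(\Q)[p]$, and, in the isogeny case, Cassels' formula followed by a place-by-place comparison with $W_v$.

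\textbf{The gap in your third step.} If the outline is meant as a proof, the step ``removing the isogeny hypothesis'' does not work as described. A Brauer relation $n_G\mathbf{1}+\sum_i n_i\,\mathrm{Ind}_{H_i}^G\mathbf{1}=0$ gives the exact identity $\sum_i n_i\,s_p(E/F^{H_i})=-n_G\,s_p(E/\Q)$, and Artin formalism gives the matching identity of root numbers automatically. So if $\Q$ appeared with odd coefficient and all other fields carried a $p$-isogeny, parity would transfer to $\Q$ for free. But such a relation does not exist in general:
\begin{itemize}
\item Suppose the mod-$p$ image is $GL_2(\mathbb{F}_p)$, and pick $g\in\Gal(\overline{\Q}/\Q)$ acting on $E[p]$ without an eigenvector, e.g.\ a generator of a non-split Cartan subgroup.
\item If $E$ has a $p$-isogeny over $F^{H}$, no conjugate of $g$ restricts into $H$. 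Hence the permutation character of $G/H$ vanishes at $g|_F$.
\item Evaluating the relation at $g|_F$ forces $n_G=0$.
\end{itemize}
Concretely, $\mathrm{Ind}_B^G\mathbf{1}=\mathbf{1}\oplus\mathrm{St}$. So the isogeny case over $F^{B}$ gives $p$-parity over $\Q$ only together with a separate parity statement for the multiplicity of the Steinberg representation in the Selmer representation.

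Producing such $\tau$-parity statements is the real content of the Dokchitsers' work, and your outline does not supply it. Regulator constants enter on the Selmer/Tamagawa side, not the root-number side, where the relation is automatic. One must then show that the resulting products of local Tamagawa-type terms equal the twisted local root numbers $W_v(E,\tau)$.

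\textbf{Smaller inaccuracies.}
\begin{itemize}
\item The fixed field of a Borel subgroup is not Galois over $\Q$.
\item For $p\ge5$, every quotient of $GL_2(\mathbb{F}_p)$ is cyclic or non-solvable, so there is no ``dihedral-type quotient'' to use.
\item For $p=2$, where $GL_2(\mathbb{F}_2)\cong S_3$ is dihedral, the $S_3$ Brauer relation has regulator constant $3$ modulo squares on every irreducible. It therefore carries no information about $2$-Selmer parity, so this route does not recover Monsky's case either.
\end{itemize}
For the purposes of this paper, the citation is the proof, and that part of your proposal matches it.
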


\begin{corollary}\label{cor-pparity-sha}
Let $E/\Q$ be an elliptic curve, and suppose that $\Sha(E/\Q)[p^\infty]$ is finite for some prime $p$. Then, $s_p(E/\Q)=R_{E/\Q}$ and the Parity Conjecture holds for $E$, that is, $W(E/\Q)=(-1)^{R_{E/\Q}}$.
\end{corollary}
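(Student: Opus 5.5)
The plan is to combine the Kummer sequence \eqref{eq-kummer}, or more precisely its corank consequence \eqref{eq-selmer-rank}, with the $p$-parity theorem (Theorem~\ref{thm-pparity}).

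First, fix a prime $p$ with $\Sha(E/\Q)[p^\infty]$ finite. A finite $p$-primary group is annihilated by some $p^m$. Its Pontryagin dual is then a finite $\Z_p$-module, so its $\Z_p$-corank is $0$. Hence $\corank_{\Z_p}\Sha(E/\Q)[p^\infty]=0$.

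Next, make sure \eqref{eq-selmer-rank} is justified in this generality. The term $E(\Q)\otimes\Q_p/\Z_p$ is isomorphic to $(\Q_p/\Z_p)^{R_{E/\Q}}$. This holds because $E(\Q)\cong E(\Q)_\tors\times\Z^{R_{E/\Q}}$ and the torsion part dies after tensoring with the divisible group $\Q_p/\Z_p$; so this term has corank $R_{E/\Q}$. Coranks are additive in short exact sequences of cofinitely generated $\Z_p$-modules, because Pontryagin duality is exact and $\Z_p$-rank is additive. Applying this to \eqref{eq-kummer} gives $s_p(E/\Q)=R_{E/\Q}+0=R_{E/\Q}$.

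Finally, Theorem~\ref{thm-pparity} gives $(-1)^{s_p(E/\Q)}=W(E/\Q)$. Substituting $s_p(E/\Q)=R_{E/\Q}$ yields $W(E/\Q)=(-1)^{R_{E/\Q}}$, which is the Parity Conjecture for $E$.

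There is no real obstacle. The only point needing care is the additivity of coranks in \eqref{eq-kummer}, which requires $\Sel_{p^\infty}(E/\Q)$ to be cofinitely generated. That follows from the finiteness of $\Sel_p(E/\Q)$, and the paper already implicitly uses it in stating \eqref{eq-selmer-rank}.
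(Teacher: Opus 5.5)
Your proposal is correct and takes the same route as the paper. Finiteness of $\Sha(E/\Q)[p^\infty]$ gives corank $0$, so $s_p(E/\Q)=R_{E/\Q}$ by \eqref{eq-selmer-rank}, and Theorem~\ref{thm-pparity} then yields $W(E/\Q)=(-1)^{R_{E/\Q}}$; your added justification of corank additivity in \eqref{eq-kummer}, via cofinite generation of $\Sel_{p^\infty}(E/\Q)$, is sound and is a detail the paper takes for granted.
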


\begin{proof}
If $\Sha(E/\Q)[p^\infty]$ is finite, its $\Z_p$-corank is 0, so $s_p(E/\Q)=R_{E/\Q}$ by Eq.~\eqref{eq-selmer-rank}. The result now follows from Theorem~\ref{thm-pparity}.
\end{proof}

\section{Proofs}\label{sec-proofs}

We are ready to present the proofs of the main theorems discussed in the introduction. We begin with a Corollary of Theorem~\ref{thm-fujita} that complements Theorem~\ref{thm-enrique} for multi-quadratic fields (it does not bound the number of such fields) and allows us to control the growth of torsion in multi-quadratic extensions.

\begin{corollary}\label{cor-fujita}
Let $E/\Q$ be an elliptic curve with conductor $N_E$. Then, there exists a finite multi-quadratic extension $F_E$ of $\Q$ such that if $L$ is another multi-quadratic extension of $\Q$ with $L\cap F_E=\Q$, then $E(L)_{tors}=E(\Q)_{tors}$. Moreover, $F_E=\Q(E[5040])\cap\Q(2)$ and $F_E$ is only ramified at primes dividing $2\cdot3\cdot5\cdot7\cdot N_E$.
\end{corollary}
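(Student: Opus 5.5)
We want a multi-quadratic field $F_E$ such that torsion cannot grow in any multi-quadratic $L$ with $L\cap F_E=\Q$. The strategy is to bound the exponent of $E(\Q(2))_\tors$ using Theorem~\ref{thm-fujita}, and then show that any new torsion point over $L$ already lives in a subfield of $L$ that must meet $F_E$ nontrivially.

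First, every group in the list of Theorem~\ref{thm-fujita} has exponent dividing one of $15, 2N$ with $N\le 8$ and $N\neq 7$, $4N$ with $N\le 4$, $6$, or $8$. Each of these divides $5040=2^4\cdot3^2\cdot5\cdot7$; the only primes involved are $2,3,5$, and $7$ appears only harmlessly. Hence every torsion point of $E$ defined over $\Q(2)$ lies in $E[5040]$, and so
\[E(\Q(2))_\tors\subseteq E(\Q(E[5040])\cap\Q(2)).\]
Put $F_E=\Q(E[5040])\cap\Q(2)$. Since $\Q(E[5040])/\Q$ is a finite Galois extension, $F_E$ is a finite subextension of $\Q(2)$, hence a finite multi-quadratic field.

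Next, let $L\subseteq\Q(2)$ be multi-quadratic with $L\cap F_E=\Q$, and take $P\in E(L)_\tors$. Then $P\in E[5040]$ by the first step. The field $\Q(P)$ satisfies
\[\Q(P)\subseteq L\cap\Q(E[5040])\subseteq L\cap\Q(2)\cap\Q(E[5040])=L\cap F_E=\Q,\]
so $P\in E(\Q)$. This gives $E(L)_\tors=E(\Q)_\tors$. The only point needing care here is that $\Q(P)$ is indeed contained in $\Q(E[5040])$, which is clear since the coordinates of $P$ lie in $\Q(E[5040])$.

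Finally, for ramification: by Néron–Ogg–Shafarevich, $\Q(E[m])$ is unramified outside $mN_E$. With $m=5040$, this means it is unramified outside the primes dividing $2\cdot3\cdot5\cdot7\cdot N_E$, and the subfield $F_E$ inherits this property.

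The main obstacle is the exponent bookkeeping in the first step. Since everything rests on Theorem~\ref{thm-fujita}, this amounts to a finite check over the twenty groups in the list.
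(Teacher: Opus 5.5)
Your argument is correct and is essentially the paper's proof: you bound the exponent of $E(\Q(2))_\tors$ by $5040$ via Theorem~\ref{thm-fujita}, use $L\subseteq\Q(2)$ to get $\Q(P)\subseteq L\cap\Q(E[5040])=L\cap F_E=\Q$, and invoke N\'eron--Ogg--Shafarevich for the ramification claim. One small slip in the bookkeeping is that your list of exponents omits $7$ and $9$ (from $\Z/7\Z$ and $\Z/9\Z$), which divide none of the numbers you list; they do divide $5040=2^4\cdot3^2\cdot5\cdot7$, so the conclusion is unaffected.
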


\begin{proof}
Let $E/\Q$ be an elliptic curve and let $M\geq1$ be an integer such that any of the groups in Theorem~\ref{thm-fujita} is a subgroup of $\Z/M\Z\times\Z/M\Z$. One can in fact verify that $M=5040=16\cdot9\cdot5\cdot7$ works. Now let $F=\Q(E[M])$ and let $F(2)=F\cap\Q(2)$ be the maximal 2-elementary abelian extension of $\Q$ contained in $F$.

Now suppose $L$ is a multi-quadratic extension of $\Q$ (i.e., $L\subseteq\Q(2)$), and suppose that $L\cap F(2)=\Q$. Since $L$ itself is multi-quadratic, this also implies that $L\cap F=\Q$. Then, we claim that $E(L)_\tors=E(\Q)_\tors$. Indeed, by Theorem~\ref{thm-fujita} we must have that $E(L)_\tors\subseteq E(\Q(2))_\tors\subseteq E[M]$ by construction of $M$. Suppose for a contradiction that $E(\Q)_\tors\subsetneq E(L)_\tors$. Then, $P\in E[M]$ such that $P\notin E(\Q)$ but $P\in E(L)$, and so $\Q\subsetneq\Q(P)\subset L$. Thus, $\Q(P)\subseteq L\cap\Q(E[M])$. But we have assumed that $L\cap F=L\cap\Q(E[M])$ is trivial, and therefore $\Q(P)=\Q$, a contradiction.

Hence, we have shown that $E(L)_\tors=E(\Q)_\tors$, with $F_E=F(2)$. Note that by the criterion of N\'eron--Ogg--Shafarevich, the only primes that may ramify in $\Q(E[M])$ are divisors of $M\cdot N_E$, and therefore $F_E\subseteq\Q(E[M])$ can only ramify at primes dividing $2\cdot3\cdot5\cdot7\cdot N_E$ as claimed. This concludes the proof.
\end{proof}

We remark here that with more refined information about the $\ell$-adic images attached to the curve $E/\Q$ for $\ell=2,3,5,7$, one can effectively compute the field $F_E$ in Corollary~\ref{cor-fujita}.

Next, we prove the key lemma that the main results are deduced from.

\begin{lemma}\label{lem:multiquadratic-rank}
Let $E/\Q$ be an elliptic curve with conductor $N_E$, global root number $W(E)$ and rank $R_{E/\Q}$. Let $d_1,\ldots,d_n$ be square-free integers such that $L_n=\Q(\sqrt{d_1},\ldots,\sqrt{d_n})$ has degree $2^n$ over $\Q$, let $\mathcal{D}_n$ be as in Proposition~\ref{prop-kida}, and suppose that $R_{E^{d_k}/\Q}\geq1$ for every $k=1,\ldots,n$. Then:
\begin{enumerate}
\item $R_{E/L_n}\geq R_{E/\Q}+n$.
\end{enumerate}
Moreover, suppose that the discriminant $D_k$ of $K_k=\Q(\sqrt{d_k})$ is relatively prime to $N_E$ for every $k=1,\ldots,n$. Then:
\begin{enumerate}\setcounter{enumi}{1}
\item If $W(E)=-1$ and $W(E^{d_k})=1$ for every $k$, and the Parity Conjecture holds for $E^d$ for every $d\in\mathcal{D}_n$ with $d\neq1$, then
\[R_{E/L_n}\geq R_{E/\Q}+2n+2^{n-1}-1.\]
\item If $W(E)=-1$ and $W(E^{d_k})=-1$ for every $k$, then $W(E^d)=-1$ for every $d\in\mathcal{D}_n$, and either
\begin{enumerate}
\item $\ord_{s=1}L(E^d,s)=1$ for every $d\in\mathcal{D}_n$ with $d\neq1$, in which case
\[R_{E/L_n}=R_{E/\Q}+2^n-1,\]
or
\item $\ord_{s=1}L(E^d,s)>1$ for some $d\in\mathcal{D}_n$ with $d\neq1$. In this case, if the Parity Conjecture holds for $E^d$ for every $d\in\mathcal{D}_n$ with $d\neq1$, or if for every such $d$ there is a prime $p$ such that $\Sha(E^d/\Q)[p^\infty]$ is finite, then
\[R_{E/L_n}\geq R_{E/\Q}+2^n-1.\]
If BSD holds, then in fact $R_{E/L_n}\geq R_{E/\Q}+2^n+1$.
\end{enumerate}
\item Suppose $W(E)=1$. If the Parity Conjecture holds for $E^d$ for every $d\in\mathcal{D}_n$ with $d\neq1$, or if for every such $d$ there is a prime $p$ such that $\Sha(E^d/\Q)[p^\infty]$ is finite, then
\begin{enumerate}
\item if $W(E^{d_k})=1$ for every $k$, we have $R_{E/L_n}\geq R_{E/\Q}+2n$,
\item if $W(E^{d_k})=-1$ for every $k$, we have $R_{E/L_n}\geq R_{E/\Q}+2^{n-1}$, and
\item if $n\geq2$, $W(E^{d_k})=1$ for $1\leq k\leq n-1$ and $W(E^{d_n})=-1$, we have $R_{E/L_n}\geq R_{E/\Q}+2^{n-1}+2n-2$.
\end{enumerate}
\end{enumerate}
\end{lemma}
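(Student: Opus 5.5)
The plan is to combine the rank decomposition of Proposition~\ref{prop-kida} with a uniform formula for the root numbers $W(E^d)$, $d\in\mathcal{D}_n$, and then count subsets of $\{1,\ldots,n\}$. Write $d_S=\prod_{k\in S}d_k$ for $S\subseteq\{1,\ldots,n\}$. Since $[L_n:\Q]=2^n$, the $d_S$ are pairwise distinct modulo squares. Proposition~\ref{prop-kida} gives
\[R_{E/L_n}=R_{E/\Q}+\sum_{S\neq\emptyset}R_{E^{d_S}/\Q}.\]
Part (1) is immediate: the $n$ singleton terms are each $\geq1$ by hypothesis, and all other terms are $\geq0$.

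For parts (2)--(4) I first establish the root-number formula
\[W(E^{d_S})=W(E)\,\chi(d_S),\qquad \chi(m)=\left(\tfrac{m}{-N_E}\right).\]
Note that the $d_k$ are not assumed pairwise coprime, so Corollary~\ref{cor-rootnumberoftwistproduct} does not apply directly. Instead I will argue as follows.
\begin{itemize}
\item The square-free part of $d_S$ has only prime factors dividing some $D_k$, so its associated fundamental discriminant is coprime to $N_E$.
\item Lemma~\ref{kronlemma}(1) therefore gives $W(E^{d_S})=W(E)\chi(\text{disc})$.
\item The Kronecker symbol $\chi$ is a multiplicative character, trivial on squares coprime to $N_E$. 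It also takes the same value on $d_S$ and on the corresponding fundamental discriminant, because the factor $4$ is a square and $N_E$ is odd whenever some $D_k$ is even.
\end{itemize}
Hence $\chi(d_S)=\prod_{k\in S}\chi(d_k)$, and applying the formula to singletons gives $\chi(d_k)=W(E)W(E^{d_k})$. Verifying this carefully, especially the bookkeeping at $2$, is the step I expect to be the main obstacle. The rest is counting.

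For the counting I convert root numbers into rank bounds. Under the Parity Conjecture, or via Corollary~\ref{cor-pparity-sha} when some $\Sha(E^d/\Q)[p^\infty]$ is finite, $W(E^d)=-1$ forces $R_{E^d/\Q}\geq1$. Likewise $W(E^d)=+1$ together with $R_{E^d/\Q}\geq1$ (the singleton case) forces $R_{E^d/\Q}\geq2$.
\begin{itemize}
\item \textbf{Case (2).} Here $\chi(d_k)=-1$ for all $k$, so $W(E^{d_S})=-(-1)^{|S|}$.
 The $2^{n-1}-1$ nonempty even subsets have $W=-1$ and contribute $\geq1$ each.
 The $n$ singletons have $W=+1$ and contribute $\geq2$ each.
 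Together these give $2n+2^{n-1}-1$.
\item \textbf{Case (3).} Here $\chi(d_k)=1$, so every $W(E^{d_S})=-1$.
 If all analytic ranks equal $1$, then Theorem~\ref{thm-bsdrank01} gives each rank exactly $1$, and the total is exactly $2^n-1$.
 Otherwise parity (or $p$-parity with finite $\Sha$) gives each term $\geq1$.
 Under BSD, an odd analytic rank $>1$ is $\geq3$, which adds $2$ to the total.
\item \textbf{Case (4).} Here $W(E)=1$, so $\chi(d_k)=W(E^{d_k})$.
 In (a), every $W(E^{d_S})=1$, and the singletons give $\geq 2n$.
 In (b), $W(E^{d_S})=(-1)^{|S|}$, and the $2^{n-1}$ odd subsets give $\geq1$ each.
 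In (c), the $2^{n-1}$ subsets containing $n$ have $W=-1$ and give $\geq1$ each. The $n-1$ singletons $\{k\}$ with $k<n$ have $W=+1$ and rank $\geq1$, hence give $\geq2$ each. The total is $2^{n-1}+2n-2$.
\end{itemize}
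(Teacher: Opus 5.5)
Your proposal is correct and follows essentially the same route as the paper: Kida's decomposition, the root-number formula $W(E^{d_S})=W(E)^{|S|+1}\prod_{k\in S}W(E^{d_k})$ obtained from Lemma~\ref{kronlemma}(1) and the complete multiplicativity of $\left(\frac{\cdot}{-N_E}\right)$, and then the same subset counts in each case. The only difference is in the bookkeeping at $2$: the paper writes $\prod_{k\in S}D_k=Dm^2$, so $D\mid\prod_{k\in S}D_k$ is automatically prime to $N_E$. You instead need one small extra check, namely that if the fundamental discriminant of $d_S$ is $4d'$, then some $d_k\not\equiv1\bmod 4$, so some $D_k$ is even and hence $N_E$ is odd.
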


\begin{proof}
By Proposition~\ref{prop-kida},
\begin{equation}\label{eq-rank-sum}
R_{E/L_n}=\sum_{d\in\mathcal{D}_n}R_{E^d/\Q}=R_{E/\Q}+\sum_{d\in\mathcal{D}_n,\,d\neq1}R_{E^d/\Q},
\end{equation}
and all terms are non-negative. Since $[L_n:\Q]=2^n$, the elements of $\mathcal{D}_n$ represent $2^n$ distinct classes in $\Q^\times/(\Q^\times)^2$, and $d_1,\ldots,d_n$ are among them. Hence $R_{E/L_n}\geq R_{E/\Q}+\sum_{k=1}^nR_{E^{d_k}/\Q}\geq R_{E/\Q}+n$, which proves (1).

Now assume that $\gcd(D_k,N_E)=1$ for every $k$. We first compute the root numbers of all the twists $E^d$ with $d\in\mathcal{D}_n$. Let $d=\prod_{k\in S}d_k$ for some $S\subseteq\{1,\ldots,n\}$, and let $D$ be the discriminant of $\Q(\sqrt{d})$, so that $E^d\cong E^D$ over $\Q$. Then $\prod_{k\in S}D_k=D\,m^2$ for some integer $m$ whose prime factors divide $\prod_{k\in S}D_k$. In particular $D$ and $m$ are relatively prime to $N_E$. By part (1) of Lemma~\ref{kronlemma}, and since the Kronecker symbol $\left(\frac{\cdot}{-N_E}\right)$ is completely multiplicative in the numerator,
\begin{equation}\label{eq:rootnumber-product}
W(E^d)=W(E)\left(\frac{D}{-N_E}\right)=W(E)\left(\frac{Dm^2}{-N_E}\right)=W(E)\prod_{k\in S}\left(\frac{D_k}{-N_E}\right)=W(E)^{|S|+1}\prod_{k\in S}W(E^{d_k}).
\end{equation}
When the $D_k$ are pairwise relatively prime, this is Corollary~\ref{cor-rootnumberoftwistproduct}.

Now suppose that $W(E)=-1$ and $W(E^{d_k})=1$ for every $k$, as in (2). Then Eq.~\eqref{eq:rootnumber-product} gives $W(E^d)=(-1)^{|S|+1}$. By Conjecture~\ref{conj-parity} applied to each $E^d$ with $d\neq 1$:
\begin{itemize}
\item if $|S|=1$, then $R_{E^{d_k}/\Q}$ is even and, by hypothesis, at least 1, so $R_{E^{d_k}/\Q}\geq2$;
\item if $|S|\geq2$ is even, then $W(E^d)=-1$, so $R_{E^d/\Q}$ is odd and $R_{E^d/\Q}\geq1$. A simple combinatorial count shows that there are $2^{n-1}-1$ such subsets $S$;
\item if $|S|\geq3$ is odd, we only use $R_{E^d/\Q}\geq0$.
\end{itemize}
Substituting into Eq.~\eqref{eq-rank-sum}, we obtain $R_{E/L_n}\geq R_{E/\Q}+2n+2^{n-1}-1$, which proves (2).

To show (3), suppose that $W(E)=W(E^{d_k})=-1$ for every $k$. Then Eq.~\eqref{eq:rootnumber-product} gives $W(E^d)=-1$ for every $d\in\mathcal{D}_n$. Since $W(E^d)$ is the sign of the functional equation of $L(E^d,s)$, the order $\ord_{s=1}L(E^d,s)$ is odd, hence at least 1, for every $d\in\mathcal{D}_n$.

If $\ord_{s=1}L(E^d,s)=1$ for every $d\in\mathcal{D}_n$ with $d\neq1$, then part (2) of Theorem~\ref{thm-bsdrank01} gives $R_{E^d/\Q}=1$ for each of these $2^n-1$ twists. Thus $R_{E/L_n}=R_{E/\Q}+2^n-1$. This shows (3)(a).

For (3)(b), suppose first that the Parity Conjecture holds for every $E^d$ with $d\in\mathcal{D}_n$, $d\neq1$. Then for each such value $d\neq1$ in $\mathcal{D}_n$ we can conclude $R_{E^d/\Q}$ is odd, hence at least 1, and Eq.~\eqref{eq-rank-sum} gives $R_{E/L_n}\geq R_{E/\Q}+2^n-1$. Otherwise, if instead we have that for each $d\in\mathcal{D}_n$ with $d\neq1$ there is a prime $p$ such that $\Sha(E^d/\Q)[p^\infty]$ is finite, then Eq.~\eqref{eq-selmer-rank} gives $R_{E^d/\Q}=s_p(E^d/\Q)$, and the $p$-parity theorem (Theorem~\ref{thm-pparity}; see also Cor.~\ref{cor-pparity-sha}) gives $(-1)^{s_p(E^d/\Q)}=W(E^d)=-1$. So again $R_{E^d/\Q}$ is odd and at least 1, and the same bound follows. Finally, if BSD holds, then $R_{E^d/\Q}=\ord_{s=1}L(E^d,s)$ for every $d\neq1$. Each of these is odd and positive, and it is at least 3 for some $d_0\neq1$ by hypothesis. Hence
\[R_{E/L_n}\geq R_{E/\Q}+(2^n-2)+3=R_{E/\Q}+2^n+1.\]
This proves (3)(b).

Finally, to show (4), suppose that $W(E)=1$. In all three cases, for $d\in\mathcal{D}_n$ with $d\neq1$, the hypotheses imply that $(-1)^{R_{E^d/\Q}}=W(E^d)$: under Conjecture \ref{conj-parity} this is the conjecture itself, and if $\Sha(E^d/\Q)[p^\infty]$ is finite this is Corollary~\ref{cor-pparity-sha}.

For (4)(a), suppose $W(E^{d_k})=1$ for every $k$. Then Eq.~\eqref{eq:rootnumber-product} gives $W(E^d)=1$ for every $d\in\mathcal{D}_n$. In particular, each $R_{E^{d_k}/\Q}$ is even and, by hypothesis, at least 1, so $R_{E^{d_k}/\Q}\geq 2$ for $k=1,\ldots,n$. Using $R_{E^d/\Q}\geq0$ for the remaining $d$, Eq.~\eqref{eq-rank-sum} gives $R_{E/L_n}\geq R_{E/\Q}+2n$.

For (4)(b), suppose $W(E^{d_k})=-1$ for every $k$. Then Eq.~\eqref{eq:rootnumber-product} gives $W(E^d)=(-1)^{|S|}$ for $d=\prod_{k\in S}d_k$. Let $d\in\mathcal{D}_n$ with $|S|$ odd, so that $W(E^d)=-1$. Then $R_{E^d/\Q}$ is odd, hence at least 1. There are $2^{n-1}$ subsets $S\subseteq\{1,\ldots,n\}$ of odd size. If $|S|\geq2$ is even, then $W(E^d)=1$, and we only use $R_{E^d/\Q}\geq0$. Substituting into Eq.~\eqref{eq-rank-sum}, we obtain $R_{E/L_n}\geq R_{E/\Q}+2^{n-1}$, which proves (4)(b).

For (4)(c), suppose $n\geq2$, with $W(E^{d_k})=1$ for $k<n$ and $W(E^{d_n})=-1$. Since $W(E)=1$, Eq.~\eqref{eq:rootnumber-product} gives $W(E^d)=\prod_{k\in S}W(E^{d_k})$, which equals $-1$ if $n\in S$ and $+1$ if $n\notin S$, for $d=\prod_{k\in S}d_k$. There are $2^{n-1}$ subsets $S\subseteq\{1,\ldots,n\}$ with $n\in S$, all of them non-empty, and for each of them $R_{E^d/\Q}$ is odd, hence at least $1$. The $n-1$ singletons $S=\{k\}$ with $k<n$ do not contain $n$, so they are not among those subsets. For each of them $W(E^{d_k})=1$, so $R_{E^{d_k}/\Q}$ is even and, by hypothesis, at least $1$, hence $R_{E^{d_k}/\Q}\geq2$. Using $R_{E^d/\Q}\geq0$ for the remaining $d$, Eq.~\eqref{eq-rank-sum} gives $R_{E/L_n}\geq R_{E/\Q}+2^{n-1}+2(n-1)$. This proves (4) and concludes the proof of the lemma.
\end{proof}

\begin{remark}\label{rem-bsd}
In (3)(b), the Parity Conjecture (or the finiteness of $\Sha(E^d/\Q)[p^\infty]$) only shows that $R_{E^{d_0}/\Q}$ is odd. It does not rule out $R_{E^{d_0}/\Q}=1$ when $\ord_{s=1}L(E^{d_0},s)\geq3$, which is why the strict inequality requires BSD.
\end{remark}

Now our task is to construct twists of curves that fit the recipe laid out in Lemma~\ref{lem:multiquadratic-rank}.

\begin{lemma}\label{findgoodvaluelemma}
Let $f(x)=x^3+Ax+B$ with $A,B\in\Z$, let $N=p_1\cdots p_n$ be a product of distinct primes $p_1<p_2<\ldots<p_n$, let $q=q_1\cdots q_m$ be a product of distinct odd primes not dividing $N$, with $m\geq1$, and let $x_0=x_1/(qN)^2$ with $x_1\in\Z$ such that
\[x_1\equiv\begin{cases}1\bmod 8&\text{if }p_1=2,\\ s_i\bmod p_i&\text{if }p_i>2,\\ t_j\bmod q_j,\end{cases}\]
where, for $1\leq i\leq n$ and $1\leq j\leq m$, each $s_i$ is a fixed nonzero quadratic residue mod $p_i$, and $t_j$ is a fixed quadratic non-residue mod $q_j$. Let $d_0$ be the square-free part of $f(x_0)$, that is, $f(x_0)=d_0\cdot a^2$, with $d_0\in\Z$ and some $a\in\Q$. Then, the Kronecker symbol of $d_0$ satisfies
\[\left(\frac{d_0}{p}\right)=\begin{cases}1&\text{for }p=p_i,\text{ and }1\leq i\leq n,\\-1&\text{for }p=q_j.\end{cases}\]
Moreover, $d_0$ is not a square in $\Q$, the integer $x_1$ can be chosen so that $d_0$ is either positive or negative, and if $N$ is even, then $d_0\equiv1\bmod 8$. More precisely, if $|x_1|\geq(|A|+|B|+1)(qN)^2$, then $\sign(d_0)=\sign(x_1)$.
\end{lemma}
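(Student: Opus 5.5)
The plan is to clear denominators and read off the residue symbols from the leading term. Write $M=qN$ and $x_0=x_1/M^2$. Then
\[f(x_0)=\frac{x_1^3+AM^4x_1+BM^6}{M^6},\]
so, since $M^6$ is a square, $d_0$ is the square-free part of the integer $g=x_1^3+AM^4x_1+BM^6$. For every prime $p\mid M$ we have $g\equiv x_1^3\pmod{p^4}$. For odd $p$ we have $p\nmid x_1$ (because $s_i,t_j\neq 0$), so $\ord_p(g)=0$. Hence $p\nmid d_0$, and $g=d_0c^2$ with $p\nmid c$. The Legendre symbols then give
\[\left(\frac{d_0}{p}\right)=\left(\frac{g}{p}\right)=\left(\frac{x_1}{p}\right)^3=\left(\frac{x_1}{p}\right),\]
which equals $1$ for $p=p_i$ and $-1$ for $p=q_j$, as required.

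For $p=2\mid N$ the congruence is sharper: $16\mid M^4$, so $g\equiv x_1^3\equiv 1\pmod 8$. In particular $g$ is odd, so $c$ is odd and $c^2\equiv1\pmod 8$. This gives $d_0\equiv 1\pmod 8$, and the Kronecker symbol $\left(\frac{d_0}{2}\right)=1$.

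Non-squareness is immediate from $m\geq1$. We have $\left(\frac{d_0}{q_1}\right)=-1$, which is impossible for a square coprime to $q_1$. We also have $d_0\neq0$: if $g=0$, then $x_1^3\equiv 0 \pmod{q_1}$, contradicting $q_1\nmid x_1$. This is where the hypothesis $t_j\neq0$, automatic for non-residues, is used.

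The sign statement is the only estimate. The sign of $d_0$ equals the sign of $g$, because $c^2>0$. Suppose $|x_1|\geq (|A|+|B|+1)M^2$, and write $X=|x_1|\geq M^2\geq1$. Then
\[|AM^4x_1+BM^6|\leq |A|M^4X+|B|M^6\leq (|A|+|B|)M^4X.\]
Here we use $M^6\leq M^4X$, which holds since $X\geq M^2$. On the other hand, $X^3\geq (|A|+|B|+1)^2M^4X>(|A|+|B|)M^4X$. So the cubic term dominates and $\sign(g)=\sign(x_1)$.

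Finally, the existence of such $x_1$ of either sign follows from the Chinese Remainder Theorem. The congruences modulo $8$ (or $2$), $p_i$ and $q_j$ are compatible because the moduli are pairwise coprime. They determine a residue class modulo $8qN/2$ or $qN$, and that class contains integers of arbitrarily large absolute value of either sign.

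I expect no real obstacle. The only points needing care are bookkeeping: that the square factor $c^2$ does not disturb the symbols, which needs $p\nmid c$, and the exact form of the size bound in the last step.
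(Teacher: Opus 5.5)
Your proof is correct and follows the paper's argument: you clear the denominator $(qN)^6$, read the residue symbols at each $p\mid qN$ off $x_1^3$, get non-squareness from the symbol at $q_1$, and get the sign from domination of the cubic term (you run the estimate on the integer numerator rather than on $x_0$, which is equivalent). Your extra bookkeeping fills in details the paper leaves implicit: $p\nmid c$ in $g=d_0c^2$, $g\neq 0$, and the Chinese Remainder Theorem existence of $x_1$ of either sign.
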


\begin{proof}
Let $f(x)=x^3+Ax+B$ with $A, B\in\Z$. Consider $x_0=x_1/(qN)^2$, for some $x_1\in\Z$ satisfying the congruence conditions of the statement of the lemma, and let $d_0$ be the square-free part of $f(x_0)$. Then,
\[f(x_0)=f(x_1/(qN)^2)=\frac{x_1^3}{(q^2N^2)^3}+\frac{Ax_1}{q^2N^2}+B=\frac{x_1^3+q^4N^4Ax_1+q^6N^6B}{q^6N^6}\]
Thus, $f(x_0)=(1/(qN)^3)^2\cdot(x_1^3+q^4N^4Ax_1+q^6N^6B)$ and, in particular, if $x_1\equiv1\bmod 8$ and $x_1\equiv s_i\bmod p_i$, then
\[x_1^3+q^4N^4Ax_1+q^6N^6B\equiv1\bmod 8,\quad\equiv s_i^3\bmod p_i,\quad\text{and}\quad\equiv t_j^3\bmod q_j.\]
We note here that $d_0$ is therefore equivalent to $x_1^3+q^4N^4Ax_1+q^6N^6B$ modulo squares, so if $2|N$, then $x_1\equiv1\bmod8$ and we conclude that $d_0\equiv1\bmod8$. Furthermore, the Kronecker symbol of $d_0$, the square-free part of $f(x_0)$, has value
\[\left(\frac{d_0}{p}\right)=\left(\frac{x_1^3+q^4N^4Ax_1+q^6N^6B}{p}\right)=1,\]
if $p=p_i$, for $1\leq i\leq n$, and the symbol equals $-1$ if $p=q_j$, as desired. In particular, since $m\geq1$, we have $\left(\frac{d_0}{q_1}\right)=-1$, so $d_0$ is not a square.

Finally, we note that since $f(x)$ is a cubic polynomial, one can choose $x_1$ to satisfy all the necessary congruences and be positive and sufficiently large so that $f(x)$ and $d_0$ are also positive or, conversely, $x_1$ can be chosen to satisfy the necessary congruences and be negative and $|x_1|$ sufficiently large so that $d_0$ is negative. Explicitly, if $|x_1|\geq(|A|+|B|+1)(qN)^2$, then $|x_0|\geq|A|+|B|+1\geq1$, so $|Ax_0+B|\leq(|A|+|B|)|x_0|<|x_0|^2\leq|x_0|^3$, and therefore $\sign(d_0)=\sign(f(x_0))=\sign(x_0)=\sign(x_1)$.
\end{proof}

\begin{lemma}\label{extnlemma}
Let $f(x)=x^3+Ax+B$ with $A,B\in\Z$, let $N=p_1\ldots p_g$ be a product of distinct primes $p_1<p_2<\ldots<p_g$, and let $\varepsilon_k\in\{\pm1\}$, for $k\geq 1$, be a fixed sequence of signs. Then, there are infinitely many rational numbers $z_1,z_2,\ldots$ such that
\begin{enumerate}
\item $K_k=\Q(\sqrt{f(z_k)})$ is a quadratic extension of $\Q$, and $K_k\neq\Q(i)$, for all $k\geq1$,
\item for any integer $n\geq1$, the compositum $L_n=\prod_{k=1}^nK_k$ is an extension of degree $2^n$ over $\Q$,
\item if $d_i$ is the square-free part of $f(z_i)$, then $\left(\frac{d_i}{p}\right)=1$ for every prime $p$ dividing $2N$, and $d_i\equiv1\bmod 8$,
\item the numbers $d_1,\ldots,d_n$ are pairwise relatively prime and also relatively prime to $2N$,
\item the numbers $z_1,z_2,\ldots$ can be chosen so that $\sign(d_k)=\varepsilon_k$ for every $k\geq1$.
\end{enumerate}
\end{lemma}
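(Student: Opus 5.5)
We construct the $z_k$ inductively, applying Lemma~\ref{findgoodvaluelemma} at each step with a growing modulus. The inductive hypothesis is that $z_1,\ldots,z_{k-1}$ have been chosen with square-free parts $d_1,\ldots,d_{k-1}$ satisfying (1)--(5). Set
\[N_k = 2N\prod_{i<k}\operatorname{rad}(d_i),\]
the product of all primes dividing $2N d_1\cdots d_{k-1}$ (we may harmlessly insert $2$ into $N$). Choose an odd prime $q^{(k)}$ not dividing $N_k$, and fresh at each step; for instance, take the smallest odd prime not dividing $N_k$. Apply Lemma~\ref{findgoodvaluelemma} with $N=N_k$, $q=q^{(k)}$, residues $s_i=1$, $t_1$ a non-residue, and $|x_1|\geq(|A|+|B|+1)(qN_k)^2$ with $\sign(x_1)=\varepsilon_k$. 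Put $z_k=x_1/(qN_k)^2$. The lemma then gives:
\begin{itemize}
\item $\left(\frac{d_k}{p}\right)=1$ for every $p\mid N_k$, and in particular for every $p\mid 2N$;
\item $d_k\equiv 1\bmod 8$;
\item $\sign(d_k)=\varepsilon_k$;
\item $d_k$ is not a square.
\end{itemize}
This yields (3) and (5). Moreover $d_k\neq -1$, since $d_k\equiv1\bmod 8$ while $-1\equiv 7\bmod 8$, so $K_k\neq\Q(i)$ and $K_k$ is a genuine quadratic field, which is (1).

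For (4), a nonzero Kronecker symbol $\left(\frac{d_k}{p}\right)=1$ forces $p\nmid d_k$. Hence $d_k$ is coprime to every prime dividing $2N$ and to every prime dividing $d_i$ for $i<k$. This gives pairwise coprimality and coprimality with $2N$.

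For (2), we show the classes of $d_1,\ldots,d_n$ are independent in $\Q^\times/(\Q^\times)^2$. Suppose $\prod_{i\in S}d_i$ is a square for some nonempty $S$. Since the $d_i$ are pairwise coprime and square-free, this product is square-free and equals a square, hence it is $1$ in absolute value. Each $d_i$ has $|d_i|>1$: it is not a square, so $d_i\neq 1$, and $d_i\neq -1$ as shown above. So this is impossible. By Kummer theory, $[L_n:\Q]=2^n$. Infinitely many $z_k$ are produced because the induction never stops; distinctness also follows from independence.

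The main obstacle is bookkeeping rather than new ideas. First, the lemma's congruence $x_1\equiv 1\bmod 8$ requires $2\mid N_k$, which is why $2$ is built into $N_k$. Second, if the $q^{(k)}$ are chosen carelessly, some later $d_j$ could be divisible by an earlier $q^{(k)}$. This does not harm the statement, since the requirement $\left(\frac{d_j}{p}\right)=1$ for primes $p\mid d_i$, $i<j$, already handles coprimality. Finally, the simultaneous congruence conditions modulo $8$, the odd $p_i$, and $q$ are compatible by CRT, and within that residue class $x_1$ can be taken with arbitrary sign and arbitrarily large absolute value. So the construction is explicit.
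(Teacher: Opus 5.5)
Your proposal is correct and follows essentially the same route as the paper: a recursive application of Lemma~\ref{findgoodvaluelemma}, where coprimality with the earlier $d_i$ is forced by a nonzero Kronecker-symbol condition at their prime divisors. The only differences are cosmetic. You put those primes into $N_k$ with residue conditions, whereas the paper keeps $N^*$ fixed and puts them into $q$ with non-residue conditions; and you deduce $[L_n:\Q]=2^n$ directly from pairwise coprimality and square-freeness, rather than from ramification at an odd prime divisor of $d_{k+1}$.
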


\begin{proof}
Let $N^*=N$ if $N$ is even, and $2N$ if $N$ is odd. We will construct the rational numbers $z_k$, for $k\geq1$, recursively. Let $q_1$ be the smallest odd prime not dividing $N^*$. Let $z_1=x_0$ where $x_0$ is a rational value output of Lemma~\ref{findgoodvaluelemma} for $N^*$ and $q=q_1$, with $s_i=1$ for $1\leq i\leq g$, and $t_1$ a fixed quadratic non-residue mod $q_1$. Let $d_1=\sqf(f(z_1))$, the square-free part of $f(z_1)$, which Lemma~\ref{findgoodvaluelemma} shows is not a square, i.e., $d_1\neq1$. Thus, $K_1=\Q(\sqrt{f(z_1)})$ is a quadratic extension of $\Q$, and $\left(\frac{d_1}{p}\right)=1$ for every prime $p$ dividing $N^*$ (and therefore every prime dividing $2N$), again by the lemma. Finally, note that 2 divides $N^*$, so the square-free part satisfies $d_1\equiv x_1^3\equiv1\bmod 8$, and therefore $d_1\not\equiv-1\bmod(\Q^\times)^2$. Hence, $K_1\neq\Q(i)$.

Now suppose we have defined $z_1,\ldots,z_k$, with $K_i$, and $d_i=\sqf(f(z_i))$ satisfying the desired properties, for every $1\leq i\leq k$. Let $q=q_1\prod_{i=1}^k\prod_{p|d_i}p=q_1q_2\ldots q_m$. Notice that by construction each prime $q_j$ dividing $q$ is relatively prime to $N$, because of the values of the Kronecker symbol of $d_i$ at primes dividing $N$. Now let $z_{k+1}=x_0=x_1/(qN^*)^2$ be the output of Lemma~\ref{findgoodvaluelemma} for $N^*$ and $q$, with $x_1\equiv1\bmod 8$, and $s_i\equiv1\bmod p_i$ for $1\leq i\leq n$, and $t_j$ a set of fixed quadratic non-residues mod $q_j$ for $1\leq j\leq m$, and let $d_{k+1}=\sqf(f(z_{k+1}))$ and $K_{k+1}=\Q(\sqrt{f(z_{k+1})})=\Q(\sqrt{d_{k+1}})$.

Since $d_{k+1}$ is not a square, $K_{k+1}/\Q$ is quadratic, and it satisfies (1). By Lemma~\ref{findgoodvaluelemma}, the value $d_{k+1}$ satisfies (3). Moreover, by construction $K_{k+1}\neq\Q(i)$. In particular, $d_{k+1}$ is not $\pm1$ modulo squares, and $d_{k+1}\equiv1\bmod8$. Therefore, there is an odd prime divisor $\ell$ of $d_{k+1}$. Further, $d_{k+1}$ divides $x_1^3+q^4N^4Ax_1+q^6N^6B$, which is relatively prime to $qN$ by the congruence conditions on the integer $x_1$, and therefore the odd prime divisor $\ell$ of $d_{k+1}$ ramifies in $K_{k+1}$ but it does not ramify in the compositum $K_1\cdots K_k$, because $\gcd(\ell,2\prod_{i=1}^kd_i)=1$. Hence, they are disjoint extensions, and the degree of $L_{k+1}=\prod_{i=1}^{k+1}K_{i}$ must be $2^k\cdot2=2^{k+1}$, and so (2) is also verified.

It also follows from the above discussion that $d_{k+1}$ is constructed (via congruence restrictions) to be relatively prime to any prime $p|N^*$ or $q|\prod_{i=1}^kd_k$. This shows (4).

Finally, fix a sequence of signs $\varepsilon_k\in\{\pm1\}$. Lemma~\ref{findgoodvaluelemma} shows that each $z_k$ can be chosen so that $\sign(d_k)=\varepsilon_k$, for every $k\geq1$: at step $k$ the sign of $x_1$ is chosen independently of the previous steps, and the congruence conditions on $x_1$ do not depend on its sign. This shows (5) and concludes the proof of the lemma.
\end{proof}

\begin{theorem}\label{maintheorem}
Suppose $E/\Q$ is an elliptic curve with rank $R_{E/\Q}$ and conductor $N_E$. Fix a sequence of signs $\varepsilon_k\in\{\pm1\}$, $k\geq1$. Then, there are infinitely many distinct square-free integers $d_k\neq\pm1$, one for each $k\geq1$, that can be explicitly recursively constructed, such that
\begin{enumerate}
\item the numbers $d_1,d_2,\ldots$ are pairwise relatively prime and also relatively prime to $2N_E$,
\item for any integer $n\geq1$, the compositum $L_n=\prod_{k=1}^nK_k$ is an extension of degree $2^n$ over $\Q$, where $K_k=\Q(\sqrt{d_k})$, and
\item the quadratic twist $E^{d_k}$ has $R_{E^{d_k}/\Q}\geq1$ and root number $W(E^{d_k})=\varepsilon_k$.
\item for each $n\geq1$, we have $E(L_n)_{tors}=E(\Q)_{tors}$.
\end{enumerate}
\end{theorem}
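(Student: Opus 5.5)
We will apply Lemma~\ref{extnlemma} to a short Weierstrass model $y^2=f(x)=x^3+Ax+B$ of $E$ with $A,B\in\Z$. We take $N$ to be the product of the distinct primes dividing $2\cdot3\cdot5\cdot7\cdot N_E$, together with the primes dividing the discriminant of the model; the latter ensures that twisting the model and twisting $E$ agree at every relevant prime. This produces $z_k$ and square-free $d_k=\sqf(f(z_k))$, and properties (1) and (2) follow directly from Lemma~\ref{extnlemma}(2) and (4). For (3), the key observation is that the point $(z_k,a_k)$ lies on the model $d_ky^2=f(x)$ of $E^{d_k}$, where $f(z_k)=d_ka_k^2$ and $a_k\neq0$. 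Its $y$-coordinate is nonzero, so the point is not $2$-torsion. By Theorem~\ref{thm-najmantwists}, $E^{d}(\Q)_\tors=E(\Q)[2]$ for all but at most three square classes $d$. We will arrange that the $d_k$ avoid those finitely many exceptional classes, for instance by taking $|x_1|$ large at each step or by discarding finitely many candidates. The point is then of infinite order, and $R_{E^{d_k}/\Q}\geq1$.

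For the root number we use Lemma~\ref{kronlemma}(1). Since $d_k\equiv1\bmod8$ and $d_k$ is square-free, $d_k$ is a fundamental discriminant, and it is coprime to $N_E$. Hence $W(E^{d_k})=W(E)\left(\frac{d_k}{-N_E}\right)$. Writing $-N_E=-1\cdot\prod p^{e_p}$, Lemma~\ref{extnlemma}(3) gives $\left(\frac{d_k}{p}\right)=1$ for every $p\mid N_E$ (including $p=2$, since $d_k\equiv1\bmod 8$). The Kronecker symbol $\left(\frac{d_k}{-1}\right)$ equals $\sign(d_k)$. Therefore $W(E^{d_k})=W(E)\sign(d_k)$. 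To obtain $W(E^{d_k})=\varepsilon_k$, we run Lemma~\ref{extnlemma}(5) with the sign sequence $\varepsilon_k W(E)$.

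For (4) we apply Corollary~\ref{cor-fujita}. The field $F_E$ is multi-quadratic and unramified outside the primes dividing $210N_E$, all of which divide $N$. Suppose $L_n\cap F_E\neq\Q$. Then it contains a quadratic subfield $\Q(\sqrt{d})$ with $d\in\mathcal{D}_n$, $d\neq1$. Since the $d_k$ are pairwise coprime and congruent to $1\bmod 8$, such a $d$ is a product of some of the $d_k$, is $\equiv1\bmod8$, and is coprime to $2N$. If $d\neq\pm1$, it has an odd prime factor outside $N$, which ramifies in $\Q(\sqrt d)$ but not in $F_E$, a contradiction. We must also exclude $d=-1$, which is automatic since $d\equiv1\bmod 8$. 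Thus $L_n\cap F_E=\Q$, and $E(L_n)_\tors=E(\Q)_\tors$.

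The main obstacle is the compatibility in step (3) between the model-dependent twist $d y^2=f(x)$ and the root-number formula, which is stated in terms of the conductor of $E$. This is why we enlarge $N$ to include the model discriminant and the primes $2$ and $3$. A second point requiring care is avoiding the at most three exceptional torsion classes of Theorem~\ref{thm-najmantwists} while preserving the recursive congruence construction. We would handle this by noting that, for fixed congruence data, varying $x_1$ yields infinitely many distinct square classes. If that proves awkward, one can instead use Theorem~\ref{thm-fujita} and (4) directly: a nontorsion check for the point holds once $E^{d_k}(\Q)_\tors\subseteq E(K_k)_\tors=E(\Q)_\tors$, whose points over $K_k$ are all defined over $\Q$, whereas our point on $E$ is defined over $K_k$ but not over $\Q$.
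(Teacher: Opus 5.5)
Your proof is correct, but it reaches (3) and (4) by a different route than the paper.

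\textbf{What the paper does.} It keeps $N$ equal to the radical of $2N_E$. It uses Theorem~\ref{thm-najmantwists} to discard at most three $d_k$, so that $E^{d_k}(\Q)_\tors=E(\Q)[2]$. It then obtains (4) by extracting a subsequence $d'_1,d'_2,\ldots$, in which each new term has a prime factor outside $210N_E\,d'_1\cdots d'_n$. Discarding and extracting shift indices, so the paper must also run Lemma~\ref{extnlemma} with a sign sequence containing both signs infinitely often to keep $W(E^{d'_k})=\varepsilon_k$.

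\textbf{What you do differently.} You put $2,3,5,7$ into $N$ from the start. Then every nontrivial $d\in\mathcal{D}_n$ is square-free, $\equiv1\bmod 8$, prime to $210N_E$ and $\neq\pm1$. Hence $\Q(\sqrt d)$ is ramified at a prime where $F_E$ is unramified, and $L_n\cap F_E=\Q$ with no subsequence needed. This is exactly the observation at the end of Remark~\ref{rem-factoringfree}.

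Your fallback argument for (3) then uses (4) itself. The $K_k$-isomorphism $E^{d_k}\to E$, $(x,y)\mapsto(x,\sqrt{d_k}\,y)$, sends $(z_k,a_k)$ to a point of $E(K_k)\setminus E(\Q)$. That point cannot be torsion, since $E(K_k)_\tors=E(\Q)_\tors$. This bypasses Najman's theorem entirely and keeps the prescribed signs index by index. It is the cleaner proof; the paper's route buys essentially only a smaller modulus in the congruences.

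\textbf{Make the fallback your main argument.} Your first option, ``taking $|x_1|$ large,'' does not by itself avoid the exceptional square classes. You would need something like Siegel's theorem: for each square-free $e$, the equation $x_1^3+q^4N^4Ax_1+q^6N^6B=e\,m^2$ has only finitely many integral solutions. Only then do you know that a fixed progression of $x_1$ produces infinitely many square classes. Simply discarding candidates breaks the correspondence between indices and prescribed signs unless you add the paper's bookkeeping.

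\textbf{The obstacle you flag is not real.} For any Weierstrass model, $dy^2=f(x)$ is $\Q$-isomorphic to the twist of $E$ by $d$. Lemma~\ref{kronlemma} is about that intrinsic twist, so adding the primes of the model discriminant to $N$ is unnecessary, though harmless.
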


\begin{proof}
Fix a sequence of signs $\varepsilon_k\in\{\pm1\}$. Let $E/\Q$ be an elliptic curve, and let $y^2=f(x)=x^3+Ax+B$, with $A,B\in\Z$, be a short Weierstrass model for $E$. Let
\[N=\begin{cases}\prod_{p|N_E}p&\text{if }\nu_2(N_E)>0\\ 2\cdot\prod_{p|N_E}p&\text{if }\nu_2(N_E)=0,\end{cases}\]
and let $W(E)$ be the global root number of $E/\Q$. By Lemma~\ref{extnlemma}, there exists an infinite sequence of rational numbers $z_k$ with square-free part $d_k$ that satisfy (1)-(5) in the Lemma, with $\sign(d_k)=\varepsilon_k\cdot W(E)$ for every $k\geq1$. Lemma~\ref{extnlemma}, parts (2) and (4), show parts (1) and (2) in the theorem we are currently proving.

To show (3), note that by construction $\left(\frac{d_k}{p}\right)=1$ for all $p$ dividing $N$, and $d_k\equiv1\bmod 8$ (so $d_k\equiv1\bmod4$ and it is a fundamental discriminant). Now using Lemma~\ref{kronlemma},
\[W(E^{d_k})=\left(\frac{d_k}{-N_E}\right)W(E)=\left(\frac{d_k}{-1}\right)\prod_{p|N_E}\left(\frac{d_k}{p}\right)^{\nu_p(N_E)}W(E)=\left(\frac{d_k}{-1}\right)W(E)=\varepsilon_k\cdot W(E)^2=\varepsilon_k,\]
where we have used the fact that $\left(\frac{d_k}{-1}\right)=\sign(d_k)=\varepsilon_k\cdot W(E)$. Thus, $W(E^{d_k})=\varepsilon_k$ for all $k\geq1$. Furthermore, let $K_k=\Q(\sqrt{d_k})$. By Theorem~\ref{thm-najmantwists}, $E(\Q)[2]\subsetneq E^d(\Q)_\tors$ for at most three classes $d\in\Q^\times/(\Q^\times)^2$, and the $d_k$ are pairwise distinct modulo squares by part (2) of Lemma~\ref{extnlemma}. Hence, after discarding at most three of the $d_k$ (and relabeling), we may assume that $E^{d_k}(\Q)_\tors=E(\Q)[2]$ for every $k\geq1$.

Now, if we let $f(z_k)=d_k\cdot m_k^2$ (where $m_k$ is a non-zero rational number), then the point $P_k=(z_k,m_k)$ belongs to the quadratic twist $E^{d_k}:d_ky^2=f(x)$, and since $m_k\neq0$ (because $\Q(\sqrt{f(z_k)})/\Q$ is a quadratic extension), the point $P_k$ is not a 2-torsion point on $E^{d_k}$. The fact that $E^{d_k}(\Q)_\tors=E(\Q)[2]$ now implies that $P_k$ is a point of infinite order on $E^{d_k}$ and therefore $R_{E^{d_k}/\Q}\geq1$, and $W(E^{d_k})=\varepsilon_k$. This shows (3). On the model $y^2=x^3+d_k^2Ax+d_k^3B$, the point $P_k$ corresponds to $(d_kz_k,d_k^2m_k)$.

Finally, let $F_E$ be the finite multi-quadratic extension of $\Q$ given by Cor.~\ref{cor-fujita}, and let $f_1,\ldots,f_m$ be a complete list of square-free integer divisors of $2\cdot3\cdot5\cdot7\cdot N_E=210N_E$. Since $\{d_k\}_{k\geq1}$ is infinite, we can pick a new subsequence $\{d_k'\}_{k\geq1}$ as follows:
\begin{enumerate}
\item Pick $d_1'\neq \pm f_j$ for any $f_1,\ldots,f_m$. This ensures that $K_1=\Q(\sqrt{d_1'})$ is disjoint from $F_E$ and therefore $E(K_1)_\tors=E(\Q)_\tors$.
\item If $d_1',\ldots,d_n'$ have been chosen, so that $L_n\cap F_E=\Q$, choose a new value $d_{n+1}'$ among the remaining values of $\{d_k\}$ such that $d_{n+1}'$ is divisible by a prime $\ell$ not dividing $C_N=210\cdot d_1'\cdots d_n'\cdot N_E$. Since there are only finitely many square-free divisors of $C_N$, such $d_{n+1}'$ must exist, and $\Q(\sqrt{d_{n+1}'})$ ramified at a prime $\ell$ that does not ramify in $L_n/\Q$ or in $\Q(E[M])$. Hence $L_{n+1}=L_n(\sqrt{d_{n+1}'})$ is of degree $2^{n+1}$ and $L_{n+1}\cap\Q(E[M])=\Q$, and therefore $E(L_{n+1})_\tors=E(\Q)_\tors$, as desired.
\end{enumerate}
When the signs $\varepsilon_k$ are not all equal, the discarding and relabeling above, and the choice of the subsequence $\{d_k'\}$, must respect the prescribed signs. To arrange this, apply Lemma~\ref{extnlemma} with a sequence of signs in which both $+1$ and $-1$ occur infinitely often, so that infinitely many values $d_k$ with $W(E^{d_k})=+1$, and infinitely many with $W(E^{d_k})=-1$, are available. At each step above only finitely many values are excluded, so we may choose $d'_{n+1}$ (and $d'_1$) among the remaining values with $W(E^{d'_{n+1}})=\varepsilon_{n+1}$. Then $W(E^{d'_k})=\varepsilon_k$ for every $k$, and the rest of the argument is unchanged.
This concludes the proof.
\end{proof}

\begin{remark}[A factoring-free variant]\label{rem-factoringfree}
The recursion in the proof of Lemma~\ref{extnlemma} uses $q=q_1\prod_{i\leq k}\prod_{p\mid d_i}p$, which requires the prime factorizations of $d_1,\ldots,d_k$. For the curves of Section~\ref{sec-examples} these integers have more than a thousand digits, so this is not effective in practice. The following variant needs no factorization, and it is the one implemented in Section~\ref{sec-code}. Let $q_k$ be the $k$-th odd prime not dividing $N^*$, let $t_k$ be a quadratic non-residue modulo $q_k$, and for each $k\geq1$ let $z_k=x_1/(q_1\cdots q_kN^*)^2$, where
\[x_1\equiv1\bmod 8,\quad x_1\equiv1\bmod p\ (p\mid N^*\text{ odd}),\quad x_1\equiv1\bmod q_j\ (j<k),\quad x_1\equiv t_k\bmod q_k.\]
This is Lemma~\ref{findgoodvaluelemma} applied to $N^*q_1\cdots q_{k-1}$ and $q=q_k$, so $d_k=\sqf(f(z_k))$ satisfies $d_k\equiv1\bmod 8$, $\left(\frac{d_k}{p}\right)=1$ for $p\mid N^*$, $\left(\frac{d_k}{q_j}\right)=1$ for $j<k$, and $\left(\frac{d_k}{q_k}\right)=-1$. If $S\subseteq\{1,\ldots,n\}$ is non-empty and $s=\min S$, then
\[\left(\frac{\prod_{k\in S}d_k}{q_s}\right)=\left(\frac{d_s}{q_s}\right)\prod_{k\in S,\,k>s}\left(\frac{d_k}{q_s}\right)=-1,\]
so no such product is a square, and $[L_n:\Q]=2^n$. These properties do not depend on the sign of $x_1$, which is chosen separately at each step, so the sign of each $d_k$, and hence $W(E^{d_k})$, can be prescribed independently for every $k$. Thus parts (1), (2), (3) and (5) of Lemma~\ref{extnlemma} hold, but (4) is replaced by the weaker statement that each $d_k$ is relatively prime to $2N$. Consequently, Theorem~\ref{maintheorem} holds for this variant with part (1) replaced by ``each $d_k$ is relatively prime to $2N_E$'': the proof of (2) and (3) only uses $\gcd(d_k,2N_E)=1$, and so does Lemma~\ref{lem:multiquadratic-rank}, which needs $\gcd(D_k,N_E)=1$ but not pairwise coprimality. For (4), the choice of the subsequence $\{d'_k\}$ above does not use pairwise coprimality either. Moreover, if $N^*$ is enlarged to $N=2\cdot3\cdot5\cdot7\cdot\prod_{p\mid N_E,\,p>7}p$, as in Section~\ref{sec-code}, no subsequence is needed: for every non-empty $S$, the square-free part of $\prod_{k\in S}d_k$ is $\equiv1\bmod 8$, prime to $210N_E$ and not a square, hence not $\pm1$, so it has a prime factor $\ell\nmid 210N_E$, and $\Q(\sqrt{\prod_{k\in S}d_k})$ is ramified at $\ell$ and not contained in $F_E$. Therefore $L_n\cap F_E=\Q$ and $E(L_n)_\tors=E(\Q)_\tors$ by Corollary~\ref{cor-fujita}.
\end{remark}

\begin{remark}\label{rem-signpatterns}
The bound in Lemma~\ref{lem:multiquadratic-rank}(4)(c) is at least the bound in (4)(a) for $n\geq2$, with equality for $n=2$, and it is larger than the bound in (4)(b) for every $n\geq2$. More generally, suppose that $\gcd(D_k,N_E)=1$ for every $k$, that $W(E^{d_k})=\varepsilon_k$ for an arbitrary choice of signs, and let $P=\{k:\varepsilon_k=1\}$ and $p=|P|$. Counting as in the proof of Lemma~\ref{lem:multiquadratic-rank} (rank $\geq1$ for each $d\neq1$ with $W(E^d)=-1$, and rank $\geq2$ for each $d_k$ with $k\in P$) gives:
\begin{itemize}
\item if $W(E)=1$, then $W(E^d)=-1$ if and only if $|S\setminus P|$ is odd, and the count is $2n$ if $p=n$ and $2^{n-1}+2p$ if $p<n$;
\item if $W(E)=-1$, then $W(E^d)=-1$ if and only if $|S\cap P|$ is even, and the count is $2^n-1$ if $p=0$ and $2^{n-1}-1+2p$ if $p\geq1$.
\end{itemize}
Hence, among all choices of signs, the largest lower bound obtained in this way is $\max\{2n,2^{n-1}+2n-2\}$ if $W(E)=1$, attained by Lemma~\ref{lem:multiquadratic-rank}(4)(a) and (4)(c), and $\max\{2n+2^{n-1}-1,2^n-1\}$ if $W(E)=-1$, attained by (2) and (3). In particular, mixing signs does not improve the bounds when $W(E)=-1$.
\end{remark}

\subsection{Proof of Theorem~\ref{thm-main2}.} The proof of Theorem~\ref{thm-main2} is now immediate. Theorem~\ref{maintheorem} provides the sequence $\{d_k\}_{k\geq1}$ with the required properties, and then Lemma~\ref{lem:multiquadratic-rank} proves the properties exactly as stated.

\subsection{Proof of Theorem~\ref{thm-main1}.} Fix $T$, $\ast\in\{1,\infty\}$ and $n\geq1$. If $\sup(\Rk^\ast_{1,T})=\infty$, then for every $r$ there is a curve $E/\Q$ with $E(\Q)_\tors\cong T$ and $\rank E(\Q)\geq r$ (infinitely many non-isomorphic ones if $\ast=\infty$), and by Theorem~\ref{maintheorem}(4) and Lemma~\ref{lem:multiquadratic-rank}(1) the base change of each of them to the corresponding field $L_n$ of degree $2^n$ has torsion $T$ and rank $\geq r+n$. So $\sup(\Rk^\ast_{2^n,T})=\infty$ as well, and there is nothing to prove. Assume now that $R=\sup(\Rk^\ast_{1,T})$ is finite. Since $\Rk^\ast_{1,T}$ is a set of non-negative integers, the supremum is attained. If $\ast=1$, choose one elliptic curve $E/\Q$ with $E(\Q)_\tors\cong T$ and $\rank E(\Q)=R$, and put $I=\{1\}$, $E_1=E$. If $\ast=\infty$, by definition of $\Rk^\infty_{1,T}$ there are infinitely many elliptic curves $\{E_i\}_{i\in I}$ over $\Q$, pairwise non-isomorphic over $\overline{\Q}$, with $E_i(\Q)_\tors\cong T$ and $\rank E_i(\Q)=R$.

For each $i\in I$, the Parity Conjecture gives $W(E_i)=(-1)^R$. Apply Theorem~\ref{maintheorem} to $E_i$, with a choice of signs $\varepsilon_k$ to be made below, and let $L_{n,i}$ be the resulting field of degree $2^n$. By Theorem~\ref{maintheorem}(4), $E_i(L_{n,i})_\tors=E_i(\Q)_\tors\cong T$. By Theorem~\ref{maintheorem}(1), the discriminants of the fields $\Q(\sqrt{d_k})$ are prime to $N_{E_i}$, so Lemma~\ref{lem:multiquadratic-rank} applies, and Parity holds for all twists by assumption:
\begin{itemize}
\item If $R$ is odd, then $W(E_i)=-1$. With all $\varepsilon_k=+1$, Lemma~\ref{lem:multiquadratic-rank}(2) gives $\rank E_i(L_{n,i})\geq R+2n+2^{n-1}-1$; with all $\varepsilon_k=-1$, Lemma~\ref{lem:multiquadratic-rank}(3) gives $\rank E_i(L_{n,i})\geq R+2^n-1$ (in case (a) with equality, in case (b) under Parity).
\item If $R$ is even, then $W(E_i)=+1$. With all $\varepsilon_k=+1$, Lemma~\ref{lem:multiquadratic-rank}(4)(a) gives $\rank E_i(L_{n,i})\geq R+2n$; if $n\geq2$, with $\varepsilon_k=+1$ for $k<n$ and $\varepsilon_n=-1$, Lemma~\ref{lem:multiquadratic-rank}(4)(c) gives $\rank E_i(L_{n,i})\geq R+2^{n-1}+2n-2$. (With all $\varepsilon_k=-1$, Lemma~\ref{lem:multiquadratic-rank}(4)(b) gives only $R+2^{n-1}$; see Remark~\ref{rem-signpatterns}.)
\end{itemize}
Choosing for each $n$ the signs that give the larger bound, we obtain fields $L_{n,i}$ of degree $2^n$ with
\[\rank E_i(L_{n,i})\geq\rank E_i(\Q)+m,\]
where $m=\max\{2n+2^{n-1}-1,2^n-1\}$ if $R$ is odd, and $m=\max\{2n,2^{n-1}+2n-2\}$ otherwise, and with $E_i(L_{n,i})_\tors\cong T$. If $\ast=\infty$, the curves $E_i/L_{n,i}$ are pairwise non-isomorphic over $\overline{\Q}$, because the $E_i$ are. This shows $\sup(\Rk^\ast_{2^n,T})\geq R+m$, and the fields $L_{n,i}$ are explicit by Theorem~\ref{maintheorem}. This completes the proof.

\subsection{Proof of Corollary~\ref{cor-highrank}.}
Let $E_{32}$ and $E_{31}'$ be as in Section~\ref{sec-examples}. We computed
$W(E_{32})=1$, and $W(E_{31}')=-1$ using Magma. Fix $n\ge1$ and
signs $\varepsilon_1,\dots,\varepsilon_n\in\{\pm1\}$. Theorem~\ref{maintheorem} gives square-free integers $d_1,\dots,d_n$
with $W(E^{d_k})=\varepsilon_k$ and $L_n=\Q(\sqrt{d_1},\dots,\sqrt{d_n})$ of degree $2^n$, whose
discriminants are prime to the conductor, and over which the torsion does not grow
(Theorem~\ref{maintheorem}(4)). The same holds for $E_{31}'$ and fields $L_n'$. By
Theorem~\ref{thm-main2}(1), we have $R_{E_{32}/L_n}\ge 32+n$ and $R_{E_{31}'/L_n'}\ge 31+n$, for
either choice of signs.

Now assume the Parity Conjecture for the twists $E^d$ with $d\in\mathcal{D}_n$, $d\ne1$.
For $E_{32}$, Theorem~\ref{thm-main2}(4) gives $R_{E_{32}/L_n}\ge 32+2n$ if all $\varepsilon_k=1$,
and, if $n\geq2$, $\ge 32+2^{n-1}+2n-2$ if $\varepsilon_k=1$ for $k<n$ and $\varepsilon_n=-1$. For $E_{31}'$, Theorem~\ref{thm-main2}(2) gives
$R_{E_{31}'/L_n'}\ge 31+2n+2^{n-1}-1$ if all $\varepsilon_k=1$, and Theorem~\ref{thm-main2}(3)
gives $\ge 31+2^n-1$ if all $\varepsilon_k=-1$. Choosing, for each $n$, the signs
that give the larger bound, we obtain $R_{E_{32}/L_n}\ge 32+\max\{2n,2^{n-1}+2n-2\}$ and
$R_{E_{31}'/L_n'}\ge 30+\max\{2n+2^{n-1},2^n\}$.

\section{Further examples and code}\label{sec-code}
The code for this paper can be found at \href{https://github.com/dylan-costa/rank-growth-multiquadratic}{github.com/dylan-costa/rank-growth-multiquadratic}. In the Magma file \texttt{EllipticRankTwists.m} we provide the following two functions.
\begin{enumerate}
\item \texttt{NaiveRootNumberTwistSearch(E, N : keep := k)}, the naive search. We write $E/\Q$ in short Weierstrass form $y^2=f(x)$, and for every integer $x$ with $|x|\leq N$ we put $d=\sqf(f(x))$. The function keeps the values $d\neq1$ relatively prime to the conductor with $W(E^d/\Q)=1$ (so that $R_{E^d/\Q}$ is even, and positive if the point coming from $x$ has infinite order), and returns the $k$ of them with the smallest $|d|$. By Remark~\ref{rem-bruteforce}, we should expect an output given a large enough value of $N$.
\item \texttt{ConstructRootNumberTwists(E, $\varepsilon$ : howmany := n)}, the factoring-free variant of the construction of Theorem~\ref{maintheorem} described in Remark~\ref{rem-factoringfree}. For $\varepsilon\in\{\pm1\}$ it returns square-free integers $d_1,\ldots,d_n$ such that $W(E^{d_k})=\varepsilon$, $E^{d_k}$ has an explicit point of infinite order, and $L_n=\Q(\sqrt{d_1},\ldots,\sqrt{d_n})$ has degree $2^n$. The argument $\varepsilon$ may also be a sequence $[\varepsilon_1,\ldots,\varepsilon_n]$ of signs, in which case $W(E^{d_k})=\varepsilon_k$ for each $k$; for instance, \texttt{ConstructRootNumberTwists(E, [1,1,-1] : howmany := 3)} gives the sign pattern of Lemma~\ref{lem:multiquadratic-rank}(4)(c) for $n=3$. Let
\[N=2\cdot3\cdot5\cdot7\prod_{p|N_E,\ p>7}p,\]
let $q_k$ be the $k$-th odd prime not dividing $N$, and let $t_k$ be the least quadratic non-residue modulo $q_k$. At step $k$ the function takes $x_1$ with
\[x_1\equiv1\bmod 8,\quad x_1\equiv1\bmod p\ (p\mid N\text{ odd}),\quad x_1\equiv1\bmod q_j\ (j<k),\quad x_1\equiv t_k\bmod q_k,\]
that is, $x_1\equiv x_{1,0}\bmod M_k$ with $M_k=8\,q_1\cdots q_k\prod_{p|N,\ p\text{ odd}}p$, where $x_{1,0}$ is the least positive solution of these congruences. It then chooses the sign of $x_1$ to be $\varepsilon_k W(E)$, takes $|x_1|$ as small as possible subject to $|x_1|\geq(|A|+|B|+1)(q_1\cdots q_kN)^2$, puts $z_k=x_1/(q_1\cdots q_kN)^2$, and returns $d_k=\sqf(f(z_k))$. For $k=1$ this is Lemma~\ref{findgoodvaluelemma} with $q=q_1$. The optional arguments \texttt{W} and \texttt{BadPrimes} supply $W(E)$ and the primes dividing $N_E$. For curves such as those in Section~\ref{sec-examples} these are known, and passing them avoids computing the conductor and the root number. In that case, $d_k$ has more than 1000 digits and its square-freeness cannot be certified, but the square class of $d_k$, and hence $E^{d_k}$ and $W(E^{d_k})$, are exact.
\end{enumerate}

\begin{example}\label{ex-x3minus2}
Let $E/\Q$ be the elliptic curve $E:y^2=x^3-2$, so $A=0$ and $B=-2$. Then $N_E=1728=2^6\cdot3^3$, $E(\Q)_\tors$ is trivial, $R_{E/\Q}=1$ and $W(E)=-1$. We find twists with $W(E^d)=1$ and $R_{E^d/\Q}\geq1$ (hence $R_{E^d/\Q}\geq2$ under the Parity Conjecture) in two ways, using the two Magma functions described above.

\medskip
\noindent\textbf{1. Naive search.} In Magma:
\begin{verbatim}
load "EllipticRankTwists.m";
E := EllipticCurve([0, 0, 0, 0, -2]);
best, all := NaiveRootNumberTwistSearch(E, 10 : keep := 10);
\end{verbatim}
The function runs over $-10\leq x\leq10$, sets $d=\sqf(f(x))$, and keeps the $d$ such that $W(E^d)=1$ and the discriminant of $\Q(\sqrt{d})$ is relatively prime to $N_E$, as required by Lemmas~\ref{kronlemma} and \ref{lem:multiquadratic-rank}. The output is sorted by $|d|$. It returns
\[d=341,-731,\]
coming from $x=7$ and $x=-9$, respectively. The value $x=3$ is skipped, since $f(3)=25$ is a square. The values $x=-6,-3,0,2,4,5,8,10$ also give $W(E^d)=1$, but the discriminant of $\Q(\sqrt{d})$ for the corresponding $d=-218,-29,-2,6,62,123,510,998$ shares a prime with $N_E=1728$, so they are not returned. For instance, $E^{-2}:y^2=x^3+16$ and $E^6:y^2=x^3-432$ have rank 0, and the points $(0,4)$ and $(12,36)$ coming from $x=0$ and $x=2$ are 3-torsion points. All other $x$ with $|x|\leq10$ give $W(E^d)=-1$. For the two values returned, the point coming from $x$ has infinite order:
\begin{center}
\begin{tabular}{|c|c|c|c|c|}
\hline
$x_0$ & $d=\sqf(f(x_0))$ & $E^d$ & point on $E^d$ & $R_{E^d/\Q}$\\
\hline
$7$ & $341$ & $y^2=x^3-79303642$ & $(2387,116281)$ & 2\\
$-9$ & $-731$ & $y^2=x^3+781235782$ & $(6579,534361)$ & 2\\
\hline
\end{tabular}
\end{center}
In both cases $W(E^d)=1$, and Magma's \texttt{RankBounds} gives $R_{E^d/\Q}=2$ exactly. Hence, by Proposition~\ref{prop-kida}, $R_{E/\Q(\sqrt{d})}=R_{E/\Q}+R_{E^d/\Q}=3$ for $d=341,-731$. These are the values found by the naive search over $|x|\leq10$. They are not the rank-2 twists of smallest $|d|$ with discriminant prime to $N_E$: for instance, $E^{53}$ also has rank 2.

\medskip
\noindent\textbf{2. The construction.} In Magma:
\begin{verbatim}
ds, recs := ConstructRootNumberTwists(E, 1 : howmany := 3);
\end{verbatim}
Here $\varepsilon=1$, and the function computes $N_E$ and $W(E)$ itself. Since the primes dividing $N_E$ are 2 and 3, we get $N=2\cdot3\cdot5\cdot7=210$, and $q_1=11$ is the smallest odd prime not dividing $N$. The smallest quadratic non-residue modulo 11 is $t_1=2$, so the conditions on $x_1$ are
\[x_1\equiv1\bmod 8,\quad x_1\equiv1\bmod 3,\quad x_1\equiv1\bmod5,\quad x_1\equiv1\bmod7,\quad x_1\equiv2\bmod11,\]
that is, $x_1\equiv2521\bmod 9240$, where $M_1=8\cdot3\cdot5\cdot7\cdot11=9240$. Let $z_1=x_1/(q_1N)^2=x_1/2310^2=x_1/5336100$. We want $W(E^{d_1})=\varepsilon=1$, and $W(E^{d_1})=\sign(d_1)W(E)$ (see below), so we need $\sign(d_1)=\varepsilon W(E)=-1$, that is, $f(z_1)<0$. The function guarantees $\sign(f(z_1))=\sign(z_1)$ by asking for $|z_1|\geq|A|+|B|+1=3$, i.e. $|x_1|\geq3\cdot5336100=16008300$. The negative element of the class $2521\bmod9240$ closest to 0 satisfying this is $x_1=2521-1733\cdot9240=-16010399$, so $z_1=-16010399/5336100$. Then $f(z_1)=F/2310^6$ with $F=x_1^3-2\cdot2310^6\equiv x_1^3$ modulo 8, 3, 5, 7 and 11, and
\[f(z_1)=-\frac{4407871453975949551199}{12326391000^2},\]
\begin{align*}
d_1&=-4407871453975949551199\\
&=-89\cdot4597\cdot10773688394668603.
\end{align*}
Hence $d_1<0$, $d_1\equiv1\bmod 8$, $\left(\frac{d_1}{3}\right)=\left(\frac{d_1}{5}\right)=\left(\frac{d_1}{7}\right)=1$ and $\left(\frac{d_1}{11}\right)=-1$. In particular $d_1$ is a fundamental discriminant prime to $N_E$, and Lemma~\ref{kronlemma} gives
\[W(E^{d_1})=W(E)\left(\frac{d_1}{-N_E}\right)=(-1)\left(\frac{d_1}{-1}\right)\left(\frac{d_1}{2}\right)^6\left(\frac{d_1}{3}\right)^3=(-1)(-1)(1)(1)=1,\]
which we also confirmed with Magma's \texttt{RootNumber}. Thus $R_{E^{d_1}/\Q}\geq1$. Assuming Parity Conjecture for elliptic curves over $\Q$, we conclude $R_{E^{d_1}/\Q}\geq2$.

For $k=2,3$ the function adds the next odd primes $q_2=13$ and $q_3=17$ not dividing $N$, asks $x_1\equiv1$ modulo 8, 3, 5, 7 and the previous $q_j$, and $x_1\equiv t_k$ modulo $q_k$ ($t_2=2$, $t_3=3$), and puts $z_k=x_1/(q_1\cdots q_kN)^2$:
\begin{center}
\begin{tabular}{|c|c|c|l|}
\hline
$k$ & $q_k$ & $x_1\bmod M_k$ & $d_k$\\
\hline
1 & 11 & $2521\bmod9240$ & $-4407871453975949551199$\\
 & & & $\ =-89\cdot4597\cdot10773688394668603$\\
\hline
2 & 13 & $36961\bmod120120$ & $-21268670741331361021169357399$\\
 & & & $\ =-23248001011\cdot914860195131095309$\\
\hline
3 & 17 & $1921921\bmod2042040$ & $-513363857667648563737041085212527519$\\
 & & & $\ =-642907\cdot47621000695841\cdot16767897791344237$\\
\hline
\end{tabular}
\end{center}
Each $d_k$ is negative, $d_k\equiv1\bmod8$ and $\left(\frac{d_k}{3}\right)=\left(\frac{d_k}{5}\right)=\left(\frac{d_k}{7}\right)=1$, so $W(E^{d_k})=1$, which we also confirmed directly with Magma. Further, $\left(\frac{d_k}{q_j}\right)=1$ for $j<k$ and $\left(\frac{d_k}{q_k}\right)=-1$. In particular, $\left(\frac{d_2}{11}\right)=1$, $\left(\frac{d_2}{13}\right)=-1$, and $\left(\frac{d_3}{11}\right)=\left(\frac{d_3}{13}\right)=1$, $\left(\frac{d_3}{17}\right)=-1$. If a non-empty product $\prod_{k\in S}d_k$ were a square, its Kronecker symbol at $q_m$, $m=\min S$, would be $-1$. So no such product is a square, and $L_3=\Q(\sqrt{d_1},\sqrt{d_2},\sqrt{d_3})$ has degree 8 over $\Q$. Since every $d_k$ is $\equiv1\bmod8$ and prime to $210N_E$, every quadratic subfield of $L_3$ is ramified at a prime not dividing $210N_E$, so $E(L_3)_\tors=E(\Q)_\tors$ is trivial by Corollary~\ref{cor-fujita} (see Remark~\ref{rem-factoringfree}). Finally, $W(E)=-1$, $W(E^{d_k})=1$, and the discriminant of $\Q(\sqrt{d_k})$ is $d_k$, which is prime to $N_E$. So Lemma~\ref{lem:multiquadratic-rank} gives $R_{E/L_3}\geq1+3=4$, and
\[R_{E/L_3}\geq R_{E/\Q}+2\cdot3+2^2-1=10\]
if the Parity Conjecture holds for the twists $E^d$, $d\in\mathcal{D}_3$, $d\neq1$.

If instead we ask for root number $-1$, the command
\begin{verbatim}
ds, recs := ConstructRootNumberTwists(E, -1 : howmany := 3);
\end{verbatim}
uses the same classes modulo $M_k$ but takes $x_1>0$, and returns the positive values
\begin{align*}
d_1&=157\cdot3271\cdot7407287562553843\\
d_2&=263\cdot4177\cdot16691764119808305483311\\
d_3&=29\cdot43\cdot354895667836067582927094829938383,
\end{align*}
each with $W(E^{d_k})=-1$. Now every $d\in\mathcal{D}_3$ has $W(E^d)=-1$, and part (3) of Lemma~\ref{lem:multiquadratic-rank} gives $R_{E/L_3}\geq1+2^3-1=8$ under the Parity Conjecture, which for $n=3$ is weaker than the bound 10 above.
\end{example}

We have included a Magma file \texttt{HighRankCurves.m} that contains examples of elliptic curves in Section~\ref{sec-examples}. The following table gives, for each of them, the congruence class of $x_{1,0}$ modulo $M_1=8\,q_1\prod_{p\mid N,\ p\text{ odd}}p$ used in the first step of \texttt{ConstructRootNumberTwists(E, 1)}, where $N=2\cdot3\cdot5\cdot7\prod_{p\mid N_E,\ p>7}p$, $q_1$ is the smallest odd prime not dividing $N$, and $t_1$ is the least quadratic non-residue modulo $q_1$. The modulus $M_1$ itself has between $122$ and $181$ digits, so we do not print it. We checked with Magma that each printed class is $\equiv1$ modulo $8$ and modulo every odd prime dividing $N$, is $\equiv t_1\bmod q_1$, and is smaller than $M_1$.

{\footnotesize
\setlength{\tabcolsep}{4pt}
\begin{longtable}{|l|c|c|c|p{0.62\textwidth}|}
\hline
Curve & Rank $E$ & $q_1$ & $t_1$ & Congruence class of $x_{1,0}\bmod M_1$\\
\hline
\endfirsthead
\hline
Curve & Rank $E$ & $q_1$ & $t_1$ & Congruence class of $x_{1,0}\bmod M_1$ (continued)\\
\hline
\endhead
\hline
\endfoot
\endlastfoot
\#4239 & 32 & 19 & 2 & \texttt{417936778031469211008423692067561218391040323671089446492773\newline 464417872540726825409924423817705139806339508710699447616511\newline 928110993671212051702673195059158353808066765235730699715248\newline 1}\\
\hline
\#3900 & 31 & 23 & 5 & \texttt{151563742734514126176461046353466154640950934466514102903425\newline 769898631318294280915589716900493317201380225846060033973718\newline 282017093504477014054121}\\
\hline
\#302 & 31 & 17 & 3 & \texttt{434785926665697132617222008787064545487384851175023935487161\newline 782959380329205914150078248210725112231590402712384999809986\newline 85490105258461299982095106982893653299817841}\\
\hline
\#398 & 30 & 11 & 2 & \texttt{680555440448938192924961023960728617033424863956231038163621\newline 590338524928590013360686392469336637849329668092417596248552\newline 54634004390641}\\
\hline
\#728 & 29 & 11 & 2 & \texttt{104527125721949699947599772717711850658644801899358488971116\newline 493190021795026452399538456828540694381511209456477728097609\newline 21}\\
\hline
\#11 & 28 & 23 & 5 & \texttt{101364299178488055351612780037113753448489670836616172074417\newline 458891182911926922727971151188396214862793375684996910938198\newline 985885659464046798820721}\\
\hline
\end{longtable}
}

\section{Finding twists of rank greater than $2$}\label{sec-rank3}

Theorem~\ref{thm-main2} produces twists $E^{d_k}$ with one explicit point each. When $W(E^{d_k}/\Q)=1$, the Parity Conjecture then gives $R_{E^{d_k}/\Q} \geq 2$. In this section we explain how, in some cases, one can obtain twists of rank $\geq 2$ unconditionally (with two independent rational points that are explicitly given), and conjecturally of rank $\geq 3$, using a classical construction of Mestre. However, we also explain that this method cannot be extended to all elliptic curves over $\Q$, while our methods above apply to all of them. Mestre's construction was successfully used in the authors' preprint \cite{CostaLozanoRobledo-rank35} to show that there exists an explicit quadratic field $K$ such that the curve $E_{32}$ of Theorem~\ref{thm-E32} has rank $\geq 34$ unconditionally and $\geq 35$ assuming Parity or finiteness of a $p$-primary component of $\Sha$ over the quadratic field $K$.

\subsection{Mestre's family}
Let $E/\Q$ be given by $y^2=x^3+ax+b$ with $a,b\in\Z$. Define
\[g(t)=-ab\,(t^2+1)\,I(t),\qquad I(t)=b^2(t^4+t^2+1)^3+a^3t^4(t^2+1)^2,\]
a polynomial of degree $14$ with $g(-t)=g(t)$ and $g(1/t)=t^{-14}g(t)$.

\begin{theorem}[Mestre \cite{mestre}; see also \cite{rubinsilverberg,stewarttop,gusictadic}]\label{thm-mestre}
Suppose $ab\neq 0$. Then, the quadratic twist $E^{g(t)}\colon y^2=x^3+ag(t)^2x+bg(t)^3$ has rank at least $2$ over $\Q(t)$. Two independent points are
\begin{align*}
P(t)&=\left(-\frac ba\cdot\frac{t^4+t^2+1}{t^2+1}\, g(t),\ \frac{g(t)^2}{a^2(t^2+1)^2}\right),\\
Q(t)&=\left(-\frac ba\cdot\frac{t^4+t^2+1}{t^2(t^2+1)}\, g(t),\ \frac{g(t)^2}{a^2t^3(t^2+1)^2}\right).
\end{align*}
\end{theorem}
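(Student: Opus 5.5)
The plan has three parts: verify that $P(t)$ and $Q(t)$ lie on $E^{g(t)}$, reduce the $Q$ computation to the $P$ computation, and prove independence by specialization.

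\textbf{Step 1: $P(t)$ lies on the curve.} Write $u=t^2+1$ and $v=t^4+t^2+1$, so $P=(x_P,y_P)$ with $x_P=-\frac{b}{a}\frac{v}{u}g$ and $y_P=\frac{g^2}{a^2u^2}$. I will substitute into $x^3+ag^2x+bg^3$ and factor out $g^3$, which gives
\[x_P^3+ag^2x_P+bg^3=g^3\Bigl(-\frac{b^3v^3}{a^3u^3}-\frac{bv}{u}+b\Bigr)=-\frac{bg^3}{a^3u^3}\bigl(b^2v^3+a^3u^2(v-u)\bigr).\]
Since $v-u=t^4$, the bracket is exactly $I(t)$. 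Using $g=-ab\,u\,I$, the right-hand side becomes $\frac{g^3\cdot g}{a^4u^4}=y_P^2$. So the verification is a direct algebraic identity.

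\textbf{Step 2: $Q(t)$ lies on the curve.} The cleanest route is the symmetry $t\mapsto 1/t$. Replacing $t$ by $1/t$ turns $u$ into $u/t^2$, $v$ into $v/t^4$, and $g$ into $g/t^{14}$. Hence $P(1/t)$ lies on $E^{g(1/t)}$, and $E^{g(1/t)}$ is isomorphic to $E^{g(t)}$ over $\Q(t)$ because $g(1/t)=t^{-14}g(t)$ differs from $g(t)$ by the square factor $t^{-14}$. Under the change of variables $(x,y)\mapsto(t^{14}x,\,t^{21}y)$ on the model $y^2=x^3+ag^2x+bg^3$, I expect $P(1/t)$ to map exactly to $Q(t)$; this is a routine exponent check. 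Alternatively, one can redo the substitution of Step 1 with $x_Q=x_P/t^2$ and use $t^{-6}v^3+\dots$ in the same way.

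\textbf{Step 3: independence.} This is the main obstacle. The plan is to specialize $t=t_0$ and then invoke the specialization injectivity of Silverman's theorem for all but finitely many $t_0$. It is actually enough to exhibit a single $t_0$ such that the reduced points are independent, because any dependence $mP+nQ=0$ over $\Q(t)$ persists under specialization at good $t_0$.

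In practice I would try the following, in order:
\begin{enumerate}
\item Show that $P$ and $Q$ are non-torsion by showing they are not constant points. They have genuinely varying $x$-coordinates, and a nonconstant $j$ rules out torsion except in finitely many cases.
\item Rule out $P=\pm Q+T$ and multiples more generally by computing the canonical height pairing matrix over $\Q(t)$ via Shioda's theory. Here the twist $E^{g}$ is an elliptic surface, and I would check that the $2\times 2$ height matrix is nonsingular.
\end{enumerate}
A simpler alternative to item 2 is to exploit the automorphism $\sigma:t\mapsto -t$. The degree-14 even polynomial $g$ is fixed by $\sigma$, and so is $P$, while $Q$ maps to $(x_Q,-y_Q)=-Q$ because $t^3$ is odd. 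Thus $P$ lies in the $+1$-eigenspace and $Q$ in the $-1$-eigenspace of $\sigma$ acting on $E^g(\Q(t))\otimes\Q$. Two non-torsion points in distinct eigenspaces are automatically independent. This reduces Step 3 to proving that $P$ and $Q$ each have infinite order. For that, I would check that $y_P\neq 0$ and $y_Q\neq 0$, and that some specialization yields a point that is non-torsion on the specialized curve, using the Nagell--Lutz criterion or a height computation at a single value of $t_0$.
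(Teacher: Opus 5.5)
The paper gives no proof of this statement (it is quoted from Mestre), so your argument has to stand on its own. Steps 1 and 2 check out: the bracket is exactly $I(t)$ because $v-u=t^4$, and $(x,y)\mapsto(t^{14}x,t^{21}y)$ does carry $P(1/t)$ to $Q(t)$. Your eigenspace argument is also correct, and it is the cleanest route to independence. For $\sigma\colon t\mapsto -t$, the curve $E^{g}$ is defined over $\Q(t^2)$, $\sigma(P)=P$ and $\sigma(Q)=-Q$. So a relation $mP+nQ\in E^{g}(\Q(t))_{\tors}$ makes $2mP$ and $2nQ$ torsion.

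What is missing is a valid proof that $P$ and $Q$ have infinite order.

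\textbf{Why your justifications fail.} There is no ``nonconstant $j$'' to invoke: $j(E^{g(t)})=j(E)$ is constant, since this family is isotrivial. Varying $x$-coordinates on $E^{g}$ prove nothing either, since the $2$-torsion points $(ge,0)$ with $e^3+ae+b=0$ also have varying $x$-coordinate. Your fallback, specializing at one $t_0$ and using Nagell--Lutz or a height computation, only handles a fixed pair $(a,b)$. It does not prove the statement for all $a,b$ with $ab\neq0$.

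\textbf{The missing idea is to untwist.} Over $F=\Q(t,\sqrt{g})$, the map $(x,y)\mapsto(x/g,\,y/g^{3/2})$ is an isomorphism from $E^{g}$ to the constant curve $E$. Every torsion point of $E(F)$ lies in some $E[m](\overline{\Q})$, so it has constant coordinates. But
\[
\frac{x_P}{g}=-\frac ba\cdot\frac{t^4+t^2+1}{t^2+1},\qquad \frac{x_Q}{g}=-\frac ba\cdot\frac{t^4+t^2+1}{t^2(t^2+1)}
\]
are non-constant, which uses $ab\neq0$. Hence neither point is torsion.

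Equivalently: $I(\pm i)=b^2\neq0$, so $g$ has simple zeros at $t=\pm i$ and is not a square in $\overline{\Q}(t)$. This forces $E^{g}(\Q(t))_{\tors}=E^{g}(\Q(t))[2]$, and then your observation $y_P,y_Q\neq0$ finishes the job. With this in place, the appeals to Silverman's specialization theorem and Shioda's height pairing are unnecessary.
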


By Silverman's specialization theorem \cite{silvermanspecialization}, $P(t_0)$ and $Q(t_0)$ are independent in $E^{g(t_0)}(\Q)$ for all but finitely many $t_0\in\Q$.

\begin{corollary}\label{cor-mestre-rank3}
Let $E/\Q$ be an elliptic curve that satisfies the conditions of Theorem \ref{thm-mestre}. Let $t_0\in\Q$ with $g(t_0)\neq0$ and $P(t_0)$, $Q(t_0)$ independent, and let $K=\Q(\sqrt{g(t_0)})$. Then $R_{E/K}\geq R_{E/\Q}+2$. If $W(E^{g(t_0)})=-1$, then the $p^\infty$-Selmer corank of $E^{g(t_0)}/\Q$ is odd and $\geq 3$ for every prime $p$. Therefore, assuming Conjecture \ref{conj-parity}, or $\Sha(E^{g(t_0)}/\Q)[p^\infty]$ is finite for some $p$, then $R_{E^{g(t_0)}/\Q}\geq 3$ and $R_{E/K}\geq R_{E/\Q}+3$.
\end{corollary}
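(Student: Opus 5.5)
The argument has three parts: the rank statement over $K$ via Proposition~\ref{prop-kida}, an upgrade of the Selmer corank from Theorem~\ref{thm-pparity}, and the descent back to the Mordell--Weil rank.

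\textbf{Rank over $K$.} Set $d=g(t_0)$, a nonzero rational number. First we must check that $K=\Q(\sqrt{d})$ is genuinely quadratic. If $d$ were a square, then $E^{d}\cong E$ over $\Q$, and the statement degenerates. We therefore take $K$ quadratic; this is implicit in the hypothesis, and otherwise $R_{E/K}=R_{E/\Q}$ and the statement is vacuous or needs reinterpretation. Proposition~\ref{prop-kida} with $n=1$ gives
\[R_{E/K}=R_{E/\Q}+R_{E^{d}/\Q}.\]
By Theorem~\ref{thm-mestre}, $P(t_0)$ and $Q(t_0)$ are rational points on $E^{d}\colon y^2=x^3+ad^2x+bd^3$. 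They are independent by hypothesis, so $R_{E^{d}/\Q}\geq 2$. This gives $R_{E/K}\geq R_{E/\Q}+2$.

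\textbf{Selmer corank.} Suppose now $W(E^{d})=-1$ and fix a prime $p$. Theorem~\ref{thm-pparity} gives $(-1)^{s_p(E^{d}/\Q)}=-1$, so $s_p(E^{d}/\Q)$ is odd. Equation~\eqref{eq-selmer-rank} gives $s_p(E^{d}/\Q)\geq R_{E^{d}/\Q}\geq 2$. An odd integer that is at least $2$ is at least $3$.

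\textbf{Mordell--Weil rank.} Under Conjecture~\ref{conj-parity}, $R_{E^{d}/\Q}$ is odd and at least $2$, hence at least $3$. Alternatively, if $\Sha(E^{d}/\Q)[p^\infty]$ is finite for some $p$, then Corollary~\ref{cor-pparity-sha} gives $R_{E^{d}/\Q}=s_p(E^{d}/\Q)\geq 3$. Substituting into the Kida identity yields $R_{E/K}\geq R_{E/\Q}+3$.

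The only subtle point is the identification of $E^{g(t_0)}$ with the twist by the square class of $g(t_0)$. The root number and Selmer rank depend only on the $\Q$-isomorphism class, so this identification is harmless. No step is a real obstacle; the argument is a direct assembly of Theorem~\ref{thm-mestre}, Proposition~\ref{prop-kida}, and Theorem~\ref{thm-pparity}.
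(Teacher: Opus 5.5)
Your proposal is correct and is the intended argument. The paper states this corollary without a written proof, as a direct assembly of Proposition~\ref{prop-kida}, Theorem~\ref{thm-mestre}, Theorem~\ref{thm-pparity} and Corollary~\ref{cor-pparity-sha}, exactly as you do; you are also right that the statement tacitly requires $g(t_0)\notin(\Q^\times)^2$, though in that case the claims about $K$ would actually be false (since $K=\Q$ and $R_{E/K}=R_{E/\Q}$), not vacuous, so this should be stated as an added hypothesis.
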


Ideally, we would like to use Corollary \ref{cor-mestre-rank3} to construct a sequence of square-free integers $d_k$ to be used as in Lemma \ref{lem:multiquadratic-rank}, but several difficulties arise. For instance, Corollary \ref{cor-mestre-rank3} does not work for every elliptic curve over $\Q.$

\begin{example}[$j=0$ and $j=1728$]\label{ex-mestre-j0}
If $a=0$ (that is, $j=0$) or $b=0$ ($j=1728$), then $g(t)\equiv 0$ and Theorem~\ref{thm-mestre} does not apply. This is the case for the curve $E\colon y^2=x^3-2$ of Examples~\ref{ex-x3minus2-intro} and~\ref{ex-x3minus2}, which has $j=0$; similarly for any curve $y^2=x^3+Ax$, which has $j=1728$. For these curves, Theorem~\ref{thm-main2} still applies and produces explicit twists of positive rank and prescribed root number.
\end{example}

Another possible situation is when the Mestre twist always gives $W(E^{g(t)}/\Q)=1$ for every $t\in\Q$ with $g(t)\neq 0$. For instance, one can show this to be the case for the curve \texttt{162c1}. Then we cannot guarantee even under Conjecture \ref{parity} that any of the quadratic twists produced by the Mestre construction have rank $\geq 3$. As a future direction, the authors are currently looking into sufficient conditions on an elliptic curve $E/\Q$ to apply Mestre's construction and conclude certain conditional rank increases in multi-quadratic extensions by multiples of, conjecturally, at least $3$.


\bibliography{bibliography}
\bibliographystyle{plain}


\end{document}